\newcommand{\D}{\mathcal{D}}
\newcommand{\B}{\mathfrak{B}}
\renewcommand{\div}{\operatorname{div}}
\newcommand{\bE}{\mathbb{E}}
\def\dx{\,{\rm d}x}
\def\dy{\,{\rm d}y}
\def\ds{\,{\rm d}s}
\def\dt{\,{\rm d}t}
\def\I{\mathcal{I}}
\def\Ih{\hat{\mathcal{I}}}
\DeclareMathOperator*{\argmin}{arg\,min}
\definecolor{sxcol}{rgb}{0, 0, 1}
\definecolor{lzcol}{rgb}{1, 0, 0}
\definecolor{xlcol}{rgb}{0, 1, 0}
\newcommand{\sx}[1]{{\color{sxcol} #1}}
\begin{document}
\title{Generalized Rough Polyharmonic Splines for Multiscale PDEs with Rough Coefficients}

 \author[Liu X. e t.~al.]{Xinliang Liu\affil{1}, Lei Zhang\affil{1}\comma\corrauth 
 and
       Shengxin Zhu\affil{2}\comma\affil{3}$^*$
       }
 \address{\affilnum{1}\ Institute of Natural Sciences, School of Mathematical Sciences,
 	and MOE-LSC, Shanghai Jiao Tong University. \\
           \affilnum{2}\    Research Center for mathematics, Beijing Normal University,  Zhuhai 519087, \\
           \affilnum{3}\    Division of Science and Technology, BNU-HKBU United International College, Zhuhai 519087
           }
 \emails{{\tt  lzhang2012@sjtu.edu.cn} (L.~Zhang), {\tt liuxinliang@sjtu.edu.cn} (X.~Liu),
          {\tt Shengxin.Zhu@bnu.edu.cn}, {\tt shengxinzhu@uic.edu.cn} (S.~Zhu)}


\date{\today}



\begin{abstract}
In this paper, we demonstrate the construction of generalized Rough Polyhamronic Splines (GRPS) within the Bayesian framework, in particular, for multiscale PDEs with rough coefficients. The optimal coarse basis can be derived automatically by the randomization of the original PDEs with a proper prior distribution and the conditional expectation given partial information on edge or derivative measurements. We prove the (quasi)-optimal localization and approximation properties of the obtained bases, and justify the theoretical results with numerical experiments. 

\end{abstract}


\keywords{generalized Rough Polyharmonic Splines, multiscale elliptic equation, Bayesian numerical homogenization, edge measurement, derivative measurement. }
\maketitle
\section{Introduction}
\def\L{\mathcal{L}}
\def\B{\mathcal{B}}
\def\H{\mathcal{H}}

Problems with a wide range of coupled temporal and spatial scales are ubiquitous in many phenomena and processes of materials science and biology. Multiscale modeling and simulation is essential in underpinning the discovery and synthesis of new materials and chemicals with novel functionalities in key areas such as energy, information technology and bio-medicine. 

There has been many existing work concerning the design of novel numerical methods for multiscale problems and the mathematics to foresee and assess their performance in engineering and scientific applications, such as homogenization \cite{papanicolau1978asymptotic,jikov2012homogenization}, numerical homogenization \cite{dur91,ab05,weh02}, heterogeneous multi-scale methods \cite{ee03,abdulle2014analysis, ming2005analysis,li2012efficient}, multi-scale network approximations \cite{berlyand2013introduction}, multi-scale finite element methods \cite{Arbogast_two_scale_04, eh09, Review,chen2015mixed},
variational multi-scale methods \cite{hughes98, bazilevs2007variational}, flux norm homogenization \cite{berlyand2010flux,owhadi2008homogenization}, rough polyharmonic splines (RPS) \cite{OwhZhaBer:2014}, generalized multi-scale finite element methods \cite{egh12, chung2014adaptiveDG,  chung2015residual}, localized orthogonal decomposition \cite{MalPet:2014,Henning2014,Henning2015,Peterseim2017a}, etc.  Fundamental questions for numerical homogenization are: how to approximate the high dimensional solution space by a low dimensional approximation space with optimal error control, and furthermore, how to construct the approximation space efficiently, for example, whether its basis can be localized on a coarse patch. Surprisingly, those questions have deep connections with Bayesian inference, kernel learning and probabilistic numerics \cite{Owhadi2015,OwhadiMultigrid:2017,owhadi2020kernel,owhadi2019kernel}.

In this paper, we generalize the so-called \textit{Rough Polyharmonic Splines} (RPS) \cite{OwhZhaBer:2014} within the Bayesian framework \cite{Owhadi2015} for the following integral-differential equation
\begin{equation}
\begin{cases}
\mathcal{L} u &= g, \quad \text{on } \Omega, \\
\mathcal{B} u &=0, \quad \text{on } \partial \Omega.
\end{cases}
\label{eqn:ellp}
\end{equation}
where $\L$ and $\B$ are integro-differential operators on $\Omega$ and $\partial\Omega$, such that 
$(\L, \B): \H(\Omega) \to \H_{\L}(\Omega) \times \H_{\B}(\partial \Omega)$, where $\H(\Omega), \H_{\L}(\Omega)\text{ and }\H_{\B}(\partial \Omega)$ are Hilbert spaces of generalized functions on $\Omega$ and $\partial \Omega$, such that $\H(\Omega) \subset L^2(\Omega)\subset \H_{\L}(\Omega)$. 

A prototypical example is the second order divergence form elliptic equation with rough coefficients, such that $\mathcal{L}=- \div (\kappa(x) \nabla \cdot) $, $\B = \mathrm{Id}$, and $\Omega$ is a simply connected domain with piecewise smooth boundary $\partial\Omega$. The \textit{rough coefficient}, $\kappa(x) \in L^{\infty}(\Omega)$, represents multiscale media with high contrast and fast oscillations. We only require $\kappa$ to be uniformly elliptic on $\Omega$, i.e., that $\kappa$ is uniformly bounded from above and below by two strictly positive constants, denoted by $\kappa_{min}$, $\kappa_{max}$. 
For this example, we have  $\H(\Omega)=H^1_0(\Omega)$, and $\H_{\L}(\Omega)=H^{-1}(\Omega)$.

It is well-known that for an arbitrary $\kappa$, solving the elliptic equation with linear or polynomial finite element methods can be arbitrarily slow \cite{BO00}. To tackle with such a challenge, last decades has witnessed the fast development of multiscale finite element methods \cite{HouWu:1997, EfeHou:2009b} and numerical homogenization approaches \cite{OwhZha:2007, MalPet:2014, OwhZhaBer:2014}. One essential component of these methods is the construction of a proper coarse space with desired approximation and localization properties. The Bayesian homogenization approach \cite{Owhadi2015} provides a unified framework for such constructions \cite{OwhZhaBer:2014}. 

Under the Bayesian framework, the \textit{generalized Rough Polyharmonic Splines} (GRPS) space can be identified by the choice of random noise and measurement function. Point and volume measurements have been used in \cite{OwhZhaBer:2014} and \cite{OwhadiMultigrid:2017}, respectively. 
In this paper, we construct two new GRPS spaces based on the edge measurements or derivative measurements, and provide rigorous proof of their approximation and localization properties. It is sometimes natural to use edge based measurements due to the presence of elongated structures such as cracks and channels in heterogeneous media. The derivative based GRPS can be seen as a higher order method. 

We note that our method is different from the so-called edge multiscale finite elements in \cite{fu2019edge}, which forms the multiscale finite element space by solving local Steklov eigenvalue problems and a local harmonic function with constant flux (also appears in the mixed multiscale finite element method, for example \cite{chen2003mixed}). 

The paper is organized as follows: in Section \ref{sec:formulation}, we first introduce the Bayesian homogenization framework and the variational formulation of numerical homogenization (coarse) basis, then we present the details for the construction of such basis. In Section \ref{sec:analysis}, we provide the rigorous error analysis of the corresponding numerical homogenization method. Numerical examples are presented in Section \ref{sec:numerics} to validate the method. We conclude the paper in Section \ref{sec:conclusion}.

\paragraph{Notations} The symbol $C$ (or $c$) denotes generic positive constant that may change from one line of an estimate to the next. The dependence of $C$ will be clear from the context or stated explicitly. We use standard notations $L^2(\Omega)$, $H^1(\Omega)$ for Lebesgue and Sobolev spaces, and $H_{0}^{1}(\Omega):=\left\{u \in H^{1}(\Omega): u=0 \text { on } \partial\Omega\right\}$. For any measurable subset $\omega\subset\Omega$, the $d$ or $(d-1)$ dimensional Lebesgue measure of $\omega $ is denoted by $|\omega|$ and the $L^2$ norm is denoted by $\|\cdot\|_{L^2(\omega)}$. We denote $\#$  the cardinality of a set. 

\section{Formulation}
\label{sec:formulation}

\subsection{Bayesian homogenization framework}
\label{sec:formulation:Bayesian}

In the Bayesian homogenization framework \cite{Owhadi2015}, the coarse space can be identified by a Bayesian inference problem (in particular, Gaussian process regression), through the randomization of the original deterministic problem \eqref{eqn:ellp},
\begin{equation}
\begin{cases}
\mathcal{L} v(x) = \zeta(x), \quad &\text{on } \Omega, \\
\mathcal{B}v =0, \quad &\text{on } \partial \Omega.
\end{cases}
\label{eqn:ranellp}
\end{equation}
where $\zeta(x)$ is a centered Gaussian process on $\Omega$ with covariance $\Lambda(x,y)$. The solution $v(x)$ is also a centered Gaussian process on $\Omega$ with covariance 
\begin{equation}
   \Gamma(x,y): = \bE[v(x)v(y)]= \int_{\Omega\times\Omega}G(x,z)\Lambda(z,z')G(y,z')dzdz', 
   \label{eqn:cov}
\end{equation}
where $G(x,y)$ is the Green's function such that $\mathcal{L}G(x,y) = \delta(x-y)$ on $\Omega$, $\mathcal{B}G = 0$ on $  \partial\Omega$.


Given an index set $\I$ and a set of linearly independent \textit{measurement functions} $\Phi:=\{\phi_i(x)\}_{i\in \I}$ , such that  $\int_{\Omega\times\Omega} \phi_i(x) \Gamma(x,y) \phi_j(x)\dx $ are well-defined, we define the measurements $M:=(m_1, \dots, m_{N})$, where $m_i:=\int_{\Omega} v(x)\phi_i(x)\dx$ for $i\in \I$ and $N=\#\I$. For example, when $\phi_i(x)=\delta(x-x_i)$, $m_i$ is the point value at $x_i$ for any continuous $v$. Note that $M$ is a centered Gaussian vector with covariance matrix $\Theta$, where 
$$
\Theta_{i,j} : = \int_{\Omega\times\Omega} \phi_i(x) \Gamma(x,y) \phi_j(y) \dx \dy.$$

The optimal approximation space can be identified through the conditional expectation of $v(x)$ with respect to measurements $M$,
\begin{equation}
\mathbb{E}[v|M] = \sum_{i\in \I} m_i \psi_i(x),
\end{equation}
where $\psi_i$ has the following explicit representation formula from the conditional expectation of Gaussian process,
\begin{equation}
\psi_i(x): = \sum_{j\in \I} \Theta_{i,j}^{-1}\int_\Omega \Gamma(x,y) \phi_j(y) \dy.
\label{eqn:psi_baye}
\end{equation}

$ \{\psi_i\}_{i\in\I}$ can be regarded as a set of posterior basis with respect to the noise $\zeta$ and the measurement functions $\Phi$. $\Psi:=\mathrm{span} \{\psi_i\}_{i\in\I}$ can be used as a coarse space to approximate the solution of deterministic problem \eqref{eqn:ellp}. 
We note that the above formulation can be seen as a prototypical example for the emerging field of probabilistic numerics, and we refer interested readers to \cite{Owhadi2015,OwhadiMultigrid:2017,owhadi2019operator,cockayne2019bayesian} for more details. 

It is difficult to use \eqref{eqn:psi_baye} for numerical computation since it involves convolutions over the whole domain $\Omega$. We will give a variational formulation for $\psi_i$ in the next Section \ref{sec:formulation:variation}.
Before proceeding, we first discuss the choice of the noise $\zeta$ and the measurement functions $\Phi$.


\paragraph{Choice of the noise $\zeta$}
For the centered Gaussian field $\zeta$, it reduces to the choice of the covariance function $\Lambda(x,y)$ , which in turn determines the regularity of the solution space. There are two natural possibilities: 
\begin{enumerate}
\item (white noise) Taking $\zeta(x)$ as the white noise, i.e. $\Lambda(x,y)= \delta(x,y)$. This is the choice for the RPS formulation in \cite{OwhZhaBer:2014,Owhadi2015}.
\item ($\L$ noise) Taking the covariance operator of $\zeta$ as $\L$, namely, for any $g$ such that $\int g\L g\dx  <\infty$, $\int g(x)\zeta(x) \dx$ is a Gaussian random variable with mean 0 and  variance $\int g\L. g\dx $. This is the choice for the Gamblet formulation in \cite{OwhadiMultigrid:2017}. 
\end{enumerate}

\paragraph{Choice of $\Phi$} 
The RPS formulation uses point value measurement functions $\phi_i(x) = \delta(x-x_i)$, while the Gamblet formulation takes scaled volume characteristic functions as measurement functions, and volume averages as measurements. In this paper, we take edge averages or volume averaged first order derivatives as measurements to construct the approximation space $\Psi$. We postpone the specification of the measurements $\Phi$ after we set up the discretization in Section \ref{sec:formulation:numericalmethod}.

In the following, we refer to the basis with white noise as RPS basis, and the basis with $\L$ noise as GRPS basis. For instance, we name the RPS basis with point measurement as RPS-P basis (or in short, RPS basis), GRPS basis with volume measurements as GRPS-V basis, and so on. 

\subsection{Variational formulation}
\label{sec:formulation:variation}

We introduce the solution space to the original problem \eqref{eqn:ellp} as, 
\begin{equation}
V:=\{ v | \mathcal{L} v \in L^2(\Omega), \mathcal{B} v =0 \text{ on } \partial \Omega \}.
\end{equation}

We define the bilinear form $a:V\times V\to \mathbb{R}$ as
\begin{equation}
    a ( u, v ) = \left\{
    \begin{array}{cc}
        \int_\Omega (\L u) (\L v) \dx, & \text{$\zeta$ is white noise },\\
        \int_\Omega u\L v \dx, & \text{$\zeta$ is $\L$ noise},
    \end{array}
    \right.
\end{equation}
and the norm $\|\cdot\|:=\big(a(\cdot,\cdot)\big)^\frac12$. $[u,v]:= \int_{\Omega} u v \dx$ is the scalar product. 

Instead of using the Bayesian representation formula \eqref{eqn:psi_baye}, we propose the following variational formulation to compute the basis. For any $i\in \I$,
\begin{equation}
\begin{cases}
\psi_i =\argmin\limits_{v\in V} a(v,v) \\
s.t.\ [v, \phi_j] =\delta_{i,j},\ \forall j\in \I.
\end{cases}
\label{eqn:psi}
\end{equation}
Then $\Psi:=\operatorname{span}\{\psi_i\}_{i\in \I}$ is the GRPS space.
The well-posedness of \eqref{eqn:psi} and the variational property of the derived basis are shown in the following proposition.

\begin{proposition}\cite[Prop 4.2]{Owhadi2015}
	The constrained minimization problem \eqref{eqn:psi} is strictly convex and admits a unique minimizer $\psi_i\in V$, which also satisfies the Bayesian formula \eqref{eqn:psi_baye}. Furthermore, $\psi_i$ fulfils the variational property in the sense that, for any $v$ such that $[v, \phi_j] =0,\ \forall j\in \I$, we have $a(\psi_i,v)=0$.
	\label{prp:var}
\end{proposition}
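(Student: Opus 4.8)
The plan is to treat \eqref{eqn:psi} as the minimization of a strictly convex quadratic functional over a closed affine subspace of the Hilbert space $(V,a(\cdot,\cdot))$, and then to identify the resulting minimizer with the Bayesian basis \eqref{eqn:psi_baye} by recognizing $a$ as the reproducing-kernel (Cameron--Martin) inner product associated with the covariance $\Gamma$. First I would verify that in both noise cases $a(\cdot,\cdot)$ is a symmetric, positive definite bilinear form, so that $\|\cdot\|=a(\cdot,\cdot)^{1/2}$ is a genuine norm, $(V,a)$ is a Hilbert space, and $v\mapsto a(v,v)$ is strictly convex. The feasible set $C_i:=\{v\in V:[v,\phi_j]=\delta_{i,j},\ \forall j\in\I\}$ is an affine subspace; since the $\{\phi_j\}_{j\in\I}$ are linearly independent and each functional $v\mapsto[v,\phi_j]$ is bounded on $(V,a)$, the map $v\mapsto([v,\phi_j])_{j\in\I}$ is surjective onto $\mathbb{R}^N$, so $C_i$ is nonempty, and it is closed by continuity of these functionals. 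Minimizing a strictly convex, coercive quadratic over a nonempty closed convex set yields a unique minimizer $\psi_i\in V$, which establishes existence and uniqueness.

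For the variational (Galerkin) property I would invoke first-order optimality. If $v\in V$ satisfies $[v,\phi_j]=0$ for all $j\in\I$, then $\psi_i+tv\in C_i$ for every $t\in\mathbb{R}$, so the scalar function $t\mapsto a(\psi_i+tv,\psi_i+tv)$ is minimized at $t=0$; differentiating and using the symmetry of $a$ gives $a(\psi_i,v)=0$, which is precisely the asserted orthogonality.

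Finally, to match \eqref{eqn:psi} with the Bayesian formula \eqref{eqn:psi_baye}, write $w_j(x):=\int_\Omega\Gamma(x,y)\phi_j(y)\dy$, so the candidate is $\tilde\psi_i:=\sum_{j\in\I}\Theta_{i,j}^{-1}w_j$. The crucial computation is the reproducing identity $a(w_j,v)=[v,\phi_j]$ for all $v\in V$: formally, both definitions of $a$ amount to $a(u,v)=[u,\Gamma^{-1}v]$ once one substitutes $\Gamma=\L^{-1}\Lambda\L^{-*}$ from \eqref{eqn:cov} (with $\Lambda$ the identity for white noise and $\Lambda=\L$ for $\L$-noise), whence $a(w_j,v)=[\Gamma^{-1}w_j,v]=[\phi_j,v]$ by self-adjointness of $\Gamma$ and $\Gamma^{-1}w_j=\phi_j$. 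Granting this, I would check that $\tilde\psi_i$ satisfies the constraints, namely $[\tilde\psi_i,\phi_k]=\sum_j\Theta_{i,j}^{-1}[w_j,\phi_k]=\sum_j\Theta_{i,j}^{-1}\Theta_{j,k}=\delta_{i,k}$, using the definition of $\Theta$ and its symmetry; and that it satisfies the orthogonality above, since for $[v,\phi_j]=0$ one has $a(\tilde\psi_i,v)=\sum_j\Theta_{i,j}^{-1}a(w_j,v)=\sum_j\Theta_{i,j}^{-1}[v,\phi_j]=0$. By the uniqueness established above, $\tilde\psi_i=\psi_i$, and the Bayesian formula is recovered.

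I expect the main obstacle to be making the reproducing identity $a(w_j,v)=[v,\phi_j]$ rigorous, since it involves the inverse of $\Gamma$, the distributional Green's function, and---in the white-noise case---a field $\zeta$ that is not a function. Justifying the formal adjoint manipulations and integration by parts (together with the boundary conditions encoded in $V$ that make $\L$ self-adjoint and $\L^{-1}$ well-defined), the invertibility of $\Theta$, and the boundedness of the measurement functionals on $(V,a)$, is where the real work lies; the convexity and optimality arguments are routine once this functional-analytic setting is pinned down.
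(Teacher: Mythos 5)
The paper gives no proof of this proposition---it is quoted directly from \cite[Prop 4.2]{Owhadi2015}---so there is nothing internal to compare against; your argument is the standard one underlying that reference (unique minimization of a coercive quadratic over a nonempty closed affine subspace of $(V,a)$, first-order optimality for the orthogonality, and identification with \eqref{eqn:psi_baye} via the reproducing identity $a(w_j,v)=[v,\phi_j]$ for $w_j=\int_\Omega\Gamma(\cdot,y)\phi_j(y)\dy$), and it is correct in outline. The caveats you flag at the end---invertibility of $\Theta$ (equivalently, linear independence of the measurement functionals \emph{as functionals on $V$}, which is what actually gives surjectivity of the constraint map, not linear independence of the $\phi_j$ alone), boundedness of $v\mapsto[v,\phi_j]$ in the $a$-norm, and the rigorous meaning of $\Gamma^{-1}$---are precisely the hypotheses under which the cited result is established, so acknowledging them rather than proving them is consistent with the level of rigor the paper itself adopts here.
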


\begin{remark}\label{prop:deriv}
The constrained minimization problems \eqref{eqn:psi} is equivalent to the following saddle point problem \cite{Owhadi2015}, namely,  finding  $\psi_i \in V$ and $\lambda \in \Phi $ such that
\begin{equation}
\begin{cases}
a(\psi_i,v)+[ \lambda,  v ] = 0, \, \forall v \in V,\\
[\mu, \psi_i] =f_i(\mu), \, \forall \mu \in \Phi,
\end{cases}
\end{equation}
where  $f_i$ is a linear functional on $\Phi$ such that $f_i(\phi_j) =\delta_{i,j},\, \forall j\in\I$.
\end{remark}


\subsection{Numerical Method}
\label{sec:formulation:numericalmethod}

In this section, we present the discretization of \eqref{eqn:ellp} using finite element methods. We focus on the second order elliptic operator $\mathcal{L}=- \div (\kappa(x) \nabla \cdot)$ with a rough coefficient $\kappa(x)$, and the Dirichlet boundary condition such that $\mathcal{B} = Id$. 

Let $\Omega\subset\mathbb{R}^{d}$ ($d\geq 2$) be an open, bounded, and connected polyhedral domain with a Lipschitz boundary $\partial \Omega$. Let the coefficient $\kappa(x)\in (L^\infty)^{d\times d}$, $0<\kappa_{min}:= \inf_{x\in\Omega}\lambda_{\min} (\kappa(x))$ and $ \sup_{x\in\Omega} \lambda_{\max} (\kappa(x))=:\kappa_{max} < \infty$. The variational problem corresponding to \eqref{eqn:ellp} is 
\begin{equation}
a(u,v)=[g,v],\  \forall v\in H_0^1(\Omega),
\label{eqn:varellp}
\end{equation} 
where $a ( u, v ):= \int_{\Omega} u \mathcal{L} v \dx =\int_{\Omega}\kappa(x)\nabla u \cdot \nabla v \dx$.

\def \TH{\mathcal{T}_{H}}
\def \Th{\mathcal{T}_{h}}
\def \EH{\mathcal{E}_{H}}
\def \NH{\mathcal{N}_{H}}

Let $\TH$ be a coarse simplicial subdivision of $\Omega$, where $H:=\max_{\tau \in\TH}H_{\tau}$ is the coarse mesh size, and $H_{\tau} := \mathrm{diam}(\tau)$. We assume that $\TH$ is shape regular in the sense that $\max_{\tau\in\TH} \frac{H_{\tau}}{\rho_{\tau}} \leq \gamma$, for a positive constant $\gamma>0$, where $\rho_{\tau}$ is the radius of the inscribed circle in $\tau$. We denote $\NH$ and $\EH$ as the set of all vertices and $d-1$ dimensional faces(or edges) in $\TH$, respectively. A fine mesh $\Th$ with mesh size $h = 2^{-J}H$ can be obtained by uniformly subdividing $\TH$ $J$ times. We refine ${\mathcal{T}_H}$ $J$ times to obtain the fine mesh $\mathcal{T}_h$, namely, $h = 2^{-J} H$. See Figure \ref{fig:mesh} for an illustration of the coarse and fine mesh over a square domain. The finite element space $V_h$ contains continuous piecewise linear functions with respect to $\Th$ which vanish at the boundary $\partial \Omega$. 

\begin{figure}[H]
	\centering
	\subfigure[Coarse mesh, $N_c=2$]{
		\includegraphics[width=0.3\textwidth]{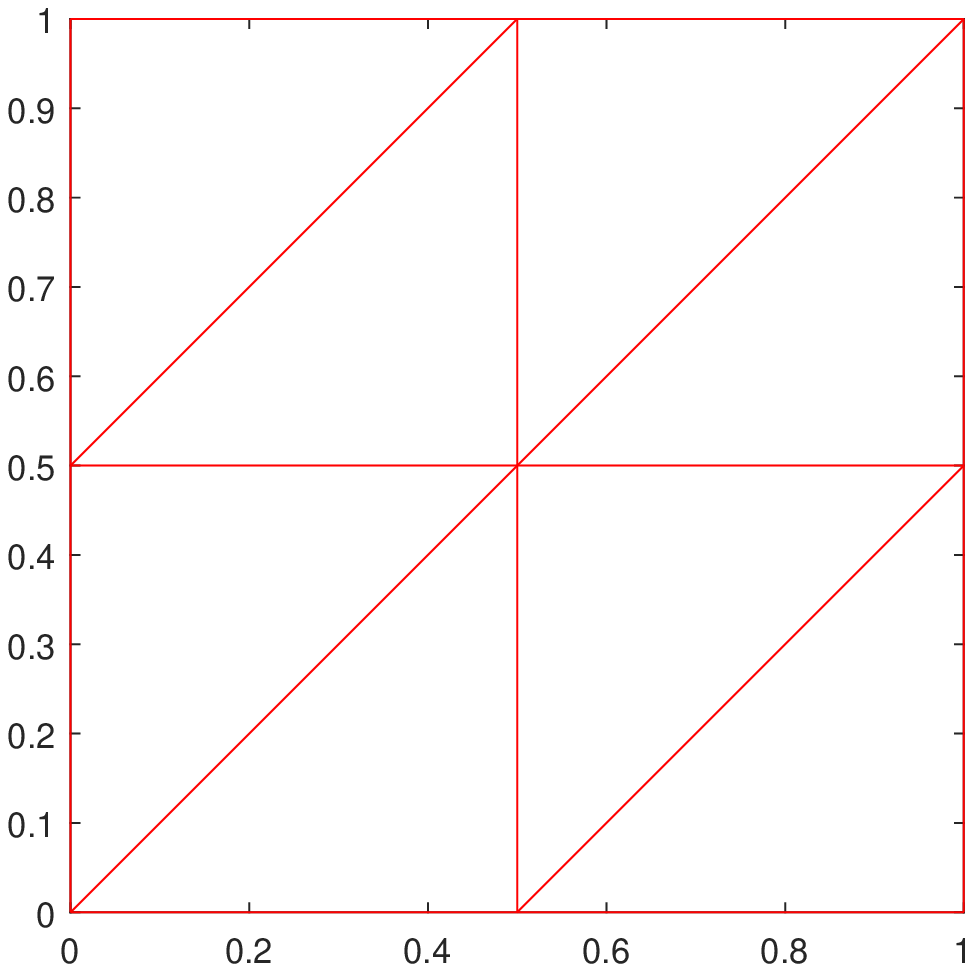}}
		\qquad
	\subfigure[Fine mesh, $N_c=2$,$J=2$]{
		\includegraphics[width=0.3\textwidth]{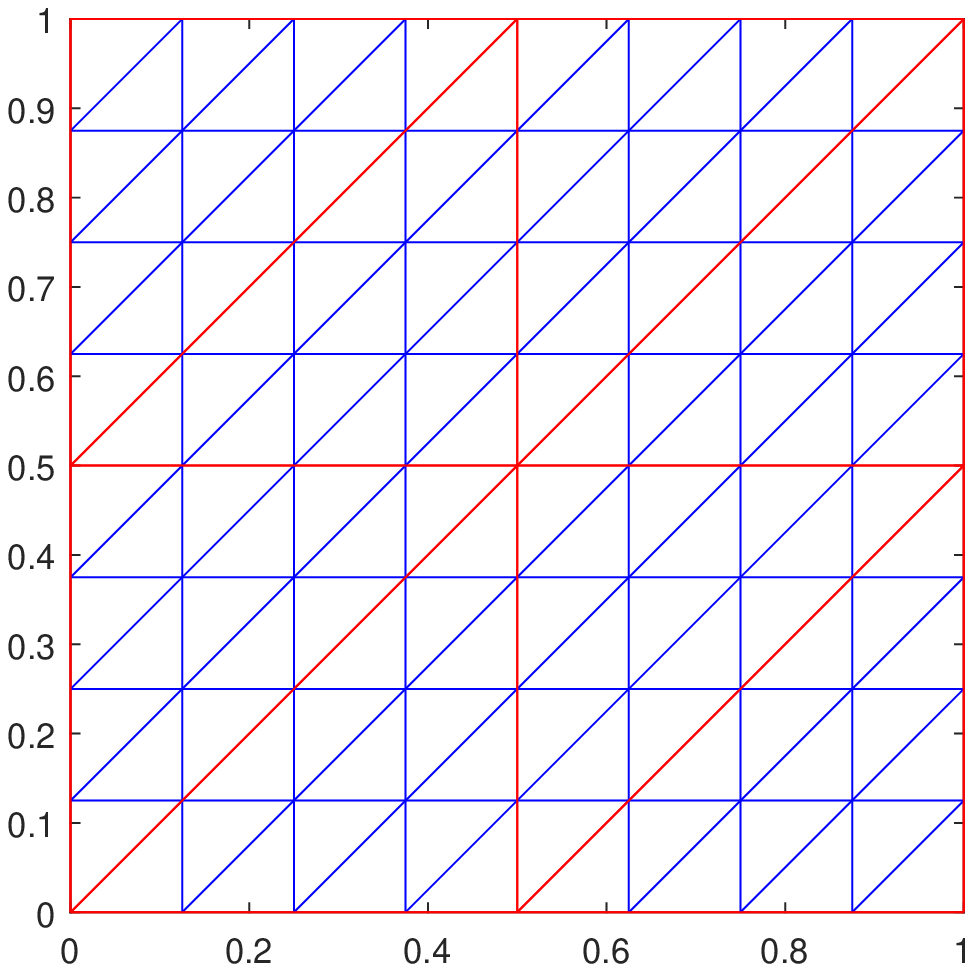}}
	\centering

	\caption{Coarse and fine meshes of the unit square: The regular coarse mesh $\mathcal{T}_H$ is obtained by first subdividing $\Omega$ uniformly into $N_c\times N_c$ squares, then partitioning each square into two triangles along the $(1,1)$ direction. We can further refine the coarse mesh uniformly by dividing each triangle into four similar subtriangles.}
	\label{fig:mesh}
\end{figure}

\subsubsection{Measurement Functions}
\label{sec:formulation:numerics:measurement}

We are now ready to elaborate three different measurement function sets $\Phi$ for $d=2$. The results can be extended to higher dimensions without much difficulty. 
\begin{itemize}
    \item (Case V) Volume measurement function: for $\tau\in \TH$, 
        \begin{equation} 
            \phi_{\tau}: = c_\tau \chi(\tau), \text{ and volume measurement } m_{\tau}: = \int_{\Omega} u \phi_{\tau} \dx = c_{\tau}\int_{\tau} u  \dx , 
          \end{equation}
        where $\chi(\tau)$ is the characteristic function of $\tau$ and $c_\tau:=\sqrt{|\tau|}$ is a scaling factor   (see Proposition \ref{prop:iteration} and Lamma \ref{lemma:psi_i^0} for the effect of the scaling factor).  The collection of such measurement functions forms a feasible choice of $\Phi$ and is denoted by $\Phi_\mathcal{T}:= \{\phi_{\tau}\}_{\tau\in\TH}$. 
    \item (Case E) Edge measurement function: for $e\in \EH$,
        \begin{equation}
            \phi_{e}: = c_e\chi(e), \text{ and edge measurement }  m_{e}:= \int_{\Omega} u \phi_{e} \dx = c_e\int_e u \ds, 
            \label{eqn:edge}
        \end{equation}
        where $\chi(e)$ is a generalized function such that $\int_{\Omega} u \chi(e) \dx= \int_{e} u \ds,\,\forall u \in H^1_0(\Omega)$ and $c_e: =|e|^{\frac{2-d}{2(d-1)}}$ is a scaling factor (see Proposition \ref{prop:iteration} and Lamma \ref{lemma:psi_i^0} for the effect of the scaling factor). The collection of such measurement functions is denoted by $\Phi_{\mathcal{E}}:= \{\phi_{e}\}_{e\in \EH}$.
    \item (Case D') First order derivative measurement function: Given a multi-index  $\alpha \in \mathcal{A}:=\{(\alpha_1,\ldots,\alpha_d)| \alpha_i=0 \text{ or } 1, \text{ and }\sum_{i=1}^{d}\alpha_i = 1\}$, $D^{\alpha} u$ denotes the first order (weak) partial derivatives associated with $\alpha$, e.g., $D^{(1,0,\ldots,0)} u= \frac{\partial}{\partial x_1} u$ (we make use of the notation and definition for weak derivative from \cite{evans1998partial}).  For $\alpha \in \mathcal{A}, \text{ and }  \tau \in \TH$, the first order derivative measurement function
        $$
        \phi_{\tau,\alpha}:= D^{\alpha} \phi_{\tau}, 
        \text{ in the sense that } \int_{\Omega} u \phi_{\tau,\alpha} \dx = -\int_{\Omega} D^{\alpha}u \phi_{\tau} \dx, \, \forall u \in H^1_0(\Omega).
        $$
         The set of measurement functions is the union of $\phi_\tau$ and $\phi_{\tau, \alpha}$, for $\tau\in \mathcal{T}$ and $\alpha\in \mathcal{A}$, namely, 
        \begin{equation}
        \Phi_{\mathcal{\D}'}:= \{\phi_{\tau}\}_{\tau\in \TH}\cup \{\phi_{\tau,\alpha}\}_{\tau\in \TH, \alpha \in \mathcal{A}}.
            \label{eqn:caseDp}
        \end{equation} 
        \item (Case D) Combination of volume and edge measurement functions, 
        \begin{equation}
            \Phi_{\D}:= \{\phi_{e}\}_{e\in \EH}\cup \{\phi_{\tau}\}_{\tau\in \TH}.
            \label{eqn:caseD}
        \end{equation}
\end{itemize}

\begin{remark} \label{rem:deriv}
Derivative measurement function can be represented as a linear combination of edge measurement functions. For $\tau\in \TH$ with $\partial \tau = \cup\{e_1,\,e_2,\,e_3\}$, we have
\begin{equation}
	\begin{aligned}
	m_{\tau,\alpha} :=  \int_{\Omega} u \phi_{\tau,\alpha}  \dx =\int_{\Omega} D^{\alpha}u \phi_{\tau} \dx &= c_\tau( \int_{e_1} u n_{1,\alpha} \ds+\int_{e_2} u n_{2,\alpha} \ds +\int_{e_3} u n_{3,\alpha} \ds )  \\
	&=\frac{c_\tau}{c_e}\int_{\Omega} u (n_{1,\alpha}\phi_{e_1}+n_{2,\alpha}\phi_{e_2}+n_{3,\alpha}\phi_{e_3}) \dx,
	\end{aligned}
	\label{eqn:deriv}
\end{equation}
where $n_{i,\alpha}$  denotes the ${\alpha}$ component of the normal direction $\vec{n}_{i}$ of $e_i$, $i = 1, 2, 3$. Therefore, we have $\phi_{\tau,\alpha}= c_\tau/c_e(n_{1,\alpha}\phi_{e_1}+n_{2,\alpha}\phi_{e_2}+n_{3,\alpha}\phi_{e_3})$.
\end{remark}

\begin{proposition}
The set of measurement function $\Phi_D$ defined in \eqref{eqn:caseD} spans the same linear space as $\Phi_{D'}$ defined in \eqref{eqn:caseDp}, for the Dirichlet boundary condition considered in the paper. Moreover, all the measurement functions in $\Phi_{\D}$ are linearly independent, while the first order derivative measurement functions $\phi_{\tau, \alpha}$ can be linearly dependent. See \ref{sec:prop:span_eq} for the proof.
\label{prop:span_eq}
\end{proposition}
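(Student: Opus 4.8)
The plan is to regard all measurement functions as bounded linear functionals on $V\subset H^1_0(\Omega)$, identifying two of them when they agree on every $u\in H^1_0(\Omega)$. The Dirichlet condition enters through $\int_e u\ds=0$ for $u\in H^1_0(\Omega)$ and $e\subset\partial\Omega$, which makes every boundary-edge functional $\phi_e$ vanish identically; hence only the interior edges carry information and all the counting below is taken over interior edges. I would then split the statement into three parts: the inclusion $\operatorname{span}\Phi_{\D'}\subseteq\operatorname{span}\Phi_{\D}$, the reverse inclusion, and the independence/dependence claims.

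The inclusion $\operatorname{span}\Phi_{\D'}\subseteq\operatorname{span}\Phi_{\D}$ is immediate: by Remark \ref{rem:deriv}, $\phi_{\tau,\alpha}=(c_\tau/c_e)\sum_i n_{i,\alpha}\phi_{e_i}$ is a combination of the edge functionals on $\partial\tau$, and the volume functionals $\phi_\tau$ lie in both sets. For the reverse inclusion I would first note that a volume combination and an edge combination can agree as functionals only if both vanish: this is seen by testing with functions $u_\epsilon$ that concentrate on the relative interior of a single interior edge, so that the edge integral stays bounded away from $0$ while all volume integrals tend to $0$. Consequently both spans decompose as a volume part plus an edge part with identical volume parts, and it remains to prove $\operatorname{span}\{\phi_e\}_{e\in\EH}\subseteq\operatorname{span}\{\phi_{\tau,\alpha}\}$.

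This is the crux. Encoding $\sum_{\tau,\alpha}a_{\tau,\alpha}\phi_{\tau,\alpha}$ by its coefficient $\beta_e$ on each interior edge and setting $\vec b_\tau=c_\tau(a_{\tau,(1,0)},a_{\tau,(0,1)})$, the cancellation of opposite normals on a shared edge gives $\beta_e=(\vec b_\tau-\vec b_{\tau'})\cdot\vec n_e$ for the edge $e=\tau\cap\tau'$. I would show the resulting map $\{\vec b_\tau\}\mapsto\{\beta_e\}$ is onto by proving its adjoint injective: if $\sum_{e\subset\partial\tau,\ e\ \mathrm{interior}}\beta_e\,\vec n^\tau_e=0$ for every $\tau$, then a triangle with two boundary edges forces its single interior coefficient to vanish; a triangle with one boundary edge has two independent interior normals, forcing both coefficients to vanish; and on a fully interior triangle the closed-boundary identity $\sum_i|e_i|\vec n_i=0$ renders the solution one-dimensional, $\beta_{e_i}=\lambda_\tau|e_i|$. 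Starting from the triangles adjacent to $\partial\Omega$, whose interior coefficients already vanish, and propagating $\lambda_\tau=0$ across shared edges by connectivity of the dual graph of $\TH$, I obtain $\beta\equiv0$; surjectivity then yields every $\phi_e\in\operatorname{span}\{\phi_{\tau,\alpha}\}$ and completes the span equality.

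Finally, for independence of $\Phi_{\D}$ I would reuse the concentration test functions, first to separate the edge and volume parts and then to isolate each interior edge and each triangle, forcing all coefficients to vanish. To exhibit a dependence among the $\phi_{\tau,\alpha}$ alone, I would use the divergence theorem: for each fixed $\alpha$, the functional $\sum_\tau c_\tau^{-1}\phi_{\tau,\alpha}$ sends $u\in H^1_0(\Omega)$ to $\sum_\tau\int_\tau D^\alpha u\dx=\int_\Omega D^\alpha u\dx=\int_{\partial\Omega}u\,n_\alpha\ds=0$, so $\sum_\tau c_\tau^{-1}\phi_{\tau,\alpha}=0$ is a nontrivial relation (e.g.\ on the two-triangle unit square all four $\phi_{\tau,\alpha}$ collapse to multiples of the single interior-diagonal functional). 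The main obstacle is exactly the surjectivity step: the per-triangle normal structure together with these global relations produces many dependencies among the $\phi_{\tau,\alpha}$, and one must verify that connectivity together with the boundary anchoring compensates them precisely, so that the derivative functionals still recover every interior-edge functional.
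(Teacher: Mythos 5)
Your argument is correct, and it reaches the span equality by a genuinely different route from the paper's. The paper argues constructively and inductively: it starts from a triangle with an edge on $\partial\Omega$, where the Dirichlet condition kills the boundary-edge functional so that the two derivative functionals, through the invertible $2\times 2$ matrix of interior-edge normals, reproduce the two interior-edge functionals; it then adds edge-adjacent neighbours one at a time, at each step solving for the newly introduced edge functionals in terms of the new triangle's derivative functionals and the already-recovered functional on the shared edge, and it infers the possible dependence of the $\phi_{\tau,\alpha}$ purely by counting how many independent edge functionals each new element contributes. Your proof replaces this element-by-element construction with a global duality argument: surjectivity of the coefficient map $\{\vec b_\tau\}\mapsto\{\beta_e\}$ follows from injectivity of its adjoint, whose kernel you identify exactly via the closed-polygon identity $\sum_i |e_i|\,\vec n_i=0$ on interior triangles and then annihilate by anchoring at boundary-adjacent triangles and propagating through the connected dual graph. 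Both proofs rest on the same two structural facts --- vanishing of boundary-edge functionals under the Dirichlet condition, and edge-connectivity of the mesh --- but yours buys a clean certification that no hidden obstruction survives (precisely the worry you raise at the end, which the adjoint computation settles), an explicit separation of the volume and edge parts that the paper leaves implicit, and a concrete global dependence relation $\sum_{\tau} c_\tau^{-1}\phi_{\tau,\alpha}=0$ from the divergence theorem; the paper's version, in exchange, is constructive and yields explicit representations of each $\phi_e$ in terms of the $\phi_{\tau,\alpha}$.
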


By Proposition \ref{prop:span_eq}, we only need to consider case D instead of case D' to avoid working with linearly dependent measurement functions. We construct the GRPS space $\Psi$ for each case, using the variational formulation \eqref{eqn:psi}.
\begin{itemize}
	\item Case V: 
	$\Psi := \mathrm{span}\{\psi_{\tau}\}_{\tau\in \TH},$ where $\psi_{\tau}$ is the solution of \eqref{eqn:psi} with respect to $\Phi_\mathcal{T}$.
	\item Case E:  $\Psi := \mathrm{span}\{\psi_{e}\}_{e\in \EH},$ where $\psi_{e}$ is the solution to \eqref{eqn:psi} with respect to $\Phi_\mathcal{E}$.
    \item Case D: $\Psi := \mathrm{span}( \{\psi^{\D}_{e}\}_{e\in \EH}\cup \{\psi^{\D}_{\tau}\}_{\tau\in \TH})$. We note that $\psi^\D_{\tau}$ needs to satisfy the constraints such that 
	$[\psi^\D_{\tau}, \phi_{\tau'}] = \delta_{\tau, \tau'}$, and 
    $[\psi^\D_{\tau}, \phi_{e}]=0$, which is different from $\psi_{\tau}$ in case V. $\psi^\D_{e}$ needs to satisfy similar constraints.   
\end{itemize}

In the following, when no confusion arises, we also denote the set of measurement functions by $\Phi:=\{\phi_i\}_{i\in \I}$ and the space of basis by $\Psi:= \mathrm{span}\{\psi\}_{i\in \I}$ using a general index set $\I$ without specifying particular measurements and corresponding bases. We write $N = \# \I$.   

\subsubsection{Localization}
\label{sec:formulation:numerics:localization}

\def \Tc{\mathcal{T}_c}

The basis defined in \eqref{eqn:psi} is globally supported in $\Omega$, which is not practical for applications. In this section, we introduce the notions of local patches and also the formulation of localized bases. 

Let the $0$-th layer patch $\Omega_i^0$ be the smallest subset of $\Omega$ such that $\mathrm{supp}(\phi_i) \subset \Omega_i^0$ and consists of simplices in $\TH$. The $\ell$-th layer patch $\Omega^\ell_i = \cup\{\tau\in\TH: \tau\cap\Omega^{\ell-1}_i\neq \emptyset\}$ for $\ell\geq 1$ can be defined recursively. We refer to Figure \ref{fig:patch} and Figure \ref{fig:edge_patch} to illustrate the local patches for volume measurement and edge measurement, respectively. 

\begin{figure}[H]
	\centering
	\subfigure[$\Omega_i^0$]
	{\includegraphics[width=0.32\textwidth]{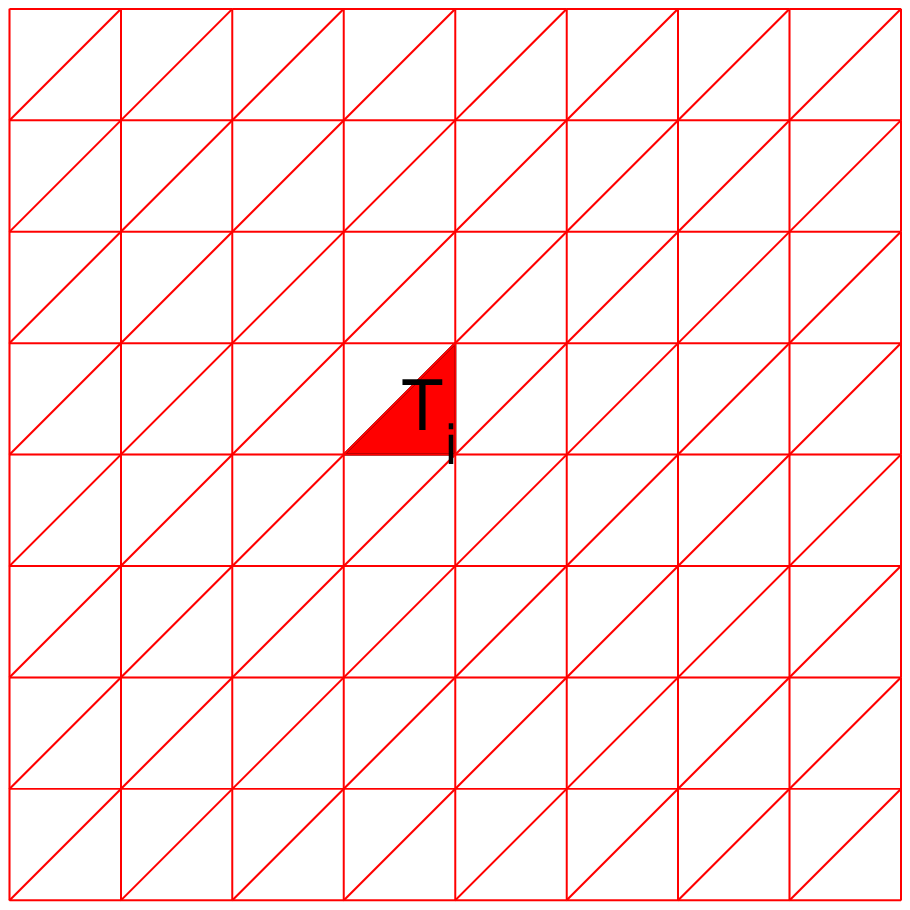}}
	\subfigure[$\Omega_i^1$]
	{\includegraphics[width=0.32\textwidth]{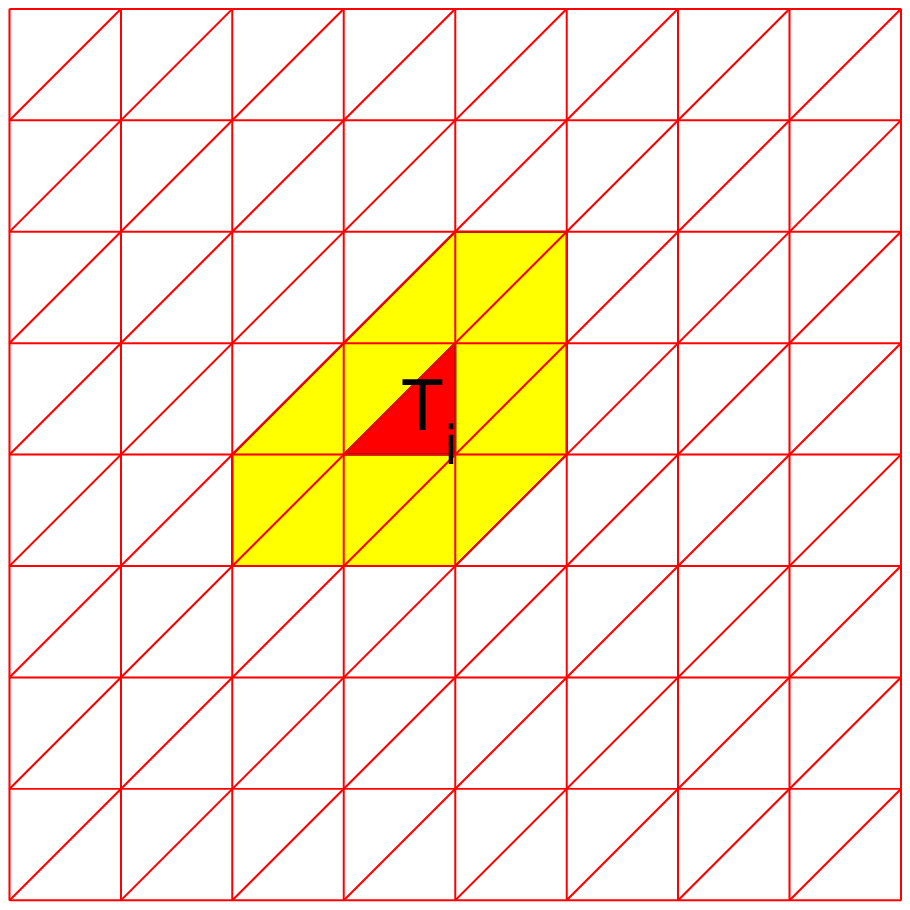}}
	\subfigure[$\Omega_i^2$]
	{\includegraphics[width=0.32\textwidth]{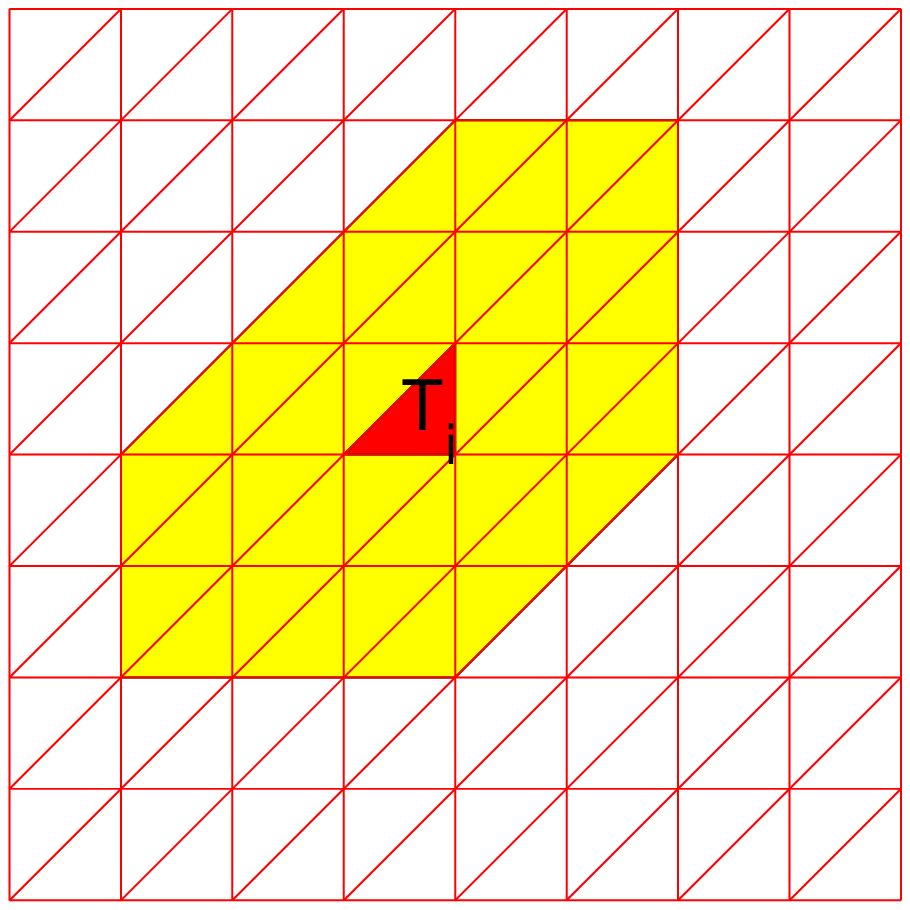}}
	\caption{Local patches for volume measurements.}
	\centering
	\label{fig:patch}
\end{figure}

\begin{figure}[H]
	\centering
	\subfigure[$\Omega_i^0$]
	{\includegraphics[width=0.32\textwidth]{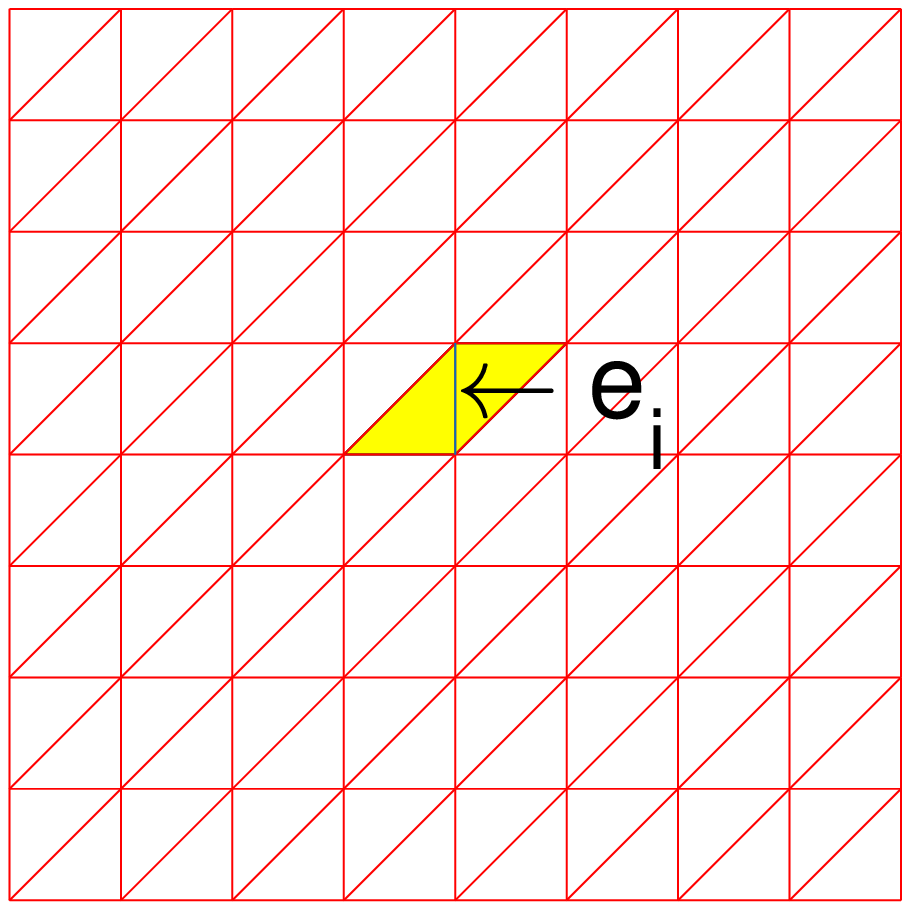}}
	\subfigure[$\Omega_i^1$]
	{\includegraphics[width=0.32\textwidth]{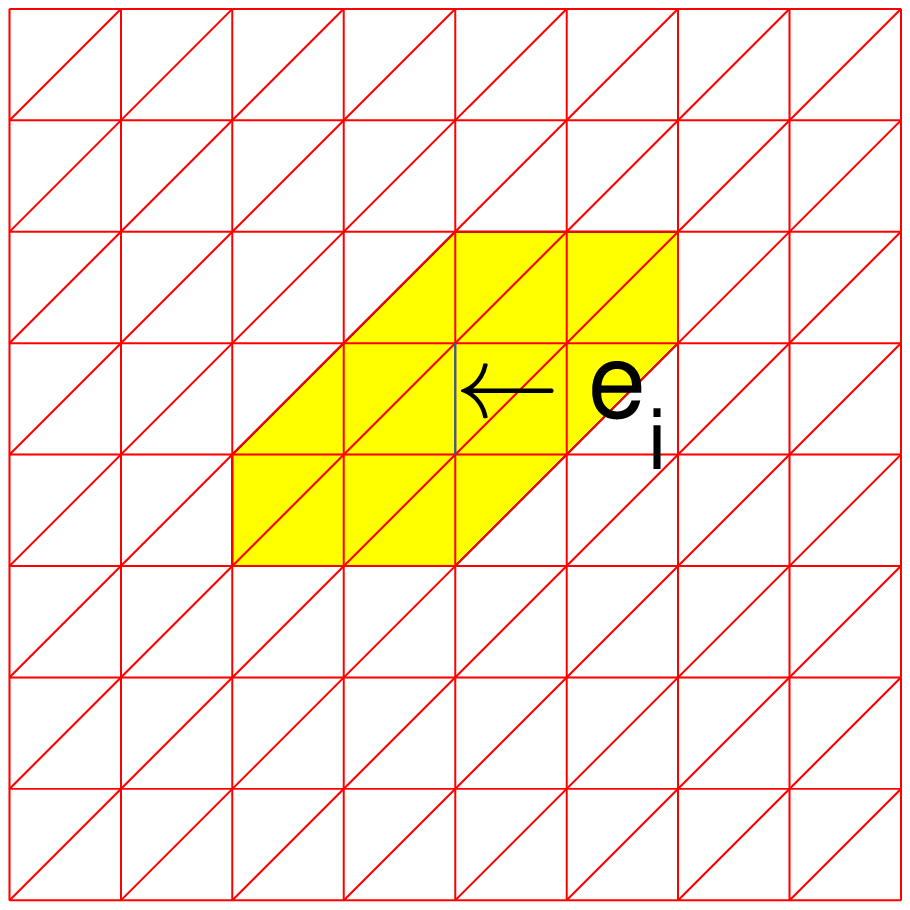}}
	\subfigure[$\Omega_i^2$]
	{\includegraphics[width=0.32\textwidth]{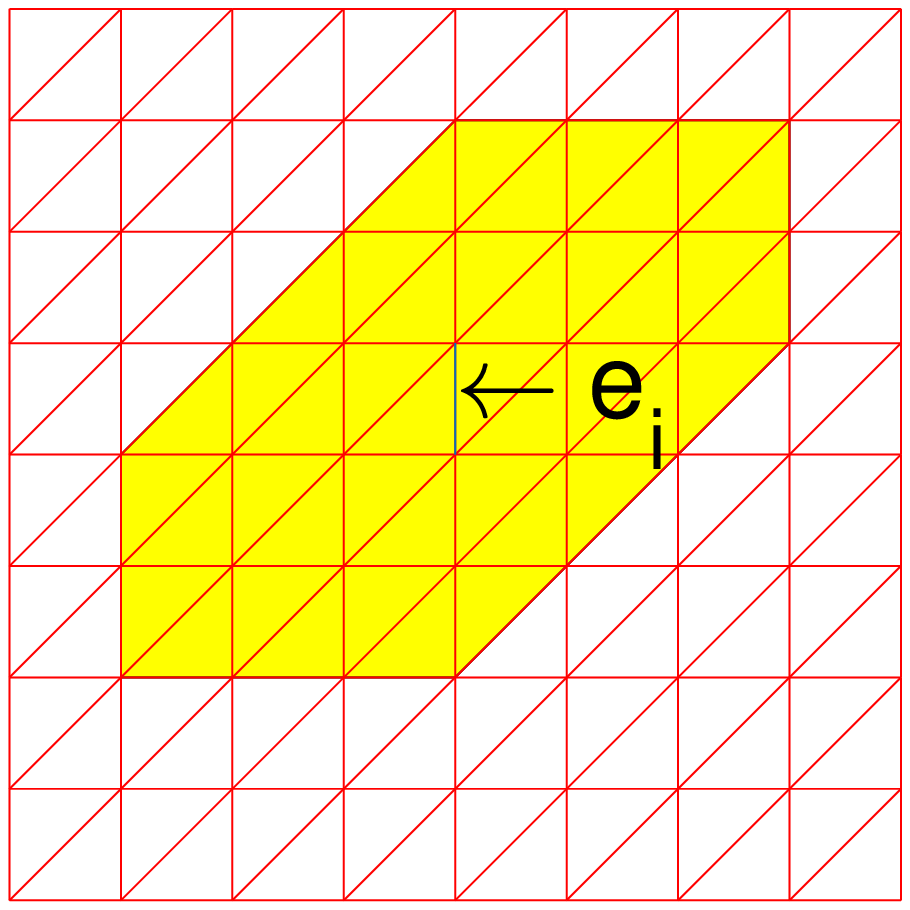}}
	\caption{Local patches for  edge measurements.}
	\centering
	\label{fig:edge_patch}
\end{figure}

To this end, we can localize the computation of $\psi_i$ to a local patch $\Omega^\ell_i$. For any $i\in \I$ and ${\ell}\in \mathbb{N}$,
\begin{equation}
\begin{cases}
\psi_i^{\ell} =\argmin a(v,v) \\
s.t.\ v\in H_0^1(\Omega_i^{\ell})\   and\ [v, \phi_j]=\delta_{i,j},\  \forall j \in \I.
\end{cases}
\label{eqn:psi_i^0}
\end{equation}
The space of localized bases is $\Psi^{\ell} :=\mathrm{span}\{\psi_i^{\ell}\}_{i \in \I}$.

\subsection{Numerical homogenization}

By \textit{numerical homogenization}, we refer to the finite element formulation in the coarse space $\Psi$, namely, to find $u_H \in \Psi$ such that
\begin{equation}
a(u_H,v_H)=[g,v_H],\ \forall v_H\in \Psi.
\label{eqn:varellpFEM}
\end{equation} 

In practice, the  equation \eqref{eqn:varellpFEM} is solved in the space of localized bases $\Psi^{\ell}$, and we write $u_H^{\ell}\in \Psi^{\ell}$ the solution to 
\begin{equation}
a(u_H^{\ell},v_H)=[g,v_H],\ \forall v_H\in \Psi^{\ell}.
\label{eqn:local_varellpFEM}
\end{equation}

For the analysis, we assume that the coarse bases in $\Psi$ or $\Psi^\ell$ are exact solutions of the variational formulation \eqref{eqn:psi} or \eqref{eqn:psi_i^0}. In the numerical experiments, we compute the basis on a sufficiently fine mesh $\Th$, and assume that the discretization error is negligible.

\section{Analysis}
\label{sec:analysis}

In this section, we first present an error analysis in Section \ref{sec:analysis:globalbasis} for the proposed two-level multiscale methods with the basis defined in \eqref{eqn:psi} using edge or first order derivative measurement functions, such that the optimal convergence rate $\mathcal{O}(H)$ for $\| u-u_H \| $ holds (Theorem \ref{thm:convergense of global}), where $u$ is the true solution and $u_H$ is the finite element solution of \eqref{eqn:varellpFEM} with global GRPS basis functions. In Section \ref{sec:analysis:localization}, we propose to compute the basis on a localized patch, and show the exponential decay of the truncation error between the localized and global basis functions in Theorem \ref{thm:decay}. We conclude Section \ref{sec:analysis:localbasis} with our main Theorem \ref{thm:local}, the error estimate for the multiscale method with localized bases, which states that: 

Let $\ell$ indicates the number of layers for the support of the localized basis, and $u_H^{\ell}$ be the solution to \eqref{eqn:local_varellpFEM} with the local bases. The solution error can be controlled by 
$$
\|u -u_H^{\ell}\| \leq \| u -u_H\| +\| u_H- u_H^{\ell}\|.
$$

The first term $\| u -u_H\|$ is of order $\mathcal{O}(H)$, and the second term $\| u_H -u_H^{\ell}\|$ depends on the truncation error decaying exponentially with respect to $\ell$. 


We recall the simplex-wise trace theorem and zero mean boundary type Poincar\'{e} inequality which will be used in the following analysis.

\begin{lemma}[trace inequality \cite{verfurth1999error}]\label{lem:trace}
	For any $\tau \in \mathcal{T}_{H},$ any $e \subset \partial \tau$  and any $v \in H^{1}(\tau)$ we have
	\begin{equation}
	\|v\|_{L^2(e)} \leq\left\{d \frac{|e|}{|\tau|}\right\}^{1 / 2}\left\{\|v\|_{L^2(\tau)}+H_{\tau}\|\nabla v\|_{L^2(\tau)}\right\}	
	\end{equation}	
\end{lemma}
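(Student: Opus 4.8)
The plan is to prove the trace inequality directly via the divergence theorem on the simplex $\tau$. This avoids any reference-element scaling, yields the explicit geometric constant $d|e|/|\tau|$ without even invoking shape regularity, and isolates the one nontrivial ingredient, namely the choice of an appropriate vector field. By density it suffices to establish the bound for smooth $v\in C^{\infty}(\overline{\tau})$ and then pass to the limit in $H^{1}(\tau)$.

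First I would fix the vertex $P$ of $\tau$ opposite to the face $e$ and introduce the affine vector field $\beta(x):=x-P$. This choice is engineered so that $\div\beta\equiv d$ and so that $\beta$ interacts cleanly with $\partial\tau$: for every face $e'\neq e$ of $\tau$ the vertex $P$ lies in $e'$, so $\beta$ is tangent there and $\beta\cdot n=0$; while on $e$ itself $\beta\cdot n=(x-P)\cdot n$ equals the (constant) distance from $P$ to the hyperplane containing $e$, i.e. the height $h:=d|\tau|/|e|$ of $\tau$ over $e$ (using $|\tau|=\tfrac1d|e|h$). Verifying these two geometric facts is the heart of the argument and the only place I would be careful, in particular confirming that $\beta\cdot n$ is constant on $e$ and vanishes on each face through $P$.

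The rest is a routine computation. Applying the divergence theorem to $v^{2}\beta$ and using $\div(v^{2}\beta)=2v\,\nabla v\cdot\beta+d\,v^{2}$ together with the boundary behavior above gives
\begin{equation}
h\,\|v\|_{L^2(e)}^2=\int_{\partial\tau}v^{2}\,\beta\cdot n\,\ds=\int_{\tau}\bigl(2v\,\nabla v\cdot\beta+d\,v^{2}\bigr)\dx.
\end{equation}
Bounding $|\beta(x)|=|x-P|\le H_{\tau}$ and applying Cauchy--Schwarz to $\int_{\tau}|v|\,|\nabla v|\dx\le\|v\|_{L^2(\tau)}\|\nabla v\|_{L^2(\tau)}$ yields
\begin{equation}
h\,\|v\|_{L^2(e)}^2\le d\,\|v\|_{L^2(\tau)}^2+2H_{\tau}\,\|v\|_{L^2(\tau)}\|\nabla v\|_{L^2(\tau)}.
\end{equation}
Substituting $h=d|\tau|/|e|$ and completing the square, using
\begin{equation}
\|v\|_{L^2(\tau)}^2+\tfrac{2H_{\tau}}{d}\|v\|_{L^2(\tau)}\|\nabla v\|_{L^2(\tau)}\le\bigl(\|v\|_{L^2(\tau)}+H_{\tau}\|\nabla v\|_{L^2(\tau)}\bigr)^2
\end{equation}
(valid since $d\ge1$), then taking square roots delivers the stated estimate, the constant $d|e|/|\tau|$ being in fact slightly generous.

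I do not expect a serious obstacle beyond the geometric identification of $\beta\cdot n$ and the bookkeeping that converts the quadratic estimate into the asserted form. An alternative route maps $\tau$ to a reference simplex, invokes the classical Sobolev trace theorem there, and tracks the Jacobians back to $\tau$; this is more typical of the finite-element literature but introduces shape-regularity constants and a less transparent dependence on $|e|$, $|\tau|$, and $H_{\tau}$, so I would favor the divergence-theorem proof above.
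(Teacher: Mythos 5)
Your argument is correct: the choice $\beta(x)=x-P$ with $P$ the vertex opposite $e$ does give $\div\beta=d$, $\beta\cdot n=0$ on the faces through $P$, and $\beta\cdot n=d|\tau|/|e|$ on $e$, and the subsequent Cauchy--Schwarz and completing-the-square steps go through (indeed yielding the sharper constant $|e|/|\tau|$). The paper itself offers no proof of this lemma --- it is quoted verbatim from the cited reference \cite{verfurth1999error} --- and your divergence-theorem derivation is essentially the standard argument behind that cited result, so there is nothing further to reconcile.
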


\begin{lemma}[zero mean boundary type Poincar\'{e} inequality \cite{NazRep:2015}]\label{lem:zero_mean}
	Let $\tau$ be a shape regular triangle, then there exists a constant $C_{\tau}$ depending on the diameter of $\tau$ such that
	\begin{equation}
	\|w\|_{L^2(\tau)} \leq C_{\tau}\|\nabla w\|_{L^2(\tau)},\ \forall w\in \tilde{H^1}(\tau):=\left\{w\in H^1(\tau)| \int_{\partial \tau} w = 0\right\},
	\end{equation}
\end{lemma}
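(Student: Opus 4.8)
The statement is a generalized Poincar\'{e} inequality in which the usual zero-mean constraint is replaced by the vanishing of the boundary integral $\int_{\partial\tau} w\,\ds = 0$. My plan is to cast it in the abstract Deny--Lions framework: for a bounded linear functional $L$ on $H^1(\tau)$ that does not vanish on the constants, one has $\|w\|_{L^2(\tau)} \leq C(\|\nabla w\|_{L^2(\tau)} + |L(w)|)$, and the desired estimate follows at once by restricting to the kernel $\{L(w)=0\}$. Here the relevant functional is $L(w) := \int_{\partial\tau} w\,\ds$. First I would verify that $L$ is continuous on $H^1(\tau)$: summing the trace inequality (Lemma \ref{lem:trace}) over the edges of $\tau$ and applying Cauchy--Schwarz gives $|L(w)| \leq |\partial\tau|^{1/2}\|w\|_{L^2(\partial\tau)} \leq C\,\|w\|_{H^1(\tau)}$. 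Since $L(1) = |\partial\tau| \neq 0$, the functional is admissible.

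The core of the argument is a compactness (contradiction) proof of the abstract inequality, most cleanly carried out on a fixed reference triangle $\hat\tau$. Suppose the bound fails: then there is a sequence $w_n \in \tilde{H^1}(\hat\tau)$ with $\|w_n\|_{L^2(\hat\tau)} = 1$ and $\|\nabla w_n\|_{L^2(\hat\tau)} \to 0$. The sequence is bounded in $H^1(\hat\tau)$, so by Rellich--Kondrachov a subsequence converges strongly in $L^2(\hat\tau)$ to some $w$, while $\nabla w_n \to 0$ forces $\nabla w = 0$, i.e. $w$ is constant. Continuity of the trace operator lets me pass the constraint to the limit, $\int_{\partial\hat\tau} w\,\ds = 0$, which for a constant forces $w \equiv 0$; this contradicts $\|w\|_{L^2(\hat\tau)} = \lim\|w_n\|_{L^2(\hat\tau)} = 1$. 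Hence the inequality holds on $\hat\tau$ with some constant $\hat C$.

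Finally I would transfer the estimate to an arbitrary shape-regular triangle $\tau$ by an affine change of variables $x = B\hat x + b$ mapping $\hat\tau \to \tau$. Standard scaling expresses $\|w\|_{L^2(\tau)}$ and $\|\nabla w\|_{L^2(\tau)}$ through the reference quantities with factors controlled by $\|B\|$, $\|B^{-1}\|$ and $|\det B|$; shape regularity ($H_\tau/\rho_\tau \leq \gamma$) yields $\|B\| \sim H_\tau$ and $\|B^{-1}\| \sim H_\tau^{-1}$, producing a constant $C_\tau$ of order $H_\tau$, as claimed.

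The main obstacle I anticipate is that the constraint does not pull back cleanly: under the affine map each edge integral scales by the edge-length ratio $|e_i|/|\hat e_i|$, so $\int_{\partial\tau} w\,\ds = 0$ becomes a \emph{weighted} sum of the reference edge integrals rather than the unweighted $\int_{\partial\hat\tau}\hat w\,\ds = 0$. To keep the compactness argument self-contained I would therefore either (i) run the contradiction argument directly on each $\tau$, invoking shape regularity only to obtain uniformity of the constant across the admissible family of weights, or (ii) phrase the abstract lemma for the whole compact family of functionals $L$ simultaneously. Either way, the delicate point is verifying that the limiting constant stays bounded uniformly over shape-regular triangles --- equivalently, that the weights $|e_i|/|\hat e_i|$ remain bounded away from $0$ and $\infty$; the remainder of the argument is routine.
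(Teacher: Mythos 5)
Your argument is correct and complete: the functional $L(w)=\int_{\partial\tau}w\,\mathrm{d}s$ is bounded on $H^1(\tau)$ by the trace inequality and does not annihilate constants, the compactness (Rellich--Kondrachov) contradiction argument on a reference triangle is sound (the trace map is compact from $H^1$ into $L^2(\partial\hat\tau)$, so the constraint does pass to the limit), and you have correctly isolated the one genuinely delicate point --- that the boundary constraint pulls back to a \emph{weighted} sum of reference edge integrals, so that uniformity of the constant over the shape-regular family needs a separate compactness-of-the-weights argument. For comparison: the paper does not prove this lemma at all; it is quoted as a known result with a citation to Nazarov and Repin, whose paper derives such Poincar\'{e}-type inequalities with zero mean boundary traces together with explicit (indeed sharp) constants via spectral/variational arguments on the reference element. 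Your route is more elementary and self-contained but non-constructive in the constant, which is immaterial for the way the lemma is used here (one only needs $C_\tau\lesssim H_\tau$ uniformly over shape-regular elements, exactly the uniformity you take care to secure). One cosmetic remark: the lemma's phrase ``depending on the diameter of $\tau$'' is really ``$C_\tau\le C(\gamma,d)\,H_\tau$'', which is what your affine scaling with $\|B\|\sim H_\tau$ delivers, so your conclusion matches the intended statement.
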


\subsection{Accuracy of Global Basis}
\label{sec:analysis:globalbasis}

In this section, we will prove that the finite element solutions to \eqref{eqn:varellpFEM}, with respect to spaces $\Psi$ of global bases derived from case V (volume), case E (edge) or case D (derivative), achieve $\mathcal{O}(H)$ convergence rate.

\begin{theorem}\label{thm:convergense of global}
	Let $u$ be the solution of \eqref{eqn:varellp}, then  $u_H := \sum_{i\in \I} m_i \psi_i$, with $ m_i:=[u,\phi_i]$, is the unique finite element solution to \eqref{eqn:varellpFEM}.  And we have
	\begin{equation}
	\|u-u_H\|_{H_0^1} \leq \kappa_{min}^{-1}C  H\|g\|_{L^2}
	\end{equation}	
\begin{proof}
	Uniqueness is a direct result of the coerciveness of $a(\cdot,\cdot)$. The variational property of $\Psi$, in Proposition \ref{prp:var}, suggests that $a(u_H-u,v)=0,\, \forall v\in \Psi$. Hence  $u_H$ is the unique solution to \eqref{eqn:varellp} over $\Psi$.

	Let $r := u-u_H$, recalling the Galerkin orthogonality $a(r, v) = 0, \forall v\in \Psi$, we have
	\begin{equation}
	\kappa_{min} \|r\|_{H_0^1}^2 \leq a(r,r)=[g,r]\leq \|g\|_{L^2}\|r\|_{L^2}\sx{.} \label{eqn:coercive}
	\end{equation}
	Noting that $[r,\phi_i]=0$ for any $i$, it is sufficient to show 
	\begin{equation}
	    \|r\|_{L^2} \leq C_2 H\|r\|_{H_0^1}
	\label{eqn:poincare}
	\end{equation} for all three cases. 
	
	For case V, the Poincar\'{e} inequality implies $\|r\|_{L^2} \leq C_1 H\|r\|_{H_0^1}$. 
	
	For case E, \eqref{eqn:poincare} can be verified using Lemma \ref{lem:zero_mean}.  
	
	For case D, a  combination of the Poincar\'{e} inequality and Lemma \ref{lem:zero_mean} leads to \eqref{eqn:poincare}.   
\end{proof}
\end{theorem}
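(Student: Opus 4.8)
The plan is to establish two separate claims: that $u_H=\sum_{i\in\I}m_i\psi_i$ is the unique Galerkin solution of \eqref{eqn:varellpFEM}, and then the quantitative $\mathcal{O}(H)$ estimate. Coercivity of $a(\cdot,\cdot)$ on $H_0^1(\Omega)$ is immediate from $\kappa\geq\kappa_{min}>0$, and restricting to the finite-dimensional subspace $\Psi$ yields, by Lax--Milgram, a unique minimizer of \eqref{eqn:varellpFEM}; so it will suffice to verify that $u_H$ satisfies Galerkin orthogonality $a(u-u_H,v_H)=0$ for all $v_H\in\Psi$. First I would use the constraint $[\psi_i,\phi_j]=\delta_{i,j}$ from \eqref{eqn:psi} to compute $[u_H,\phi_j]=\sum_i m_i[\psi_i,\phi_j]=m_j=[u,\phi_j]$, hence $[u-u_H,\phi_j]=0$ for every $j\in\I$. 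The variational property of Proposition \ref{prp:var}, applied with $v=u-u_H$, then gives $a(\psi_i,u-u_H)=0$ for each $i$, i.e.\ orthogonality over $\Psi$; together with $a(u,v_H)=[g,v_H]$ this identifies $u_H$ as the solution of \eqref{eqn:varellpFEM}.

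For the error bound I would set $r=u-u_H$. Since $u_H\in\Psi$, Galerkin orthogonality gives $a(r,r)=a(r,u)=a(u,r)=[g,r]$, the last equality using symmetry of $a$ and the weak form \eqref{eqn:varellp} tested against $r\in H_0^1(\Omega)$. Coercivity then produces the chain $\kappa_{min}\|r\|_{H_0^1}^2\leq a(r,r)=[g,r]\leq\|g\|_{L^2}\|r\|_{L^2}$, so the entire estimate reduces to a Poincar\'e-type inequality $\|r\|_{L^2}\leq CH\|r\|_{H_0^1}$. The key structural input is that $r$ is orthogonal to every measurement function, i.e.\ $[r,\phi_i]=0$ for all $i\in\I$, which I would translate into vanishing local averages of $r$ over the coarse mesh.

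The Poincar\'e step is then carried out element by element, and this is where the three cases diverge. In Case V, $[r,\phi_\tau]=c_\tau\int_\tau r\dx=0$ says $r$ has zero mean on each $\tau\in\TH$, so the standard zero-average Poincar\'e inequality gives $\|r\|_{L^2(\tau)}\leq CH_\tau\|\nabla r\|_{L^2(\tau)}$; squaring and summing over $\tau$ yields the claim with $H=\max_\tau H_\tau$. In Case E, $[r,\phi_e]=c_e\int_e r\ds=0$ for every $e\in\EH$, hence $\int_{\partial\tau}r=\sum_{e\subset\partial\tau}\int_e r\ds=0$, placing $r|_\tau$ in the space $\tilde{H^1}(\tau)$ of Lemma \ref{lem:zero_mean}, which then gives $\|r\|_{L^2(\tau)}\leq C_\tau\|\nabla r\|_{L^2(\tau)}$. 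Case D contains $\Phi_{\mathcal{E}}$, so the edge-average argument applies directly and the volume constraints are extra. I expect the main obstacle to be controlling the constant in Case E: one must check, via shape regularity of $\TH$ and a scaling argument from a reference element, that $C_\tau$ in Lemma \ref{lem:zero_mean} scales like $H_\tau$, so that the elementwise constants aggregate into a uniform factor $CH$ rather than degrading under refinement. This is the only step that genuinely uses the coarse-mesh geometry and the precise form of the zero-mean boundary Poincar\'e inequality, and it is what makes the $\mathcal{O}(H)$ rate robust to the roughness of $\kappa$.
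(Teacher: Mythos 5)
Your proposal is correct and follows essentially the same route as the paper: uniqueness from coercivity, Galerkin orthogonality via the variational property of Proposition \ref{prp:var} combined with $[u-u_H,\phi_j]=0$, reduction of the error bound to the Poincar\'e-type inequality $\|r\|_{L^2}\leq CH\|r\|_{H_0^1}$, and the case split using the standard zero-mean Poincar\'e inequality for Case V and Lemma \ref{lem:zero_mean} for Cases E and D. You simply make explicit some steps the paper leaves terse (the computation $[u_H,\phi_j]=m_j$, the elementwise summation, and the $H_\tau$-scaling of the constant in Lemma \ref{lem:zero_mean}), all of which are consistent with the paper's argument.
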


\subsection{Localization}
\label{sec:analysis:localization}

The global basis cannot be used directly in practice, therefore, we propose to use the localized basis defined in \eqref{eqn:psi_i^0}. It is crucial to know the level of localization $\ell$ a priori, given accuracy and complexity constraints. In this section, we show that the corrector $\psi_i - \psi^\ell_i $ in all cases V, E and D decays exponentially with respect to $\ell$, enlightened by the idea of subspace decomposition addressed in \cite{Kornhuber2016e,owhadi2019operator}.

First, we introduce a partition of unity.  We use $\Ih$ to denote the index set of the partition of unity to distinguish it from the index set $\I$ for the set of measurement functions. For each $ x_{\hat{\dotlessi}} \in \NH$, let $\omega_{\hat{\dotlessi}}:=\cup\{\tau\in \TH | x_{\hat{\dotlessi}} \in \tau\}$, and $\eta_{\hat{\dotlessi}}$ be the piecewise linear function associated with $x_{\hat{\dotlessi}}$, such that $\eta_{\hat{\dotlessi}}(x_{\hat{\dotlessj}})=\delta_{\hat{\dotlessi},\hat{\dotlessj}}$. Then  $\{\eta_{\hat{\dotlessi}}\}_{\hat{\dotlessi} \in \Ih}$ forms a partition of unity  and
$$
	H_0^1(\Omega)=\sum_{\hat{\dotlessi}\in \Ih} H_0^1(\omega_{\hat{\dotlessi}})
$$
and $\forall v\in H_0^1(\Omega)$,
$$
v = \sum_{\hat{\dotlessi}\in \Ih} v_{\hat{\dotlessi}},
$$
with $v_{\hat{\dotlessi}}= v\eta_{\hat{\dotlessi}}$.

We define  
\begin{equation}
\Phi^{\perp}:=\{u\in H_0^1(\Omega)\,\big|[u,\phi_i]=0, \ \forall i \in \I\},
\end{equation}
and $\Phi_{\hat{\dotlessi}}^{\perp}:=H_0^1(\omega_{\hat{\dotlessi}})\cap \Phi^{\perp}$. 
Let $P_{\hat{\dotlessi}}:H_0^1(\Omega)\to \Phi_{\hat{\dotlessi}}^{\perp}$ be the a-orthogonal projection, such that for any $v \in H_0^1(\Omega)$,
$$
a(P_{\hat{\dotlessi}} v,w)=a(v,w),  \  \forall \, w \in \Phi_{\hat{\dotlessi}}^{\perp}.
$$
The additive subspace decomposition operator $P:\Phi^{\perp}\rightarrow \Phi^{\perp}$ is defined as 
\begin{equation}
    P:= \sum_{\hat{\dotlessi}\in \Ih}P_{\hat{\dotlessi}}.
\end{equation}
We note that the support of $P_{\hat{\dotlessi}}v$ is $\omega_{\hat{\dotlessi}}$. For any $v \in H_0^1(\Omega_i^{\ell})$, the support of $Pv$ is contained in $\cup\{\omega_{\hat{\dotlessi}}| \omega_{\hat{\dotlessi}}\cap\Omega_i^{\ell}\neq \emptyset,  \hat{\dotlessi}\in \Ih \}$. Namely, applying $P$ on $v$ expands the its support by one layer, and $Pv\in H_0^1(\Omega_i^{{\ell}+1})$. The additive subspace decomposition operator $P$ can be utilized as a preconditioner to iteratively approximate any $\chi \in \Phi^{\perp}$. Noticing that the corrector $\psi_i^\ell - \psi_i\in \Phi^{\perp}$, the following proposition shows that if $\mathrm{cond(P)} < \infty$, the truncation error $\|\psi_i^{\ell}-\psi_i\| $ decays exponentially with respect to ${\ell}$.

\begin{lemma}
	If $\mathrm{cond}(P) < \infty$, then 
	\begin{equation}
	\|\psi_i^{\ell}-\psi_i\|\leq \big(\frac{\mathrm{cond}(P)-1}{\mathrm{cond}(P)+1}\big)^{\ell} \|\psi_i^0\|.
	\end{equation} 
	\label{prop:iteration}
	The proof is very similar to \cite{owhadi2019operator}, see \ref{Sec:lemma subspace decomposition} for details.
\end{lemma}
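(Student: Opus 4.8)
The plan is to reduce the estimate to a best-approximation statement and then realize that best approximation through a preconditioned iteration driven by $P$, whose iterates have supports that grow by exactly one coarse layer per step. First I would record the variational characterization of the two bases. Since $\psi_i$ and $\psi_i^{\ell}$ both satisfy $[\,\cdot\,,\phi_j]=\delta_{i,j}$, their difference lies in $\Phi^{\perp}$, and Proposition \ref{prp:var} gives $a(\psi_i,w)=0$ for every $w\in\Phi^{\perp}$. Consequently, for any feasible competitor $v\in H_0^1(\Omega_i^{\ell})$ with $[v,\phi_j]=\delta_{i,j}$ one has $\|v\|^2=\|\psi_i\|^2+\|v-\psi_i\|^2$, so minimizing the energy is equivalent to minimizing $\|v-\psi_i\|$. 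Writing $v=\psi_i^0+w$ with $w\in H_0^1(\Omega_i^{\ell})\cap\Phi^{\perp}$ (legitimate because $\mathrm{supp}(\psi_i^0)\subset\Omega_i^0\subset\Omega_i^{\ell}$), this shows
\[
\|\psi_i^{\ell}-\psi_i\|=\min_{w\in H_0^1(\Omega_i^{\ell})\cap\Phi^{\perp}}\|\chi-w\|,\qquad \chi:=\psi_i-\psi_i^0\in\Phi^{\perp}.
\]
It therefore suffices to exhibit one admissible $w_{\ell}\in H_0^1(\Omega_i^{\ell})\cap\Phi^{\perp}$ with $\|\chi-w_{\ell}\|\le\rho^{\ell}\|\chi\|$, where $\rho:=(\mathrm{cond}(P)-1)/(\mathrm{cond}(P)+1)$.

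Next I would build $w_{\ell}$ by the $a$-symmetric damped Richardson iteration on $\Phi^{\perp}$: set $w_0=0$ and $w_{k+1}=w_k+\alpha\,P(\chi-w_k)$ with the optimal damping $\alpha=2/(\lambda_{\min}+\lambda_{\max})$, where $\lambda_{\min},\lambda_{\max}$ are the extreme eigenvalues of $P$ on $\Phi^{\perp}$, both finite and strictly positive precisely because $\mathrm{cond}(P)<\infty$. As a finite sum of $a$-orthogonal projections onto subspaces of $\Phi^{\perp}$, the operator $P$ is self-adjoint and positive definite with respect to $a(\cdot,\cdot)$; hence the error $e_k=\chi-w_k=(I-\alpha P)^k\chi$ obeys $\|e_k\|\le\rho^k\|\chi\|$, the contraction factor being the standard spectral bound $\max_{\lambda\in[\lambda_{\min},\lambda_{\max}]}|1-\alpha\lambda|=\rho$.

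The crux is the support bookkeeping, and I expect it to be the main obstacle: I must show $w_k\in H_0^1(\Omega_i^k)\cap\Phi^{\perp}$ for every $k$, even though the fixed point $\chi$ is globally supported. The key is that the residual seen by $P$ is localized. Indeed, for any $w\in\Phi_{\hat{\dotlessi}}^{\perp}$ the orthogonality $a(\psi_i,w)=0$ yields $a(\chi-w_k,w)=-a(\psi_i^0+w_k,w)$; with $w_k$ supported in $\Omega_i^k$ and $\psi_i^0$ in $\Omega_i^0\subset\Omega_i^k$, this quantity vanishes unless $\omega_{\hat{\dotlessi}}\cap\Omega_i^k\neq\emptyset$, i.e. unless $\omega_{\hat{\dotlessi}}\subset\Omega_i^{k+1}$. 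Since each $P_{\hat{\dotlessi}}(\chi-w_k)$ is by construction supported in $\omega_{\hat{\dotlessi}}$, it follows that $P(\chi-w_k)$, and hence $w_{k+1}$, is supported in $\Omega_i^{k+1}$ — exactly the one-layer growth already noted for $P$. By induction $w_{\ell}\in H_0^1(\Omega_i^{\ell})\cap\Phi^{\perp}$ is admissible and satisfies $\|\chi-w_{\ell}\|\le\rho^{\ell}\|\chi\|$.

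Finally I would replace $\|\chi\|$ by $\|\psi_i^0\|$. Using $a(\psi_i,\chi)=0$ once more gives $\|\chi\|^2=\|\psi_i^0\|^2-\|\psi_i\|^2\le\|\psi_i^0\|^2$, so that, combining with the best-approximation reduction, $\|\psi_i^{\ell}-\psi_i\|\le\rho^{\ell}\|\chi\|\le\rho^{\ell}\|\psi_i^0\|$, which is the assertion. The only place where anything delicate happens is the support-propagation step, where the global corrector must be reached by strictly local corrections; everything else is the classical spectral contraction estimate for a symmetric positive-definite preconditioned iteration.
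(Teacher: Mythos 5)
Your proposal is correct and follows essentially the same route as the paper's proof in \ref{Sec:lemma subspace decomposition}: an optimally damped Richardson iteration driven by $P$ (with $\beta=2/(\lambda_{\min}+\lambda_{\max})$), the observation that each application of $P$ to the localized residual $\psi_i^0+w_k$ grows the support by one layer, the best-approximation/Galerkin-orthogonality property of $\psi_i^{\ell}$ to compare against the constructed iterate, and the Pythagorean identity $\|\psi_i^0\|^2=\|\psi_i\|^2+\|\psi_i^0-\psi_i\|^2$ to replace $\|\chi\|$ by $\|\psi_i^0\|$. The only cosmetic differences are the sign convention ($\chi=-\xi_i$) and that you state the best-approximation reduction up front rather than invoking the orthogonality at the end.
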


\subsubsection{Condition Number of P}

This subsection is dedicated to an analysis of the condition number of $P$, in particular, for case V, E and D.\\

\begin{lemma} \cite{Kornhuber2016e}\label{lemma:cond(P)}
Let $K_{max}$ be the smallest constant such that
\begin{equation}
\|\chi\|^2 \leq K_{max}\sum_{\hat{\dotlessi}\in \Ih} \|\chi_{\hat{\dotlessi}}\|^2
\end{equation}
holds for any $\chi = \sum_{\hat{\dotlessi}\in \Ih} \chi_{\hat{\dotlessi}}$ with $\chi_{\hat{\dotlessi}} \in \Phi_{\hat{\dotlessi}}^{\perp}$. Let $K_{min}$ be the largest number such that for any $\chi \in \Phi^{\perp} $, there exits a decomposition  $\chi = \sum_{\hat{\dotlessi}\in \Ih} \chi_{\hat{\dotlessi}}$ with $\chi_{\hat{\dotlessi}}\in \Phi_{\hat{\dotlessi}}^{\perp}$ such that
\begin{equation}
K_{min}\sum_{\hat{\dotlessi}\in \Ih}\|\chi_{\hat{\dotlessi}}\|^2\leq\|\chi\|^2.
\label{eqn:decomposition}
\end{equation}
The shape regularity of $\TH$ implies the existence of the overlapping number $n_{\max}$, namely, the maximum number of non-vanishing $\chi_i$ on any given element of $\TH$. We note that $n_{\max}$ depends on $\gamma$ and $d$. It holds true that 
	\begin{equation}
	K_{min} \leq \lambda_{min}(P), \quad  \lambda_{max}(P) \leq K_{max}\leq n_{max}.
	\end{equation}	
\end{lemma}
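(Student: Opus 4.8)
The plan is to recognize this as the classical two-sided spectral bound for an additive Schwarz (parallel subspace correction) operator, and to exploit that each $P_{\hat{\dotlessi}}$ is an $a$-orthogonal projection. The crucial preliminary identity is that, since $P_{\hat{\dotlessi}}$ is self-adjoint and idempotent with respect to $a(\cdot,\cdot)$, for every $v\in\Phi^{\perp}$ one has $a(P_{\hat{\dotlessi}}v,v)=a(P_{\hat{\dotlessi}}v,P_{\hat{\dotlessi}}v)=\|P_{\hat{\dotlessi}}v\|^{2}$, and hence
\begin{equation}
a(Pv,v)=\sum_{\hat{\dotlessi}\in\Ih}\|P_{\hat{\dotlessi}}v\|^{2}.
\label{eqn:energyidentity}
\end{equation}
Because $P$ is a sum of $a$-self-adjoint operators it is itself $a$-self-adjoint, so $\lambda_{min}(P)$ and $\lambda_{max}(P)$ are the extreme values of the Rayleigh quotient $a(Pv,v)/\|v\|^{2}$, and it suffices to bound \eqref{eqn:energyidentity} from above and below.

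For the upper bound $\lambda_{max}(P)\le K_{max}$, I would apply the defining inequality of $K_{max}$ to the specific decomposition $Pv=\sum_{\hat{\dotlessi}}P_{\hat{\dotlessi}}v$ (each summand lies in $\Phi_{\hat{\dotlessi}}^{\perp}$), obtaining $\|Pv\|^{2}\le K_{max}\sum_{\hat{\dotlessi}}\|P_{\hat{\dotlessi}}v\|^{2}=K_{max}\,a(Pv,v)$ via \eqref{eqn:energyidentity}. Combining with Cauchy--Schwarz $a(Pv,v)\le\|Pv\|\,\|v\|$ gives $\|Pv\|\le K_{max}\|v\|$, whence $\lambda_{max}(P)\le K_{max}$.

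For the lower bound $\lambda_{min}(P)\ge K_{min}$, I would take an arbitrary $v\in\Phi^{\perp}$ and choose the stable decomposition $v=\sum_{\hat{\dotlessi}}\chi_{\hat{\dotlessi}}$ guaranteed by the definition of $K_{min}$, so that $\sum_{\hat{\dotlessi}}\|\chi_{\hat{\dotlessi}}\|^{2}\le K_{min}^{-1}\|v\|^{2}$. Writing $\|v\|^{2}=\sum_{\hat{\dotlessi}}a(v,\chi_{\hat{\dotlessi}})=\sum_{\hat{\dotlessi}}a(P_{\hat{\dotlessi}}v,\chi_{\hat{\dotlessi}})$ (using $\chi_{\hat{\dotlessi}}\in\Phi_{\hat{\dotlessi}}^{\perp}$ together with the projection property) and applying Cauchy--Schwarz twice yields $\|v\|^{2}\le(\sum_{\hat{\dotlessi}}\|P_{\hat{\dotlessi}}v\|^{2})^{1/2}(\sum_{\hat{\dotlessi}}\|\chi_{\hat{\dotlessi}}\|^{2})^{1/2}$. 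Inserting the stable-decomposition bound and squaring gives $\|v\|^{2}\le K_{min}^{-1}\sum_{\hat{\dotlessi}}\|P_{\hat{\dotlessi}}v\|^{2}=K_{min}^{-1}a(Pv,v)$, i.e. $a(Pv,v)\ge K_{min}\|v\|^{2}$.

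Finally, for $K_{max}\le n_{max}$ I would use that the energy norm is additive over the coarse elements, $\|\cdot\|^{2}=\sum_{\tau\in\TH}\|\cdot\|_{a,\tau}^{2}$, and that on each $\tau$ at most $n_{max}$ of the $\chi_{\hat{\dotlessi}}$ are nonzero (their supports being the patches $\omega_{\hat{\dotlessi}}$). A Cauchy--Schwarz over that bounded index set gives $\|\sum_{\hat{\dotlessi}}\chi_{\hat{\dotlessi}}\|_{a,\tau}^{2}\le n_{max}\sum_{\hat{\dotlessi}}\|\chi_{\hat{\dotlessi}}\|_{a,\tau}^{2}$; summing over $\tau$ produces $\|\chi\|^{2}\le n_{max}\sum_{\hat{\dotlessi}}\|\chi_{\hat{\dotlessi}}\|^{2}$, so $K_{max}\le n_{max}$. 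The main obstacle is conceptual rather than computational: correctly extracting the energy identity \eqref{eqn:energyidentity} and matching the directions of the two abstract constants $K_{min},K_{max}$ to $\lambda_{min},\lambda_{max}$. The lower bound must invoke an existential stable splitting of $v$, whereas the upper bound must hold for the \emph{specific} splitting produced by the projections; keeping these roles straight is the only delicate point, the additivity of the energy norm over elements being immediate for the $\L$-noise form $\int_{\Omega}\kappa\nabla u\cdot\nabla v\dx$.
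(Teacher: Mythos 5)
Your proof is correct. The paper does not actually prove this lemma---it is imported verbatim from \cite{Kornhuber2016e}---and your argument is a faithful, self-contained reconstruction of the standard additive-Schwarz spectral bounds that the cited reference relies on: the energy identity $a(Pv,v)=\sum_{\hat{\dotlessi}}\|P_{\hat{\dotlessi}}v\|^2$, the $K_{max}$ bound applied to the particular splitting $Pv=\sum_{\hat{\dotlessi}}P_{\hat{\dotlessi}}v$, Lions' lemma for the lower bound via an existential stable splitting, and element-wise Cauchy--Schwarz with the overlap count for $K_{max}\le n_{max}$; all steps, including the additivity of $a(\cdot,\cdot)$ over elements of $\mathcal{T}_H$, are valid here.
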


For the existence of $K_{min}$, in the following, we will construct a specific decomposition for any $\chi \in \Phi^{\perp} $ satisfying \eqref{eqn:decomposition}. 

Note that 
\begin{equation}
\chi = \sum_{\hat{\dotlessi}\in \Ih} v_{\hat{\dotlessi}},
\label{eqn:predecompositon}
\end{equation}
with $v_{\hat{\dotlessi}}=\chi\eta_{\hat{\dotlessi}}$, forms a decomposition of $\chi\in\Phi^{\perp}$; however, $v_{\hat{\dotlessi}}$ does not necessarily belong to $\Phi_{\hat{\dotlessi}}^{\perp}$. We construct the following correction operator $\tilde{P}$ such that $ v_{\hat{\dotlessi}} - \tilde{P} v_{\hat{\dotlessi}} \in \Phi_{\hat{\dotlessi}}^{\perp}$. $\tilde{P}$ varies by the choice of measurement functions $\Phi$. 

The $0$-th layer patch $\Omega_i^0$ is the smallest subset of $\Omega$, consisting of simplices in $\Th$, such that $\mathrm{supp}(\phi_i) \subset \Omega_i^0$.
\begin{itemize}
	\item Case V: $\Omega_i^0= \tau_i$;
	\item Case E: $\Omega_i^0= \omega_{e_i}$, where $\omega_{e_i} := \cup\{\tau\in\TH: \tau\cap e_i\neq \emptyset\}$;
	\item Case D: $\Omega_i^0= \tau_i$ for $\tau_i \in \Phi_{\D}$ and  $\Omega_i^0= \omega_{e_i}$ for $e_i \in \Phi_{\D}$.
\end{itemize}
Let $\psi_i^0$ be the localized basis defined in \eqref{eqn:psi_i^0} with respect to $\Omega_i^0$ .

Let $\tilde{P}: H_0^1(\Omega) \rightarrow H_0^1(\Omega)$ be the linear operator defined by 
\begin{equation}
\tilde{P}v:= \sum_{i\in\I} \psi_i^0 [\phi_i, v],\ for\ v\in H_0^1(\Omega),
\label{eqn:tildeP}
\end{equation}
Although $\phi_i$ and $\psi_i^0$ varies by cases, noticing that $\tilde{P}v_{\hat{\dotlessi}} \in H_0^1(\omega_{\hat{\dotlessi}})$ and $[\psi^0_i,\phi_j] = \delta_{i,j}$, we have $v_{\hat{\dotlessi}}-\tilde{P}v_{\hat{\dotlessi}} \in \Phi_{\hat{\dotlessi}}^{\perp}$. Moreover, for $\chi\in \Phi^{\perp}$, we have $\tilde{P}\chi=0$. It follows that 
\begin{equation}
\chi=\chi-\tilde{P}\chi=\sum_{\hat{\dotlessi}\in \Ih}(v_{\hat{\dotlessi}}-\tilde{P}v_{\hat{\dotlessi}})=\sum_{\hat{\dotlessi}\in \Ih}\chi_{\hat{\dotlessi}},
\label{eqn:decomposition_construction}
\end{equation}
with $\chi_{\hat{\dotlessi}}:=v_{\hat{\dotlessi}}-\tilde{P}v_{\hat{\dotlessi}} \in \Phi^{\perp}_i$, which is the desired decomposition. The next three Lemmas are dedicated to prove that the decomposition is stable in the sense that there exists a constant $K_{min}$, only depending on $d, \gamma,  \kappa_{min}, \kappa_{max}$, such that \eqref{eqn:decomposition} is satisfied.

\begin{lemma}	\label{lemma:predecomposition}
	There exists a constant $C>0$, depending on $d, \gamma,  \kappa_{min}, \kappa_{max}$ only, such that for any $\chi \in \Phi^{\perp}$ the decomposition \eqref{eqn:predecompositon} satisfies 
	\begin{equation}
	\sum_{\hat{\dotlessi}\in \Ih}\|v_{\hat{\dotlessi}}\|^2\leq C \|\chi\|^2 .
	\end{equation}
	See \ref{Sec:lemma:predecomposition} for the proof.
\end{lemma}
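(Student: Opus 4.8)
The plan is to estimate each summand $\|v_{\hat{\dotlessi}}\|^2 = \int_\Omega \kappa\,|\nabla(\chi\eta_{\hat{\dotlessi}})|^2\dx$ directly via the Leibniz rule and then exploit two structural facts about the piecewise-linear partition of unity: that $\sum_{\hat{\dotlessi}}\eta_{\hat{\dotlessi}}^2\le 1$ pointwise (since $0\le\eta_{\hat{\dotlessi}}\le 1$ and $\sum_{\hat{\dotlessi}}\eta_{\hat{\dotlessi}}=1$), and that on each simplex only $d+1$ of the $\eta_{\hat{\dotlessi}}$ are active, each with gradient bounded by $C\gamma/H_\tau$ by shape regularity. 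First I would write $\nabla(\chi\eta_{\hat{\dotlessi}}) = \eta_{\hat{\dotlessi}}\nabla\chi + \chi\nabla\eta_{\hat{\dotlessi}}$ and apply Young's inequality to obtain
$$\|v_{\hat{\dotlessi}}\|^2 \le 2\int_\Omega\kappa\,\eta_{\hat{\dotlessi}}^2\,|\nabla\chi|^2\dx + 2\int_\Omega\kappa\,\chi^2\,|\nabla\eta_{\hat{\dotlessi}}|^2\dx.$$
Summing over $\hat{\dotlessi}\in\Ih$ and using $\sum_{\hat{\dotlessi}}\eta_{\hat{\dotlessi}}^2\le 1$ bounds the first group of terms by $2\|\chi\|^2$ immediately.

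The second group is where the argument has real content. Breaking the integral elementwise and using $|\nabla\eta_{\hat{\dotlessi}}|\le C\gamma/H_\tau$ on $\tau$ with at most $d+1$ nonzero contributions per simplex, I reduce $\sum_{\hat{\dotlessi}}\int_\Omega\kappa\,\chi^2|\nabla\eta_{\hat{\dotlessi}}|^2\dx$ to a bound of the form $C\kappa_{max}\sum_{\tau\in\TH}H_\tau^{-2}\|\chi\|_{L^2(\tau)}^2$. To absorb the factor $H_\tau^{-2}$ I invoke a local Poincar\'e-type inequality on each $\tau$, exactly as in the proof of Theorem \ref{thm:convergense of global}: because $\chi\in\Phi^{\perp}$, the orthogonality constraints force a vanishing mean on $\tau$. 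In case V this is the zero volume mean $\int_\tau\chi\dx=0$ and the standard Poincar\'e inequality yields $\|\chi\|_{L^2(\tau)}\le C H_\tau\|\nabla\chi\|_{L^2(\tau)}$; in case E the constraints $\int_e\chi\ds=0$ for every edge $e\subset\partial\tau$ give $\int_{\partial\tau}\chi=0$, so Lemma \ref{lem:zero_mean} applies; case D contains the edge constraints and is handled identically. In every case $H_\tau^{-2}\|\chi\|_{L^2(\tau)}^2\le C\|\nabla\chi\|_{L^2(\tau)}^2$.

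Summing these local estimates over $\tau$ and converting back to the energy norm through $\|\nabla\chi\|_{L^2(\Omega)}^2\le\kappa_{min}^{-1}\|\chi\|^2$ bounds the second group by $C(\kappa_{max}/\kappa_{min})\|\chi\|^2$. Combining the two contributions gives $\sum_{\hat{\dotlessi}}\|v_{\hat{\dotlessi}}\|^2\le C\|\chi\|^2$ with $C$ depending only on $d,\gamma,\kappa_{min},\kappa_{max}$, as claimed; note that the measurement scaling factors $c_\tau,c_e$ play no role here, since multiplying a measurement function by a positive constant does not alter the kernel $\Phi^{\perp}$. I expect the only delicate point to be checking that the orthogonality condition supplies the correct mean-zero hypothesis for the applicable Poincar\'e inequality in each of the three measurement cases — the remainder is the routine product-rule-plus-finite-overlap computation.
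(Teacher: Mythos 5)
Your proof is correct and follows essentially the same route as the paper's: the product rule on $v_{\hat{\dotlessi}}=\chi\eta_{\hat{\dotlessi}}$, the gradient bound $|\nabla\eta_{\hat{\dotlessi}}|\lesssim H^{-1}$ from shape regularity, the finite-overlap count, and a Poincar\'e-type inequality that exploits $\chi\in\Phi^{\perp}$ in each of the three measurement cases. The only cosmetic difference is that you apply the mean-zero (resp.\ zero boundary mean) Poincar\'e inequalities locally on each simplex to absorb $H_\tau^{-2}$, whereas the paper sums first and invokes the global inequality \eqref{eqn:poincare} established in the proof of Theorem \ref{thm:convergense of global} --- the underlying case-by-case verification is identical.
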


\begin{lemma}	\label{lemma:stability}
Let $\tilde{P}$ be defined as \eqref{eqn:tildeP},  there exists a constant $C$, depending on $d, \gamma,  \kappa_{min}, \kappa_{max}$ only, such that for any $\chi\in \Phi^{\perp}$  
\begin{equation}
	\sum_{\hat{\dotlessi}\in \Ih}\|\tilde{P}v_{\hat{\dotlessi}}\|^2\leq C \|\chi\|^2
\end{equation}
with $v_{\hat{\dotlessi}} = \chi \eta_{\hat{\dotlessi}}$, furthermore, there exists a  decomposition $\chi = \sum_{\hat{\dotlessi}\in \Ih} \chi_{\hat{\dotlessi}} $ for any $\chi \in \Phi^{\perp}$ with $\chi_{\hat{\dotlessi}}\in \Phi_{\hat{\dotlessi}}^{\perp}$ such that 
\begin{equation}
	\sum_{\hat{\dotlessi}\in \Ih}\|\chi_{\hat{\dotlessi}}\|^2\leq 2C \|\chi\|^2 .
\end{equation}
See \ref{sec:lemma:stability} for the proof.
\end{lemma}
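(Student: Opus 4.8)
The plan is to prove the first inequality $\sum_{\hat{\dotlessi}\in\Ih}\|\tilde{P}v_{\hat{\dotlessi}}\|^2\leq C\|\chi\|^2$ and then read off the stable decomposition as a corollary. Indeed, the constructed pieces are $\chi_{\hat{\dotlessi}}=v_{\hat{\dotlessi}}-\tilde{P}v_{\hat{\dotlessi}}$, so the elementary bound $\|\chi_{\hat{\dotlessi}}\|^2\leq 2\|v_{\hat{\dotlessi}}\|^2+2\|\tilde{P}v_{\hat{\dotlessi}}\|^2$, combined with Lemma \ref{lemma:predecomposition} (which controls $\sum_{\hat{\dotlessi}}\|v_{\hat{\dotlessi}}\|^2$) and the first inequality, yields $\sum_{\hat{\dotlessi}}\|\chi_{\hat{\dotlessi}}\|^2\leq 2C\|\chi\|^2$ after collecting constants. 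Hence the entire burden is the first estimate, and I would spend the proof on it.

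To bound a single term I would begin from the definition $\tilde{P}v_{\hat{\dotlessi}}=\sum_{j\in\I}\psi_j^0\,[\phi_j,v_{\hat{\dotlessi}}]$ and note that $v_{\hat{\dotlessi}}=\chi\eta_{\hat{\dotlessi}}$ is supported in $\omega_{\hat{\dotlessi}}$, so $[\phi_j,v_{\hat{\dotlessi}}]=0$ unless $\mathrm{supp}(\phi_j)\cap\omega_{\hat{\dotlessi}}\neq\emptyset$. Shape regularity bounds the number of such indices $j$ by a constant $n_0=n_0(\gamma,d)$. The triangle inequality and Cauchy--Schwarz over this finite set then give
\[
\|\tilde{P}v_{\hat{\dotlessi}}\|^2\leq n_0\!\!\sum_{j:\,\mathrm{supp}(\phi_j)\cap\omega_{\hat{\dotlessi}}\neq\emptyset}\!\!|[\phi_j,v_{\hat{\dotlessi}}]|^2\,\|\psi_j^0\|^2,
\]
so that it remains to control the local energy $\|\psi_j^0\|$ and the measurement $|[\phi_j,v_{\hat{\dotlessi}}]|$. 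The former is exactly the content of Lemma \ref{lemma:psi_i^0}, which I would invoke to absorb the scaling factors $c_\tau$, $c_e$; indeed those factors are chosen precisely so that the product of $\|\psi_j^0\|^2$ with the squared measurement below is independent of $H$.

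The measurement bound splits by case. For the volume measurements (case V and the volume part of case D), Cauchy--Schwarz on $\tau_j$ gives $|[\phi_{\tau_j},v_{\hat{\dotlessi}}]|\leq c_{\tau_j}|\tau_j|^{1/2}\|\chi\eta_{\hat{\dotlessi}}\|_{L^2(\tau_j)}$. For the edge measurements (case E and the edge part of case D), I would first apply Cauchy--Schwarz on $e_j$ to get $|[\phi_{e_j},v_{\hat{\dotlessi}}]|\leq c_{e_j}|e_j|^{1/2}\|\chi\eta_{\hat{\dotlessi}}\|_{L^2(e_j)}$ and then pass from the edge to an adjacent simplex by the trace inequality of Lemma \ref{lem:trace}, bounding $\|\chi\eta_{\hat{\dotlessi}}\|_{L^2(e_j)}$ by $\|\chi\eta_{\hat{\dotlessi}}\|_{L^2(\tau)}+H_\tau\|\nabla(\chi\eta_{\hat{\dotlessi}})\|_{L^2(\tau)}$. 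In both cases a local Poincar\'e inequality on $\omega_{\hat{\dotlessi}}$ trades the $L^2$ factor for $H$ times an $H^1$ seminorm, and uniform ellipticity ($\kappa_{min}\leq\kappa\leq\kappa_{max}$) relates the local $H^1$ norm of $\chi\eta_{\hat{\dotlessi}}$ to $\|v_{\hat{\dotlessi}}\|$. Tracking the powers of $H$, $|\tau|$ and $|e|$, one verifies that the scaling constants cancel, giving $|[\phi_j,v_{\hat{\dotlessi}}]|^2\|\psi_j^0\|^2\leq C\|v_{\hat{\dotlessi}}\|^2$ with $C=C(d,\gamma,\kappa_{min},\kappa_{max})$.

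Finally I would sum the per-term estimate over the $n_0$ relevant $j$ and then over $\hat{\dotlessi}\in\Ih$; using the finite overlap and Lemma \ref{lemma:predecomposition} this produces $\sum_{\hat{\dotlessi}}\|\tilde{P}v_{\hat{\dotlessi}}\|^2\leq C\sum_{\hat{\dotlessi}}\|v_{\hat{\dotlessi}}\|^2\leq C\|\chi\|^2$, which is the first claim, and the triangle inequality above then closes the \emph{furthermore} part. I expect the main obstacle to be the edge case, and correspondingly the edge part of case D: there one must marry the trace inequality with the edge scaling $c_e=|e|^{(2-d)/(2(d-1))}$ and confirm that every $H$-dependent factor cancels uniformly in the mesh, whereas the volume case is comparatively routine. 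Some care is also needed so that the Poincar\'e and trace constants depend only on $\gamma$ and $d$ through shape regularity rather than on the individual element.
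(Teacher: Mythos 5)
Your proposal follows essentially the same route as the paper's proof: the per-patch stability bound $\|\tilde{P}v_{\hat{\dotlessi}}\|\leq C\|v_{\hat{\dotlessi}}\|$ obtained from Cauchy--Schwarz (plus the trace inequality of Lemma \ref{lem:trace} and Poincar\'e for the edge measurements), the local energy bound $\|\psi_j^0\|\leq CH^{-1}$ from Lemma \ref{lemma:psi_i^0}, finite overlap from shape regularity, summation via Lemma \ref{lemma:predecomposition}, and the triangle inequality to deduce the stable decomposition. The argument is correct and matches the paper's in all essential steps.
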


\begin{lemma}\label{lemma:psi_i^0}
For case V, case E and case D,
it holds true that 
\begin{equation}
\|\psi^0_j\| \leq C_1 H^{-1}  \text{ for any } j\in \I,
\end{equation} 
where $C_1$ only depends on $\gamma$, $\kappa_{max}$ and $d$. See \ref{sec:lemma:psi_i^0} for the proof.
\end{lemma}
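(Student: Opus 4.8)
The plan is to exploit the variational (energy-minimizing) definition of $\psi_j^0$ in \eqref{eqn:psi_i^0}: since $\psi_j^0$ minimizes $a(v,v)$ over the affine feasible set $\{v\in H_0^1(\Omega_j^0): [v,\phi_i]=\delta_{j,i},\ \forall i\in\I\}$, its norm is bounded by that of \emph{any} admissible competitor. Thus it suffices to exhibit, for each case, one explicit $w_j\in H_0^1(\Omega_j^0)$ satisfying all the interpolation constraints and with $\|\nabla w_j\|_{L^2}\leq C H^{-1}$; then $\|\psi_j^0\|\leq\|w_j\|\leq\kappa_{max}^{1/2}\|\nabla w_j\|_{L^2}\leq C_1 H^{-1}$, which makes $C_1$ depend only on $\kappa_{max}$ together with the geometric constants $\gamma$ and $d$ entering $\|\nabla w_j\|$, and never on $\kappa_{min}$.

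First I would build the competitors from standard simplicial bubble functions. For Case V, $\Omega_j^0=\tau_j$ and the only nontrivial constraint is $[w_j,\phi_{\tau_j}]=1$; the volume bubble $b_{\tau_j}$ (the product of the barycentric coordinates of $\tau_j$), which vanishes on $\partial\tau_j$, already satisfies $[w_j,\phi_{\tau'}]=0$ for every $\tau'\neq\tau_j$ because its support is a single simplex, so I only need to rescale it to meet the one remaining constraint. For Case E, $\Omega_j^0=\omega_{e_j}$ and I would use the edge bubble $b_{e_j}$ (the product of the barycentric coordinates of the two endpoints of $e_j$, extended across the two simplices sharing $e_j$): it is continuous, vanishes on $\partial\omega_{e_j}$, has a strictly positive integral over $e_j$, and vanishes on every other coarse face, so after rescaling it meets $[w_j,\phi_{e_j}]=1$ and $[w_j,\phi_{e'}]=0$ for all $e'\neq e_j$.

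The genuinely delicate construction is Case D, where the edge basis must satisfy both the edge constraints \emph{and} the volume constraints $[w_j,\phi_\tau]=0$ for the two simplices of $\omega_{e_j}$. Here I would start from the rescaled edge bubble and then correct it with interior volume bubbles, $w_j=\beta\,b_{e_j}-\alpha_1 b_{\tau_1}-\alpha_2 b_{\tau_2}$, choosing $\alpha_1,\alpha_2$ to annihilate the two volume integrals; because each $b_{\tau_i}$ vanishes on $\partial\tau_i\supset e_j$ and on $\partial\omega_{e_j}$, the correction affects neither the edge integral on $e_j$ nor any other constraint, so feasibility is preserved. (The Case D volume basis coincides with Case V, since a bubble on $\tau$ already annihilates all edge measurements on $\partial\tau$.)

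The remaining and main quantitative obstacle is the energy estimate $\|\nabla w_j\|_{L^2}\leq C H^{-1}$ with a constant uniform in $H$. I would obtain this by an affine change of variables to the reference simplex: on the reference element the bubbles and their gradients are fixed $O(1)$ quantities, and shape regularity ($\max_\tau H_\tau/\rho_\tau\leq\gamma$) transfers this to $\tau$ with the standard factors $\int_\tau b\sim|\tau|$, $\int_e b\sim|e|$ and $\|\nabla b\|_{L^2(\tau)}^2\sim|\tau|H_\tau^{-2}$. The scaling constants $\beta,\alpha_i$ each carry one factor of the measurement normalization $c_\tau$ or $c_e$, and the point is that these factors are calibrated exactly so that $\beta^2\|\nabla b\|^2$ collapses to $O(H^{-2})$ independently of the dimension; for the edge bubble this reduces to the identity $c_e^2|e|^2\sim H^d$ combined with $\|\nabla b_{e_j}\|^2\sim H^{d-2}$. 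The main work is therefore bookkeeping the powers of $|\tau|$, $|e|$ and $H$ through these scalings (and, in Case D, checking that the corrections $\alpha_i b_{\tau_i}$, for which $\alpha_i\sim\beta$, contribute the same $O(H^{-1})$ order rather than a larger one), after which collecting the $\gamma$- and $d$-dependent geometric constants gives the stated $C_1$.
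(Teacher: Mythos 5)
Your proposal is correct and follows essentially the same route as the paper: both arguments exhibit an explicit admissible competitor in $H_0^1(\Omega_j^0)$ with energy $O(H^{-1})$ obtained by scaling from a reference configuration, and then invoke the minimality of $\psi_j^0$ in \eqref{eqn:psi_i^0}. The only difference is cosmetic --- you use polynomial volume/edge bubbles where the paper pulls back an abstract reference-element minimizer $\hat{\psi}_{\Omega_{ref}}$ --- and your explicit treatment of the Case~D correction $\beta\,b_{e_j}-\alpha_1 b_{\tau_1}-\alpha_2 b_{\tau_2}$ actually supplies the detail the paper dismisses with ``Case D can be proved similarly.''
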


\begin{theorem}
\label{thm:decay}
	 It holds that 
	\begin{equation}
	\|\psi_i-\psi_i^{\ell}\| \leq C_1 H^{-1} e^{-{\ell}/C_2}.
	\end{equation}
\end{theorem}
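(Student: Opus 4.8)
The plan is to combine the exponential-decay iteration bound from Lemma \ref{prop:iteration} with the uniform basis bound from Lemma \ref{lemma:psi_i^0}, after first establishing that $\mathrm{cond}(P)$ is bounded by a constant depending only on $d,\gamma,\kappa_{min},\kappa_{max}$. The three preceding Lemmas (\ref{lemma:predecomposition}, \ref{lemma:stability}, and \ref{lemma:cond(P)}) are precisely the pieces needed for this: Lemma \ref{lemma:cond(P)} reduces bounding $\mathrm{cond}(P)=\lambda_{max}(P)/\lambda_{min}(P)$ to controlling $K_{max}$ and $K_{min}$, with $\lambda_{max}(P)\le K_{max}\le n_{max}$ giving the upper eigenvalue bound immediately from the finite overlap of the partition of unity.

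First I would fix the constant $K_{min}$. By Lemma \ref{lemma:stability}, the explicitly constructed decomposition $\chi=\sum_{\hat{\dotlessi}}\chi_{\hat{\dotlessi}}$ with $\chi_{\hat{\dotlessi}}\in\Phi_{\hat{\dotlessi}}^{\perp}$ satisfies $\sum_{\hat{\dotlessi}}\|\chi_{\hat{\dotlessi}}\|^2\le 2C\|\chi\|^2$, which by the definition of $K_{min}$ in Lemma \ref{lemma:cond(P)} yields $K_{min}\ge (2C)^{-1}$, a positive constant depending only on $d,\gamma,\kappa_{min},\kappa_{max}$. Combined with $K_{max}\le n_{max}$, this gives
\begin{equation}
\mathrm{cond}(P)=\frac{\lambda_{max}(P)}{\lambda_{min}(P)}\le \frac{K_{max}}{K_{min}}\le 2C\,n_{max}=:\Lambda<\infty,
\end{equation}
a finite constant independent of $H$ and of the index $i$. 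This is the crucial input that makes Lemma \ref{prop:iteration} applicable.

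Next I would invoke Lemma \ref{prop:iteration}, which under the now-verified hypothesis $\mathrm{cond}(P)<\infty$ gives
\begin{equation}
\|\psi_i-\psi_i^{\ell}\|\le\Big(\frac{\mathrm{cond}(P)-1}{\mathrm{cond}(P)+1}\Big)^{\ell}\|\psi_i^0\|.
\end{equation}
Writing $\rho:=(\Lambda-1)/(\Lambda+1)<1$, the geometric factor satisfies $\rho^{\ell}=e^{\ell\log\rho}=e^{-\ell/C_2}$ with $C_2:=-1/\log\rho=1/\log\frac{\Lambda+1}{\Lambda-1}>0$, a constant depending only on $d,\gamma,\kappa_{min},\kappa_{max}$. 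Finally I would substitute the uniform bound $\|\psi_i^0\|\le C_1H^{-1}$ from Lemma \ref{lemma:psi_i^0}, which holds for all three cases V, E, D, to conclude $\|\psi_i-\psi_i^{\ell}\|\le C_1H^{-1}e^{-\ell/C_2}$, exactly the claimed estimate.

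The main obstacle is really consolidated into the preceding Lemmas, so the proof of the theorem itself is a short assembly; the genuinely delicate work is the positive lower bound $K_{min}>0$ uniform in $H$, established in Lemma \ref{lemma:stability}, since this is where the stability of the truncated partition-of-unity decomposition against the correction operator $\tilde P$ must be controlled using the trace inequality (Lemma \ref{lem:trace}), the zero-mean Poincar\'e inequality (Lemma \ref{lem:zero_mean}), and the scaling factors $c_\tau,c_e$ built into the measurement functions. Within the theorem's own proof, the only point requiring minor care is checking that the constants $C_1,C_2$ are genuinely $H$-independent and case-independent, which follows because $n_{max}$, $C$, and the bound in Lemma \ref{lemma:psi_i^0} all depend only on $d,\gamma,\kappa_{min},\kappa_{max}$.
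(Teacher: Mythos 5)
Your proposal is correct and follows exactly the same route as the paper's own (very terse) proof: Lemma \ref{lemma:cond(P)} together with Lemma \ref{lemma:stability} bounds $\mathrm{cond}(P)$ by a constant depending only on $d,\gamma,\kappa_{min},\kappa_{max}$, after which Lemma \ref{prop:iteration} and Lemma \ref{lemma:psi_i^0} assemble the claimed estimate. Your write-up is in fact more explicit than the paper's about how the geometric ratio is converted into the constant $C_2$ and about the uniformity of the constants in $H$ and $i$.
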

\begin{proof}
	Lemma \ref{lemma:cond(P)} and \ref{lemma:stability} 
	imply that there exists a constant $C'$ depending on  $\gamma, d, \kappa_{max}, \kappa_{min}$ only, such that $1/C' \leq K_{min} \leq \lambda_{min}(P)$. 
	According to Lemma \ref{lemma:cond(P)}, $\mathrm{cond}(P)$ has an upper bound $C_2$  depending on $\gamma, d, \kappa_{max}, \kappa_{min}$ only. Combining Lemma \ref{prop:iteration} and Lemma \ref{lemma:psi_i^0}, we draw the conclusion.
\end{proof}

\subsection{Accuracy of Localized Basis}
\label{sec:analysis:localbasis}

Due to the exponential decay of the truncation error, we can use the localized basis $\psi_i^{\ell}$ instead of the global basis $\psi_i$ to reduce computational cost. The following theorem shows that we can preserve the $O(H)$ convergence rate for the global basis in Theorem \ref{thm:convergense of global} if the localization level (number of layers in the localization patch) ${\ell} \simeq O(\log(1/H))$.
\begin{theorem}
\label{thm:local}
	Let $u_H^{\ell}\in \Psi^{\ell} $ be the solution to \eqref{eqn:local_varellpFEM}. For ${\ell}\geq C_2\log(1/H)$ we have 
	\begin{equation}
	\|u-u_H^{\ell}\| \leq CH \|g\|_{L^2(\Omega)},
	\end{equation}
	where C depends on $\kappa_{min},\kappa_{max},d,\Omega$, and $\gamma$.
\end{theorem}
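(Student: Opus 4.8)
The plan is to start from the triangle inequality already displayed in the introduction to Section~\ref{sec:analysis:localbasis},
\[
\|u - u_H^{\ell}\| \leq \|u - u_H\| + \|u_H - u_H^{\ell}\|,
\]
and bound the two terms separately. The first term is handled immediately by Theorem~\ref{thm:convergense of global}, which gives $\|u - u_H\| \leq \kappa_{min}^{-1} C H \|g\|_{L^2(\Omega)} = \mathcal{O}(H)\|g\|_{L^2(\Omega)}$. So the entire burden falls on estimating the localization discrepancy $\|u_H - u_H^{\ell}\|$ between the global Galerkin solution and the localized one.

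For the second term, the idea is to expand both solutions in their respective bases and exploit the exponential decay from Theorem~\ref{thm:decay}. Writing $u_H = \sum_{i\in\I} m_i \psi_i$ and $u_H^{\ell} = \sum_{i\in\I} m_i^{\ell} \psi_i^{\ell}$ with $m_i = [u,\phi_i]$, I would first control the error committed by replacing each global basis $\psi_i$ by its localized counterpart $\psi_i^{\ell}$. Theorem~\ref{thm:decay} gives $\|\psi_i - \psi_i^{\ell}\| \leq C_1 H^{-1} e^{-\ell/C_2}$ per basis function. The natural route is a Cea/quasi-optimality argument: since $u_H^{\ell}$ is the $a$-orthogonal projection of $u$ (equivalently of $u_H$) onto $\Psi^{\ell}$, one has $\|u_H - u_H^{\ell}\| \leq \|u_H - w\|$ for any convenient $w \in \Psi^{\ell}$, and I would choose $w = \sum_{i\in\I} m_i \psi_i^{\ell}$, so that
\[
\|u_H - w\| = \Big\|\sum_{i\in\I} m_i (\psi_i - \psi_i^{\ell})\Big\| \leq \sum_{i\in\I} |m_i|\,\|\psi_i - \psi_i^{\ell}\|.
\]
The decay factor $e^{-\ell/C_2}$ is uniform in $i$, so it can be pulled out of the sum, leaving a sum over $|m_i|$ together with the $H^{-1}$ factor.

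The main obstacle will be bounding $\sum_{i\in\I}|m_i|$ (or, after Cauchy--Schwarz, $(\sum_i |m_i|^2)^{1/2}$ times $(\#\I)^{1/2}$) in terms of $\|g\|_{L^2(\Omega)}$ with the right powers of $H$, so that the stray $H^{-1}$ and the factor $N = \#\I \sim H^{-d}$ coming from the number of basis functions are absorbed by the exponential $e^{-\ell/C_2}$. The key estimates here are the orthonormality-type relations $[\psi_i,\phi_j]=\delta_{i,j}$, the stability of the measurement functionals (the scaling factors $c_\tau = \sqrt{|\tau|}$ and $c_e = |e|^{(2-d)/(2(d-1))}$ were chosen precisely to make $\|\phi_i\|$ balanced across scales), and the a~priori bound $\|u\|_{H_0^1} \leq \kappa_{min}^{-1}\|g\|_{L^2(\Omega)}$. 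One expects $|m_i| = |[u,\phi_i]| \lesssim \|u\|_{L^2(\mathrm{supp}\,\phi_i)}$ up to the scaling factor, and summing the squares recovers a global $L^2$ (or $H_0^1$) norm of $u$ controlled by $\|g\|_{L^2(\Omega)}$. Collecting all polynomial-in-$H$ factors, the localization error takes the form $\|u_H - u_H^{\ell}\| \leq C H^{-p} e^{-\ell/C_2}\|g\|_{L^2(\Omega)}$ for some fixed power $p$; then the hypothesis $\ell \geq C_2 \log(1/H)$ (enlarging the constant $C_2$ to swallow $p$ and the logarithmic factors if necessary) forces $H^{-p} e^{-\ell/C_2} \lesssim H$, which matches the $\mathcal{O}(H)$ rate of the first term and completes the proof.
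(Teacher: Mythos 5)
Your proposal is correct in outline and shares the skeleton of the paper's argument (the triangle inequality through the comparison element $w=\sum_{i}m_i\psi_i^{\ell}$, Theorem \ref{thm:convergense of global} for the first term, Theorem \ref{thm:decay} together with the measurement stability $\sum_i m_i^2\le C\|u\|^2$ of \eqref{eqn: coef stable} for the second), but it diverges at the one step the paper treats with care: how the per-basis truncation errors are summed. You bound $\bigl\|\sum_i m_i(\psi_i-\psi_i^{\ell})\bigr\|$ by the triangle inequality and Cauchy--Schwarz, paying a factor $N^{1/2}\sim H^{-d/2}$ from the number of basis functions, and then absorb the resulting $H^{-1-d/2}$ by enlarging the constant in $\ell\ge C_2\log(1/H)$; this is legitimate and yields the theorem as stated, since the threshold constant is generic. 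The paper instead sets $\chi=\sum_i m_i(\psi_i-\psi_i^{\ell})$, splits it as $\chi=\sum_{\hat\imath}\chi_{\hat\imath}$ using the stable decomposition of Lemma \ref{lemma:stability}, and observes that $(\chi_{\hat\imath},\psi_i-\psi_i^{\ell})=0$ unless $\omega_{\hat\imath}$ straddles the boundary of $\Omega_i^{\ell}$; counting those surviving pairs and applying Young's inequality gives $\|\chi\|^2\le C C_{ol}^2\sum_i m_i^2\|\psi_i-\psi_i^{\ell}\|^2$ with $C_{ol}\sim\ell^{d-1}$, i.e.\ only a polylogarithmic-in-$H$ overhead in place of your $H^{-d}$. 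Both losses are swallowed by the exponential decay, so the difference only affects how much the constant multiplying $\log(1/H)$ must grow. One genuine, though repairable, slip: $u_H^{\ell}$ is the $a$-orthogonal projection of $u$ onto $\Psi^{\ell}$, not of $u_H$ --- since $\Psi^{\ell}\not\subset\Psi$ one has $a(u_H-u_H^{\ell},v)=a(u_H-u,v)\ne 0$ for $v\in\Psi^{\ell}$ in general --- so the quasi-optimality must be invoked as $\|u-u_H^{\ell}\|\le\|u-w\|\le\|u-u_H\|+\|u_H-w\|$ rather than via $\|u_H-u_H^{\ell}\|\le\|u_H-w\|$; the final estimate is unchanged.
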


\begin{proof}
	Let $u_{\psi}^{\ell}=\Sigma_{i=1}^N m_i \psi_i^{\ell}$, where $m_i=[\phi_i,u],\ for \ i=1,...,N$. Recalling that $u_H = \Sigma_{i=1}^N m_i \psi_i$, we have 
	\begin{equation}
	\|u-u_{\psi}^{\ell}\| \leq \|u-u_H\|+\|u_H-u_{\psi}^{\ell}\|.
	\end{equation}
	Theorem \ref{thm:convergense of global} implies that$\|u-u_H\|\leq CH\|g\|_{L^2}$. To derive an estimate of the second term, let $\chi=u_H-u_{\psi}^{\ell}$. As $\chi \in \Phi^{\perp}$, it can be decomposed as 
	\begin{equation}
		\chi=\sum_{\hat{\dotlessi}\in \Ih}\chi_{\hat{\dotlessi}},
	\end{equation}
	with $\chi_{\hat{\dotlessi}}:=v_{\hat{\dotlessi}}-\tilde{P}v_{\hat{\dotlessi}} \in \Phi^{\perp}_{\hat{\dotlessi}}$ as in \eqref{eqn:decomposition_construction}. By Lemma \ref{lemma:stability}, it follows that 
	\begin{equation}
	\sum_{\hat{\dotlessi}\in \Ih}\|\chi_{\hat{\dotlessi}}\|^2\leq C \|\chi\|^2,
	\label{eqn:stable_decom}
	\end{equation}
	where $C$ depends on $d, \gamma,  \kappa_{min}, \kappa_{max}$ only.
	Note that 
	\begin{equation}
	 \|\chi\|^2 = \left(\sum_{\hat{\dotlessi}\in \Ih}\chi_{\hat{\dotlessi}},  \sum_{i\in\I} m_i (\psi_i-\psi_i^{\ell}) \right).
	 \label{eqn:product}
	\end{equation}
	$\chi_{\hat{\dotlessi}} \in \Phi^{\perp}_{\hat{\dotlessi}} $ implies $(\chi_{\hat{\dotlessi}}, \psi_i)=0$ and $(\chi_{\hat{\dotlessi}}, \psi_i^{\ell})=0$ for $\omega_{\hat{\dotlessi}} \subset  \Omega_i^{\ell}$. Therefore,	for any pair $(\hat{\dotlessi},i)\in \{(\hat{\dotlessi},i) \in \Ih \times \I| \omega_{\hat{\dotlessi}} \subset  \Omega_i^{\ell} \text{ or } \omega_{\hat{\dotlessi}} \cap  \Omega_i^{\ell}=\emptyset\} $, it holds that
	$$
	(\chi_{\hat{\dotlessi}},\psi_i-\psi_i^{\ell})=0.
	$$ 
     For any nonzero term, by Young's inequality, we have 
	\begin{equation}	
	m_i(\chi_{\hat{\dotlessi}},\psi_i-\psi_i^{\ell}) \leq \frac{1}{2}((C{C_{ol}})^{-1}\|\chi_{\hat{\dotlessi}}\|^2+CC_{ol} m_i^2\|\psi_i-\psi_i^{\ell}\|^2)
	\label{eqn:nonzero}
	\end{equation}
	We note that the number of nonzero terms in \eqref{eqn:product}, for each $\chi_{\hat{\dotlessi}}$ or $\psi_i-\psi_i^{\ell}$, is bounded by a constant $C_{ol}\sim {\ell}^{(d-1)}$ depending on the shape regularity. Combining \eqref{eqn:stable_decom}, \eqref{eqn:product} and \eqref{eqn:nonzero}, it follows that
	\begin{equation}
	\begin{aligned}
	\|\chi\|^2 = &\sum_{\omega_{\hat{\dotlessi}} \not\subset  \Omega_i^{\ell}\text{ and }\omega_{\hat{\dotlessi}} \cap  \Omega_i^{\ell} \neq \emptyset} m_i(\chi_{\hat{\dotlessi}},\psi_i-\psi_i^{\ell})\\
	\leq & \sum_{\omega_{\hat{\dotlessi}} \not\subset  \Omega_i^{\ell}\text{ and }\omega_{\hat{\dotlessi}} \cap  \Omega_i^{\ell} \neq \emptyset} \frac{1}{2}((C{C_{ol}})^{-1}\|\chi_{\hat{\dotlessi}}\|^2+CC_{ol} m_i^2\|\psi_i-\psi_i^{\ell}\|^2) \\
	\leq & \sum_{\hat{\dotlessi}\in \Ih }C_{ol} \frac{1}{2}((C{C_{ol}})^{-1}\|\chi_{\hat{\dotlessi}}\|^2)+ \sum_{i\in \I }C_{ol}(CC_{ol} m_i^2\|\psi_i-\psi_i^{\ell}\|^2)\\
	\leq & \frac{1}{2}\|\chi\|^2 + \sum_{i\in \I}CC_{ol}^2  m_i^2\|\psi_i-\psi_i^{\ell}\|^2.
	\end{aligned}
	\end{equation}
	Hence 
	\begin{equation}
	    \|\chi\|^2 \leq 2\sum_{i\in \I}CC_{ol}^2  m_i^2\|\psi_i-\psi_i^{\ell}\|^2.
	    	\label{eqn:temp1}
	\end{equation}

	By Lemma \ref{lem:trace} and the Poincar\'{e} inequality, it suffices to show 
	\begin{equation}
	\sum_{i \in \I} m_{i}^2 =\sum_{i\in \I} [\phi_i,u]^2\leq C \|u\|^2,
	\label{eqn: coef stable}
	\end{equation}
	for different cases. We have
	
	\begin{itemize}
	    \item Case V:  $\sum_{i\in \I}  [\phi_i,u]^2 \leq \sum_{i\in \I}  \|\phi_i\|_{L^2}^2 \|u\|_{L^2(\tau_i)}^2 \leq \sum_{i\in \I}  \|u\|_{L^2(\tau_i)}^2 = \|u\|^2$.
	    \item Case E: $\sum_{i\in \I}  [\phi_i,u] ^2 = \sum_{i\in \I}(\int_{e_i}|e_i|^{\frac{2-d}{2(d-1)}} u \ds)^2 
	\leq \sum_{i\in \I} CH \|u\|_{L^2(e_i)}^2
	\leq C \sum_{i\in \I}  \|u\|_{L^2(\tau_i)}^2 \leq C \|u\|^2$, where $C$ depends on $d$, $\Omega$, and $\gamma$ only.
	    \item Case D: a combination of case V and case E.
	\end{itemize}

	Plugging \eqref{eqn: coef stable} into \eqref{eqn:temp1} and applying Theorem \ref{thm:decay}, we have
	\begin{equation}
	\|\chi\|^2 \leq 2  CC_{ol}^2 e^{-{\ell}/C_2}H^{-1}\|g\|_{L^2}^2.
	\end{equation}
	For ${\ell}\geq C_2\log(1/H)$, we have $\|\chi\| \leq CH \|g\|_{L^2(\Omega)}$, where $C$ only depends on $\kappa_{min},\kappa_{max},d,\Omega$, and $\gamma$.
\end{proof}

\section{Numerics}
\label{sec:numerics}

In this section, we justify our theoretical results through a few examples. We demonstrate the localization property of GRPS basis and the convergence of localized GRPS for benchmark problems with multiscale coefficients. Furthermore, we validate the GRPS method for wave equations in heterogeneous media. 




\subsection{Multiscale Trigonometric Example} 
\label{sec:numerics:mstrig}

The multiscale trigonometric (mstrig) coefficient $\kappa(x_1,x_2)$ is given by,

\begin{multline}
\kappa(x_1,x_2):=\frac{1}{6}(\frac{1.1+\sin(2\pi x_1/\epsilon_1)}{1.1+\sin(2\pi x_2/\epsilon_1)}+\frac{1.1+\sin(2\pi x_2/\epsilon_2)}{1.1+\cos(2\pi x_1 /\epsilon_2)}
+\frac{1.1+\cos(2\pi x_2/\epsilon_3)}{1.1+\sin(2\pi x_1 /\epsilon_3)}\\
+\frac{1.1+\sin(2\pi x_2/\epsilon_4)}{1.1+\cos(2\pi x_1 /\epsilon_4)}
+\frac{1.1+\cos(2\pi x_1/\epsilon_5)}{1.1+\sin(2 \pi x_2/\epsilon_5)}+\sin(4x_1^2x_2^2)+1)
\label{eqn:benchmark1}
\end{multline}

where $\epsilon_1=1/5,\, \epsilon_2=1/13,\, \epsilon_3=1/17,\, \epsilon_4=1/31,\,\epsilon_5=1/65$. $\kappa(x_1,x_2)$ is highly oscillatory with non-separable scales on the unit square $\Omega=[0,1]\times [0,1]$. Figure \ref{fig:MTE} illustrates $\kappa(x_1,x_2)$. $g(x_1,x_2) = \sin(x_1)$.
\begin{figure}[H]
	\centering
	{\includegraphics[width=0.5\textwidth]{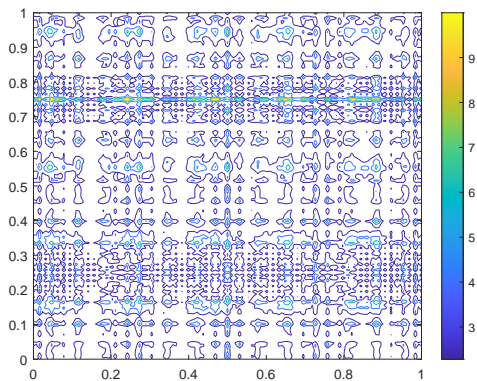}}
	\caption{Multiscale Trigonometric Example,  $\mathrm{contrast}\approx 33.4$}
	\label{fig:MTE}
\end{figure}

\subsubsection{Localization of GRPS basis}

We first show the exponential decay of GRPS basis function. For a fixed coarse mesh with $N_c\times N_c$ nodes, the degrees of freedom for the RPS, GRPS-V, GRPS-E and GRPS-D bases are $(N_c-1)^2, 2N_c^2, 3N_c^2-2N_c$ and $5N_c^2-2N_c$, respectively. These numbers indicate that for the same mesh, the density (the degree of freedoms) of GRPS-D is the largest, and then GRPS-E, GRPS-V and RPS.

We illustrate RPS, GRPS-E, GRPS-V, GRPS-D basis functions in Fig. \ref{fig:PC} to Fig. \ref{fig:DS}. It seems that GRPS-E and GRPS-D bases decay more rapidly compared with GRPS-V and RPS bases. The GRPS-E and GRPS-D bases seem to be more spiky than the GRPS-V basis, and all those three bases are more localized than the RPS basis. 

\begin{figure}[http]
	\centering
	\subfigure[RPS, contour\label{fig:PC}] {\includegraphics[width=0.23\textwidth]{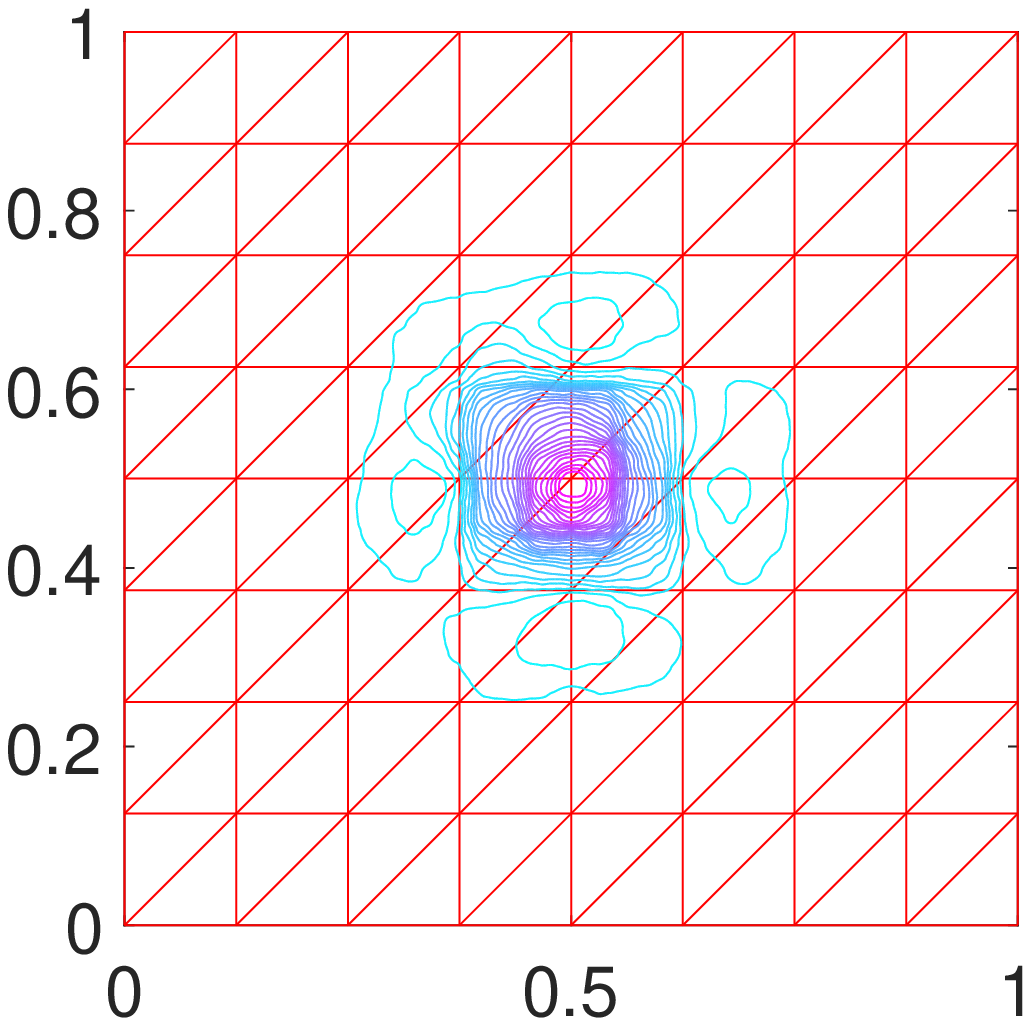}} \quad
	\subfigure[GRPS-V, contour\label{fig:VC}]{\includegraphics[width=0.23\textwidth]{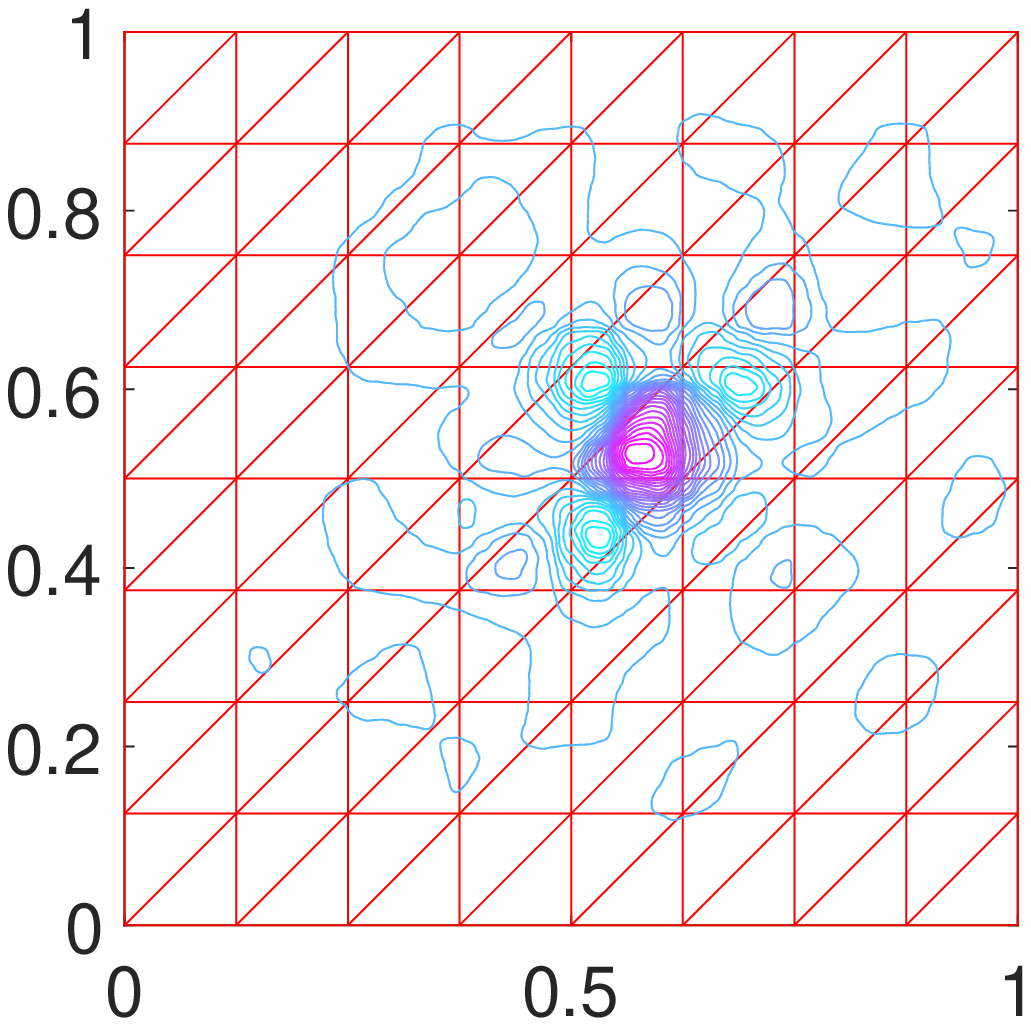}}
	\subfigure[GRPS-E, contour\label{fig:EC}] {\includegraphics[width=0.23\textwidth]{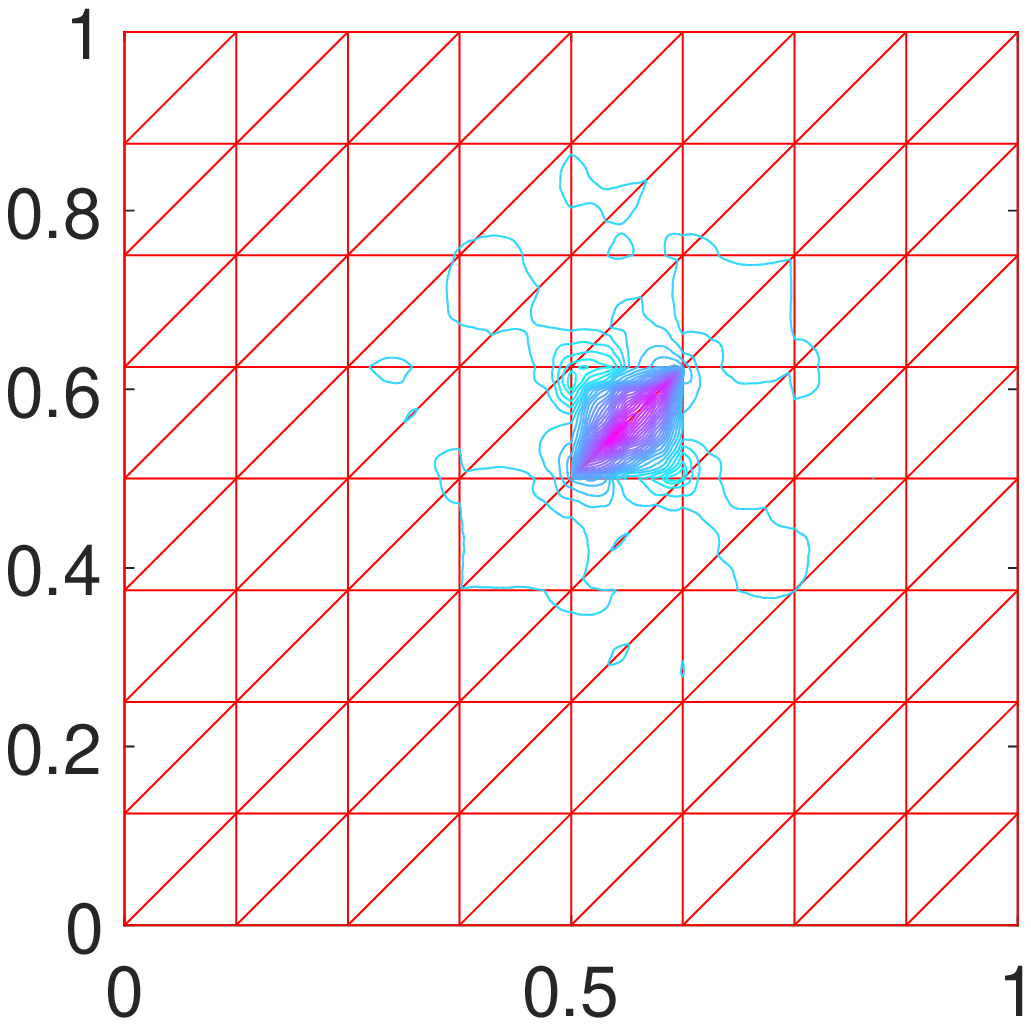}}\quad
	\subfigure[GRPS-D, contour\label{fig:DC}]{\includegraphics[width=0.23\textwidth]{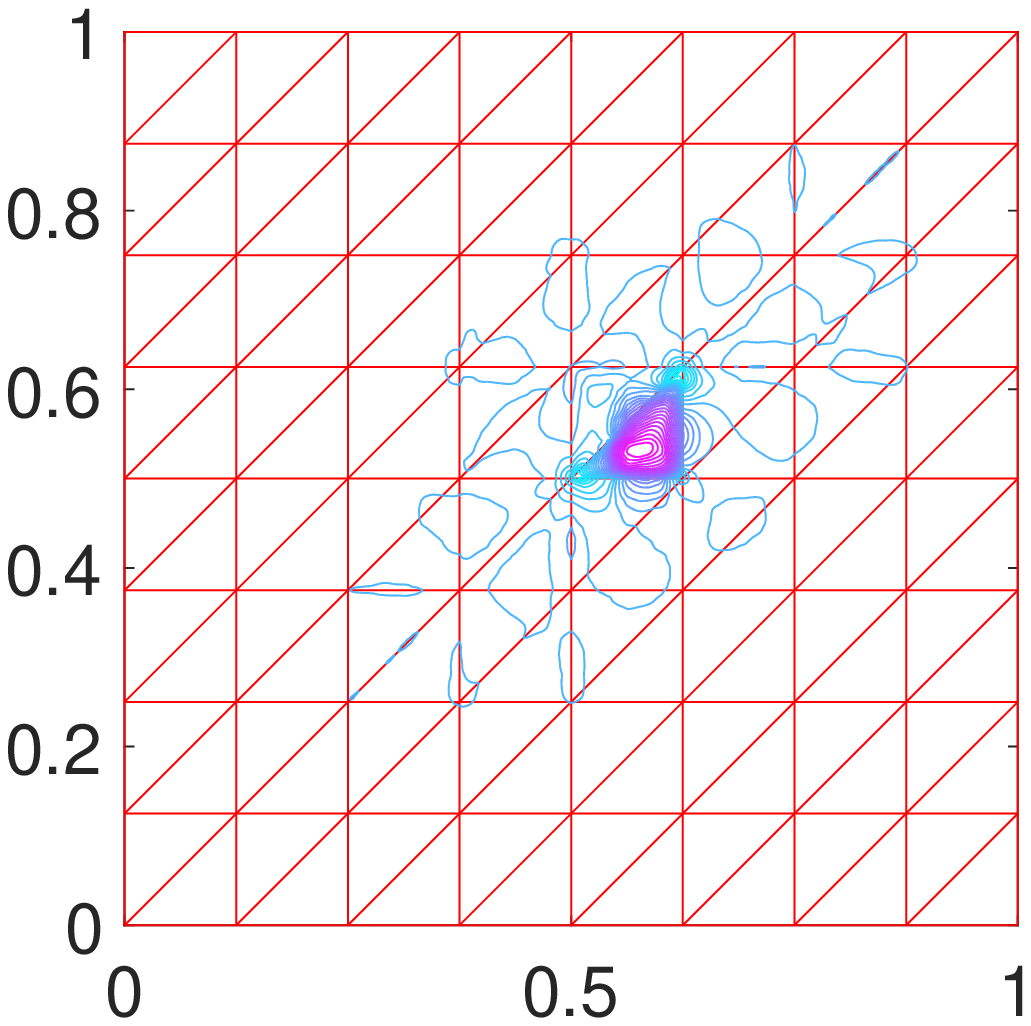}}
	
	\subfigure[RPS, surface\label{fig:PS}] {\includegraphics[width=0.230\textwidth]{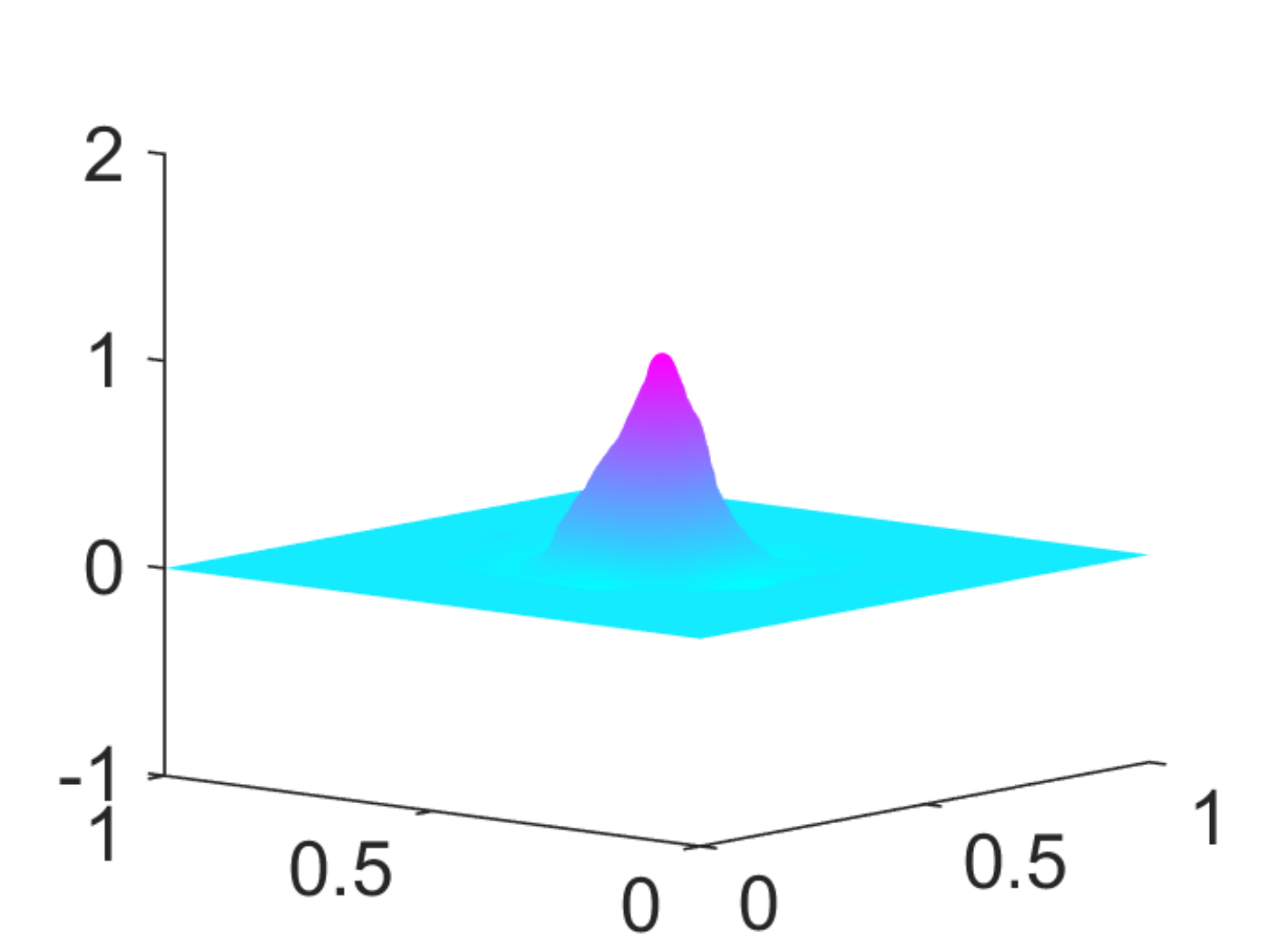}}\quad
	\subfigure[GRPS-V, surface\label{fig:VS}]{\includegraphics[width=0.230\textwidth]{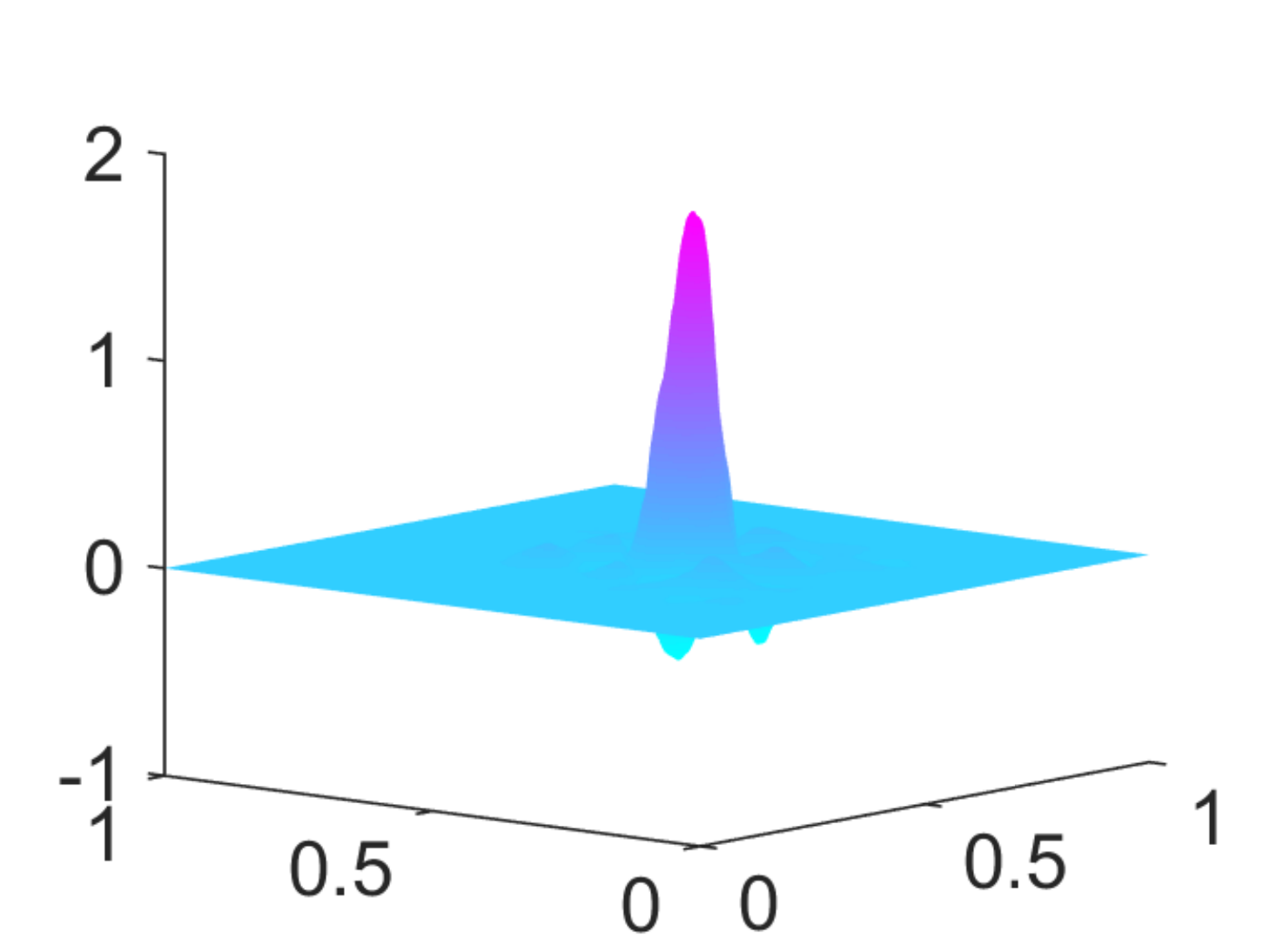}}
	\subfigure[GRPS-E, surface\label{fig:ES}] {\includegraphics[width=0.230\textwidth]{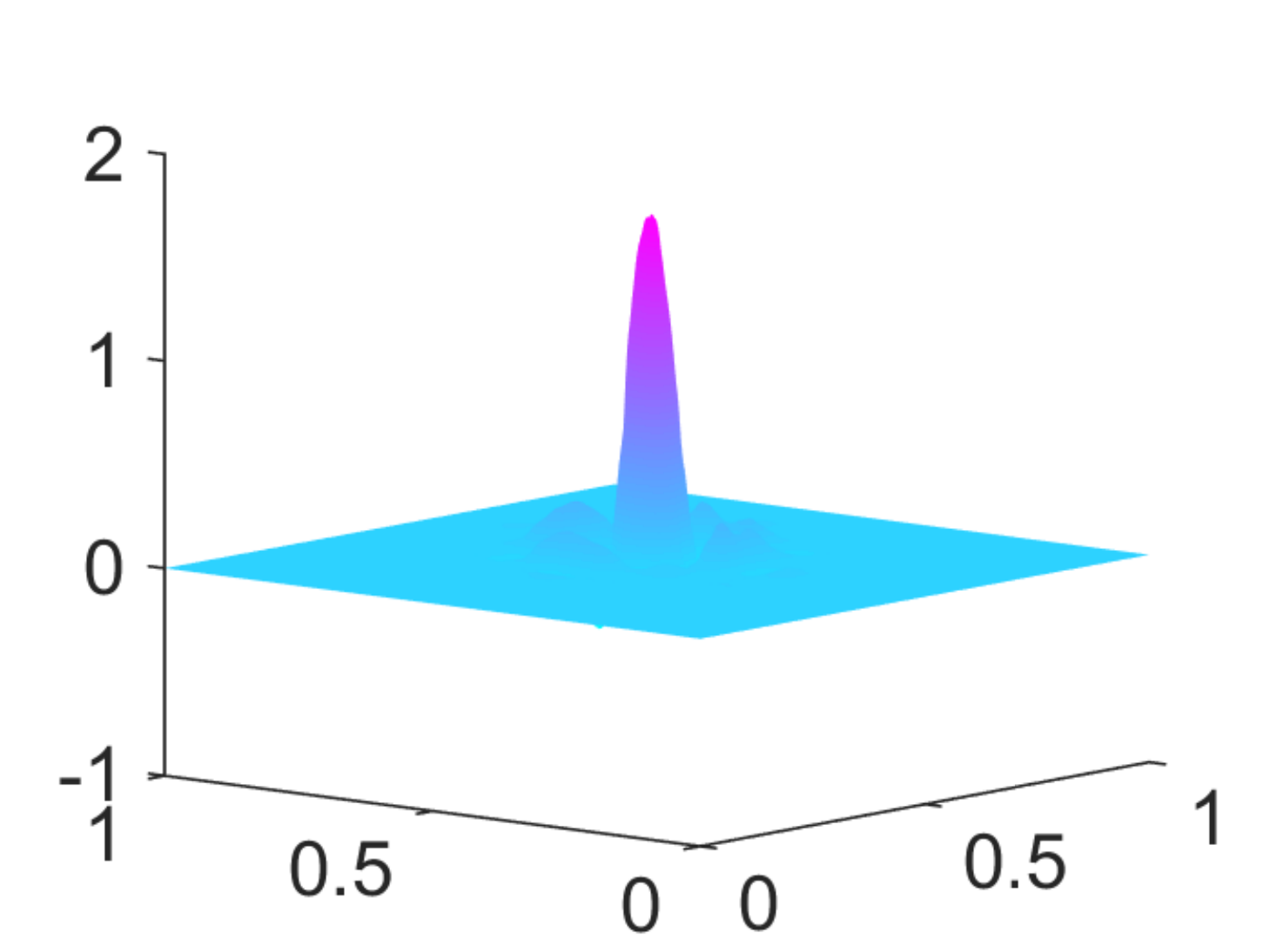}}\quad
	\subfigure[GRPS-D, surface\label{fig:DS}]{\includegraphics[width=0.230\textwidth]{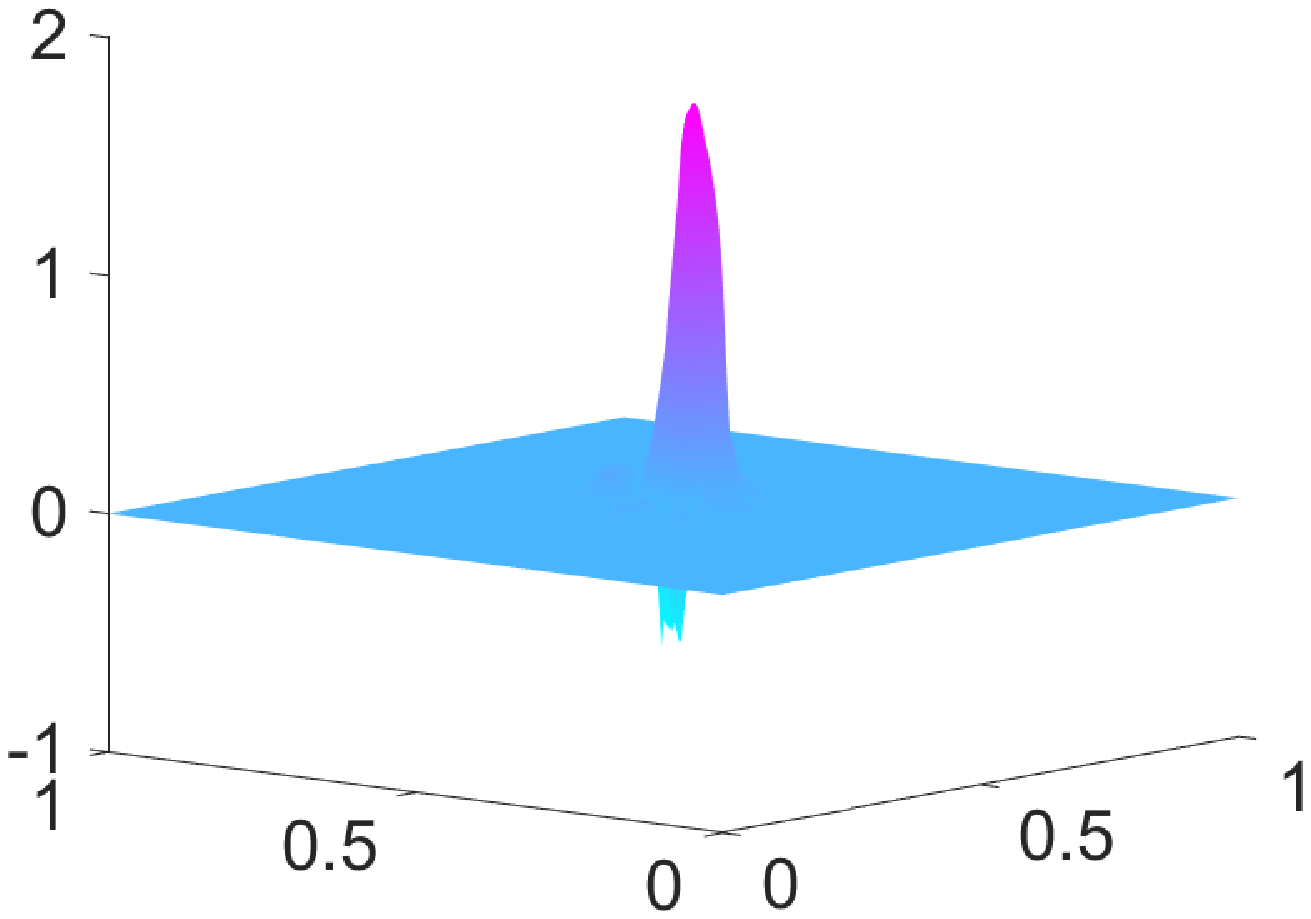}}
	
	\subfigure[RPS, $x_2=x_1$\label{fig:Pslice}] {\includegraphics[width=0.230\textwidth]{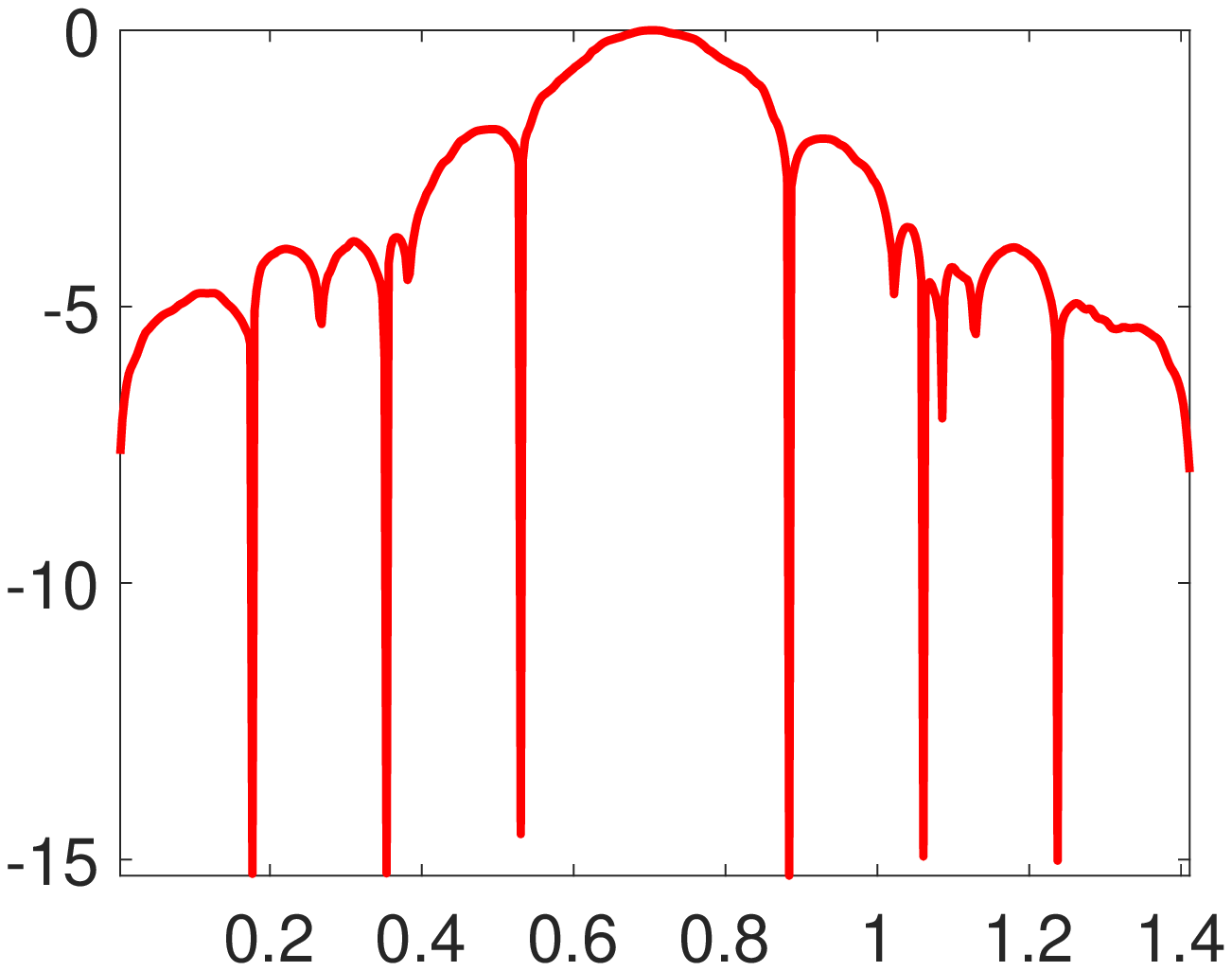}}\quad
	\subfigure[GRPS-V, $x_2=x_1$ \label{fig:Vslice}]{\includegraphics[width=0.230\textwidth]{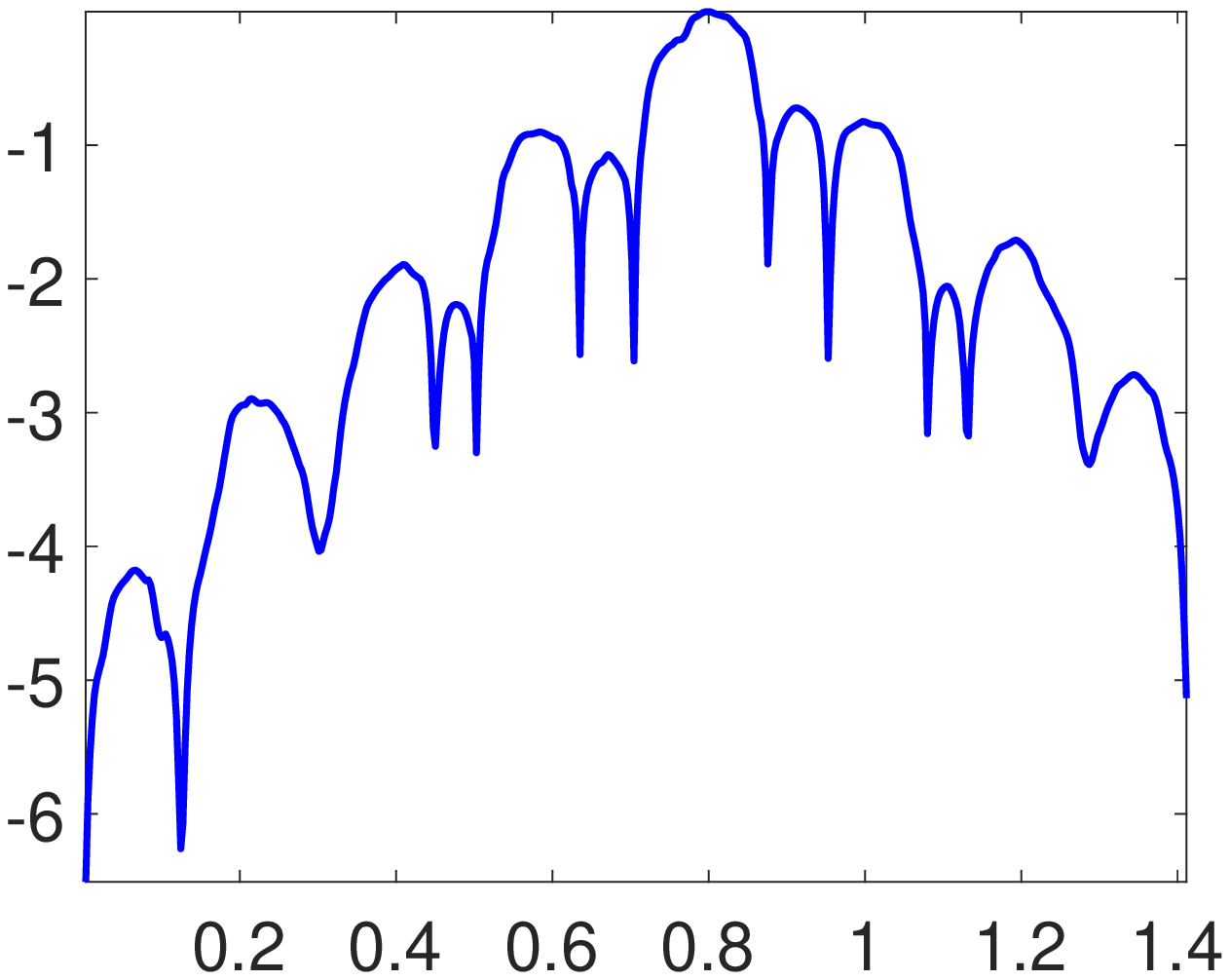}}
	\subfigure[GRPS-E, $x_2=x_1$\label{fig:Eslice}] {\includegraphics[width=0.230\textwidth]{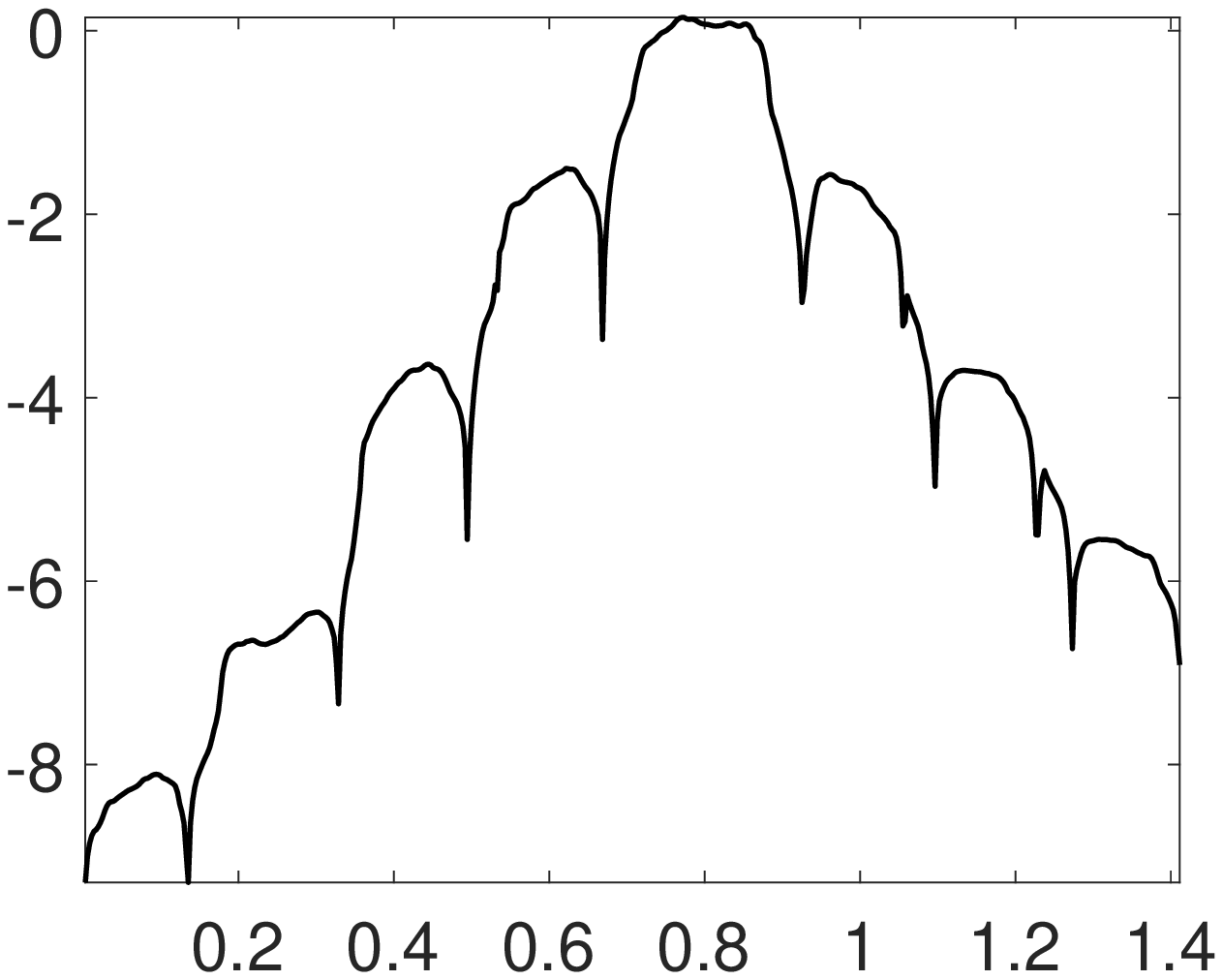}}\quad
	\subfigure[GRPS-D, $x_2=x_1$ \label{fig:Dslice}]{\includegraphics[width=0.230\textwidth]{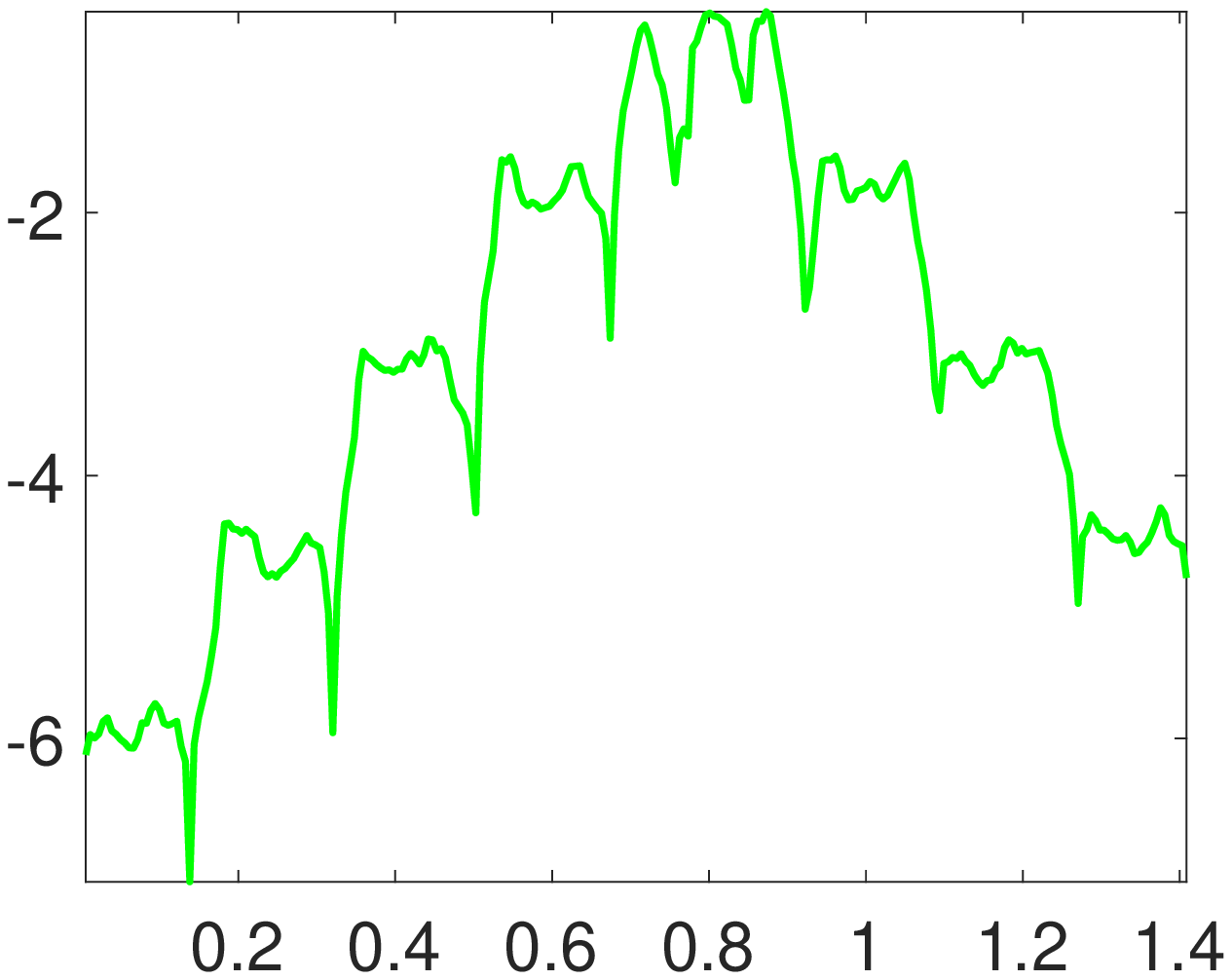}}
	
	\subfigure[RPS, $x_1+x_2=\frac{9}{8}$\label{fig:Pslice2}] {\includegraphics[width=0.230\textwidth]{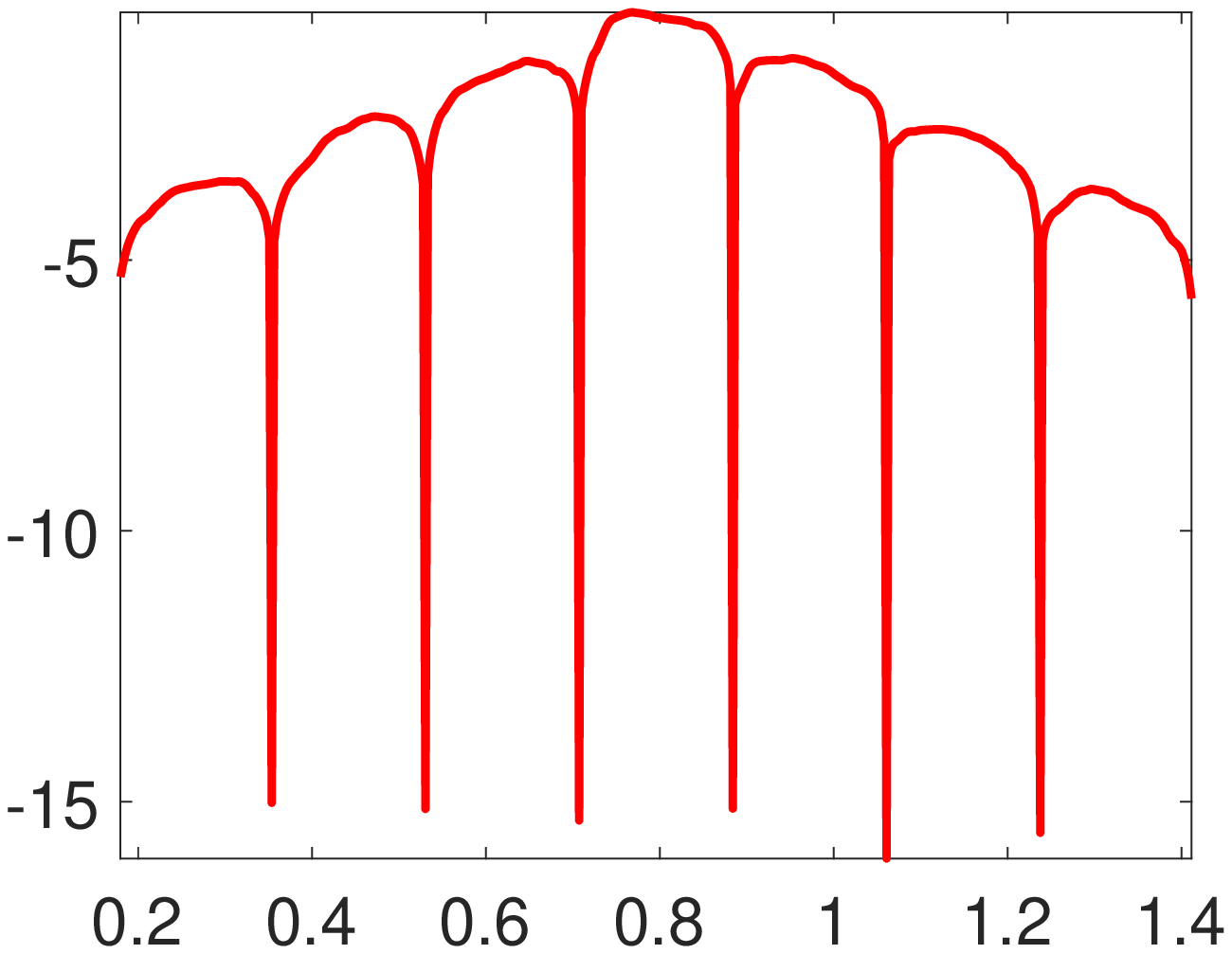}}\quad
	\subfigure[GRPS-V, $x_1+x_2=\frac{9}{8}$ \label{fig:Vslice2}]{\includegraphics[width=0.230\textwidth]{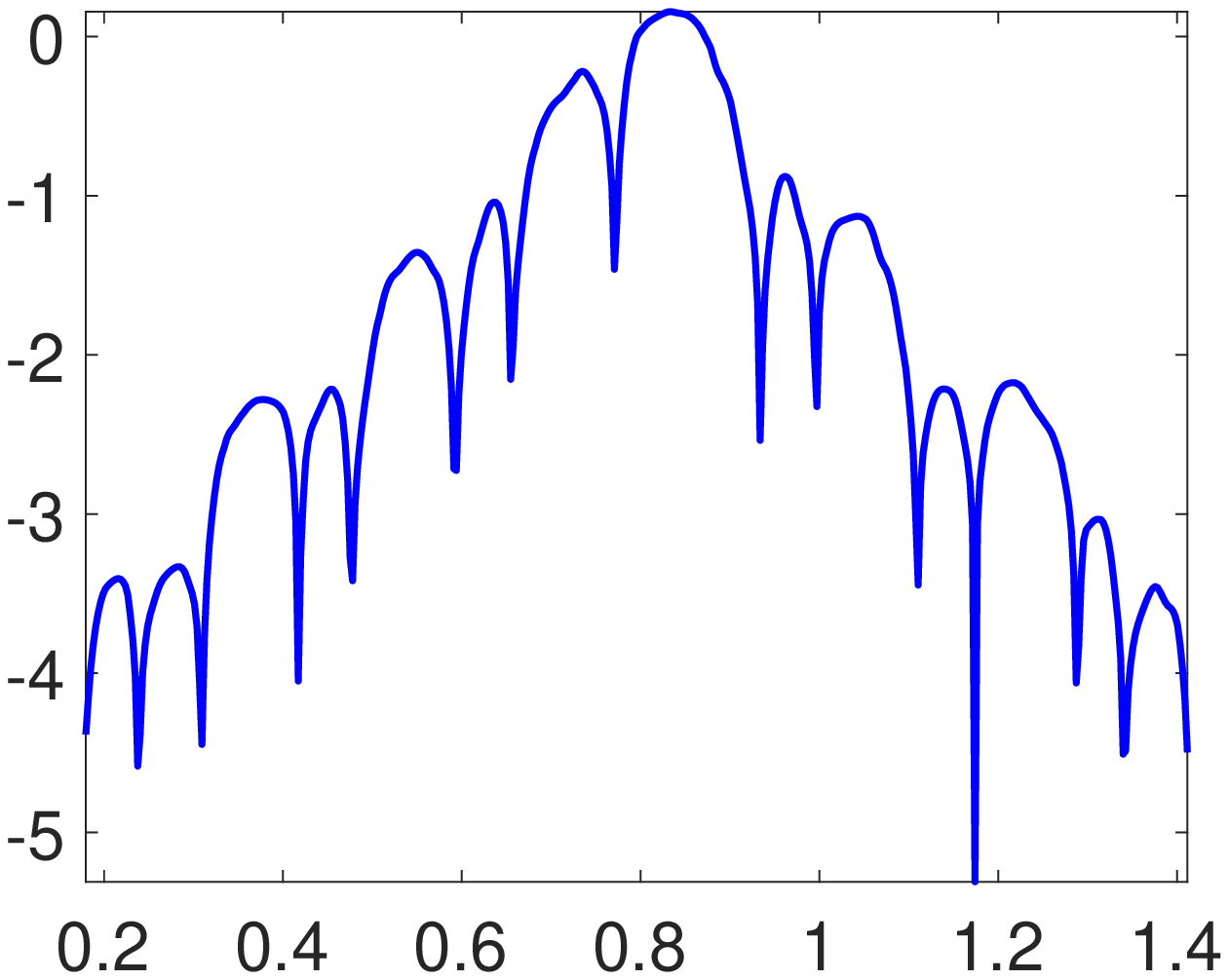}}
	\subfigure[GRPS-E, $x_1+x_2=\frac{9}{8}$\label{fig:Eslice2}] {\includegraphics[width=0.230\textwidth]{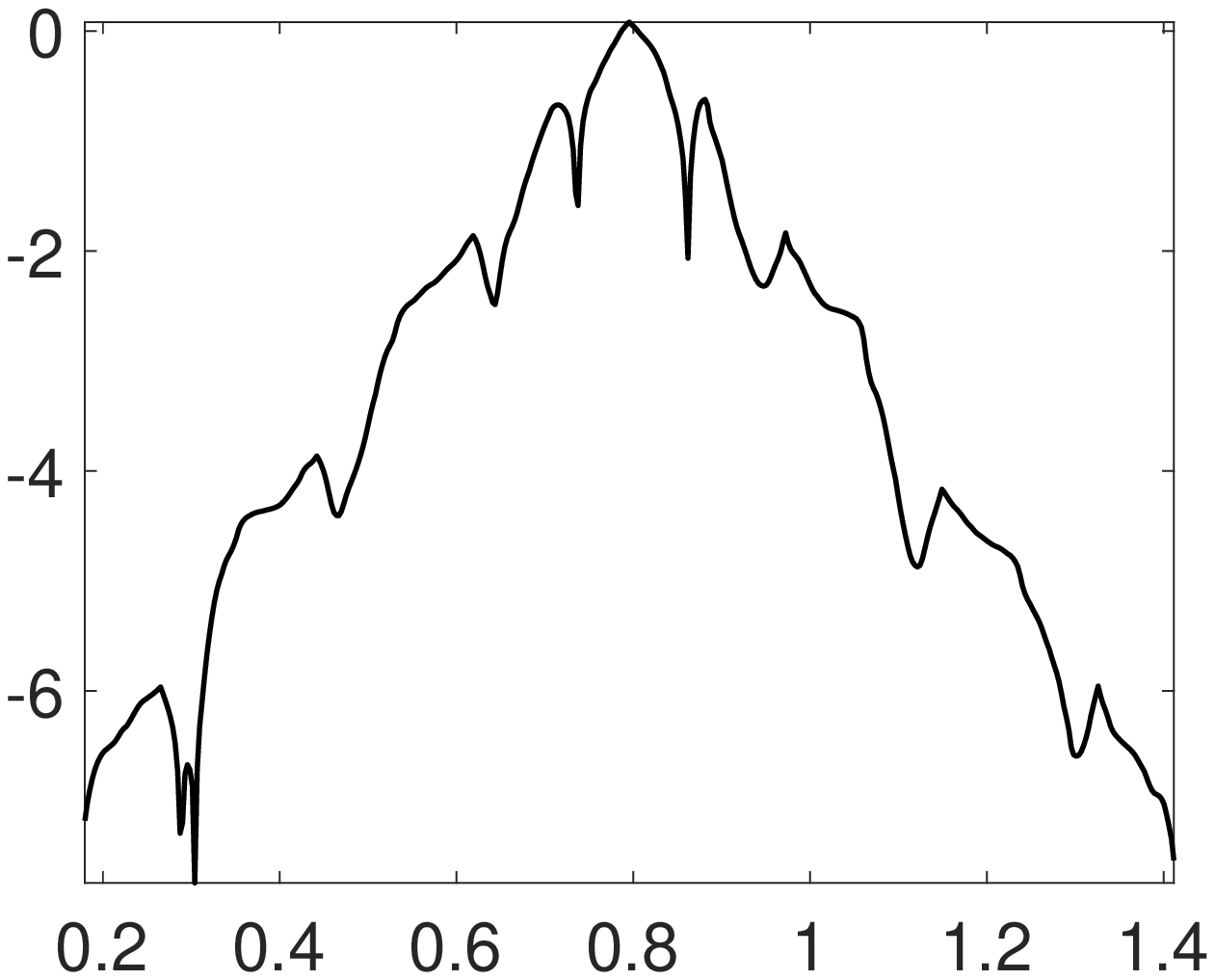}}\quad
	\subfigure[GRPS-D, $x_1+x_2=\frac{9}{8}$ \label{fig:Dslice2}]{\includegraphics[width=0.230\textwidth]{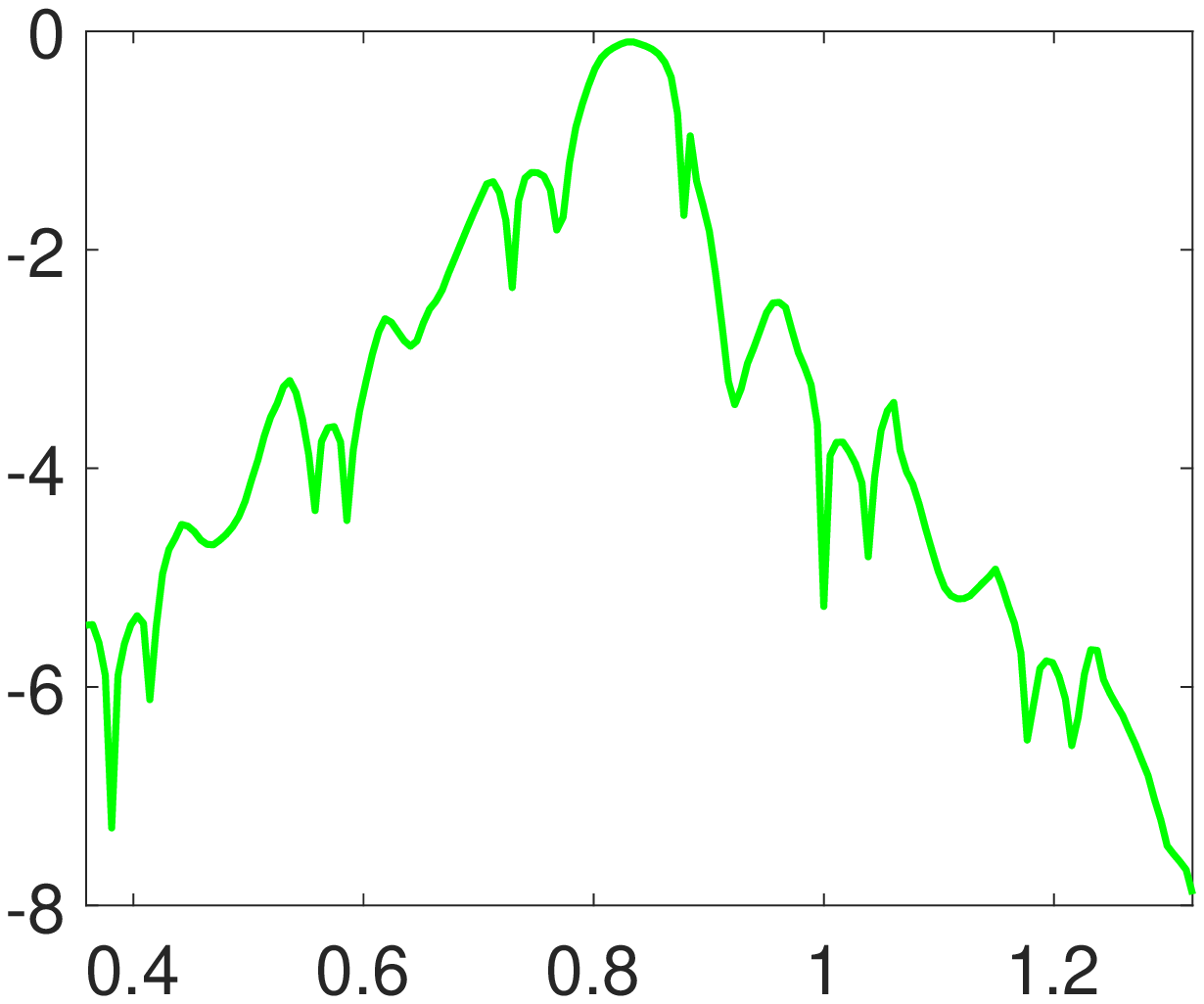}}
	\caption{Contour plots (a)-(d), surface plots (e)-(h), 1d slice plots (in the $\log_{10}$-scale) along $x_2=x_1$ (i)-(l), and $x_1+x_2=9/8$ (m)-(p), for RPS, GRPS-V, GRPS-E, GRPS-D basis functions (\textit{mstrig} exmaple). For the coarse mesh $N_c=8$, and for the fine mesh $N_f=512$.}
\end{figure}

\subsubsection{Convergence}
We investigate the convergence property of GRPS bases and RPS bases for the \textit{mstrig} example. We use a fixed fine mesh with $h=2^{-8}$ and coarse meshes with sizes $H=2^{-3},\,2^{-4},\,2^{-5}$, respectively. In Figure \ref{fig:comp_phi}, we compare convergence curves of four different basis functions, with localization levels $\ell=2,\,3,\,4,\,5,\,6$. The x-axis stands for the degree of freedom of the basis functions, and the y-axis stands for the error $\|u_{H}^{\ell}-u_h\|_{H_0^1}$ in $\log_{10}$-scale, where $u_h$ is the finite element reference solution to \eqref{eqn:varellp} over $V_h$. Each curve stands for the convergence rate vs. degrees of freedom plot with respect to a fixed $\ell$.
\begin{figure}[http]
	\centering
	\subfigure[RPS, $H=2^{-3}$ \label{fig:Pc1}]
	{\includegraphics[width=0.24\textwidth]{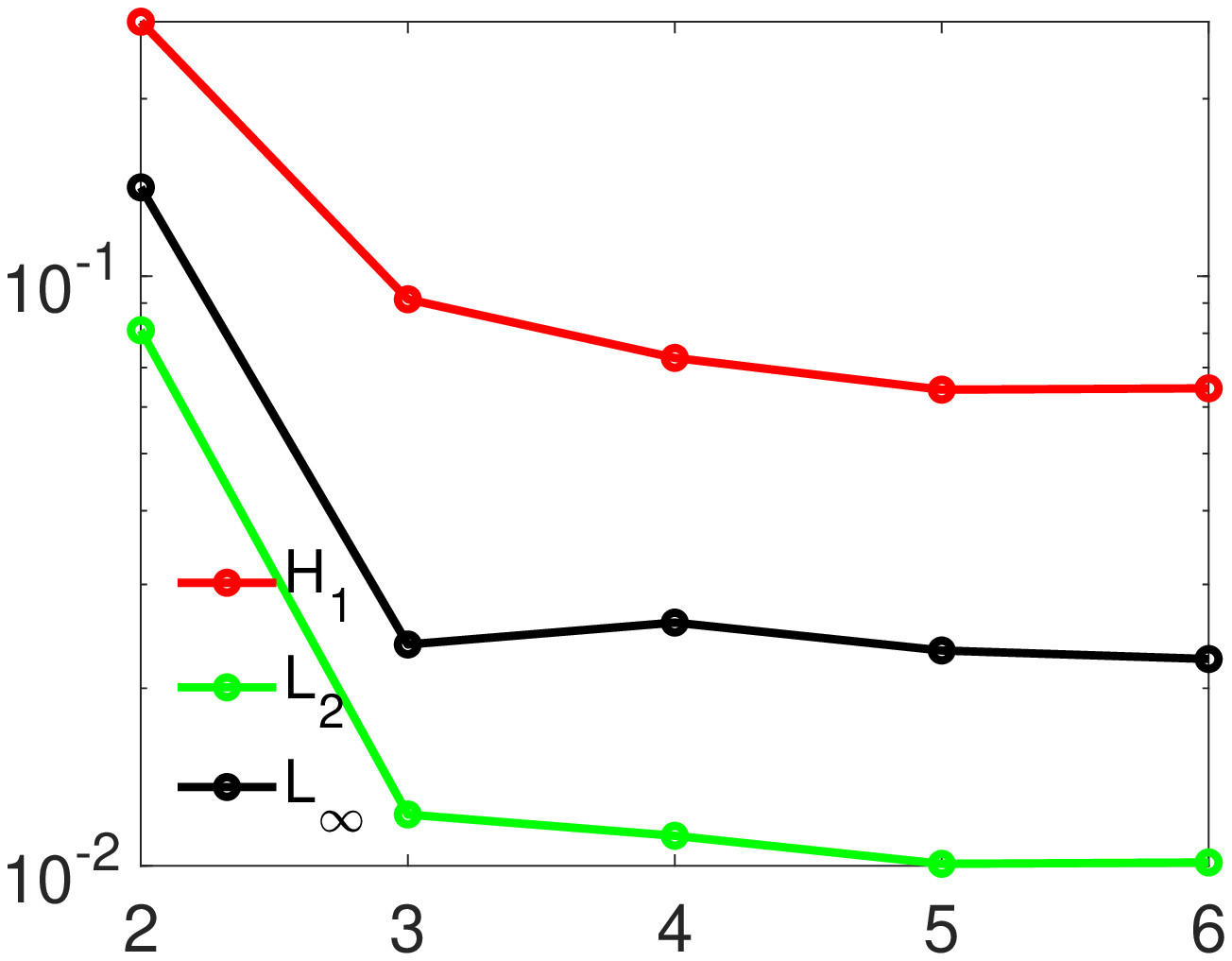}}
	\quad
	\subfigure[RPS, $H=2^{-4}$ \label{fig:Pc2}]
	{\includegraphics[width=0.24\textwidth]{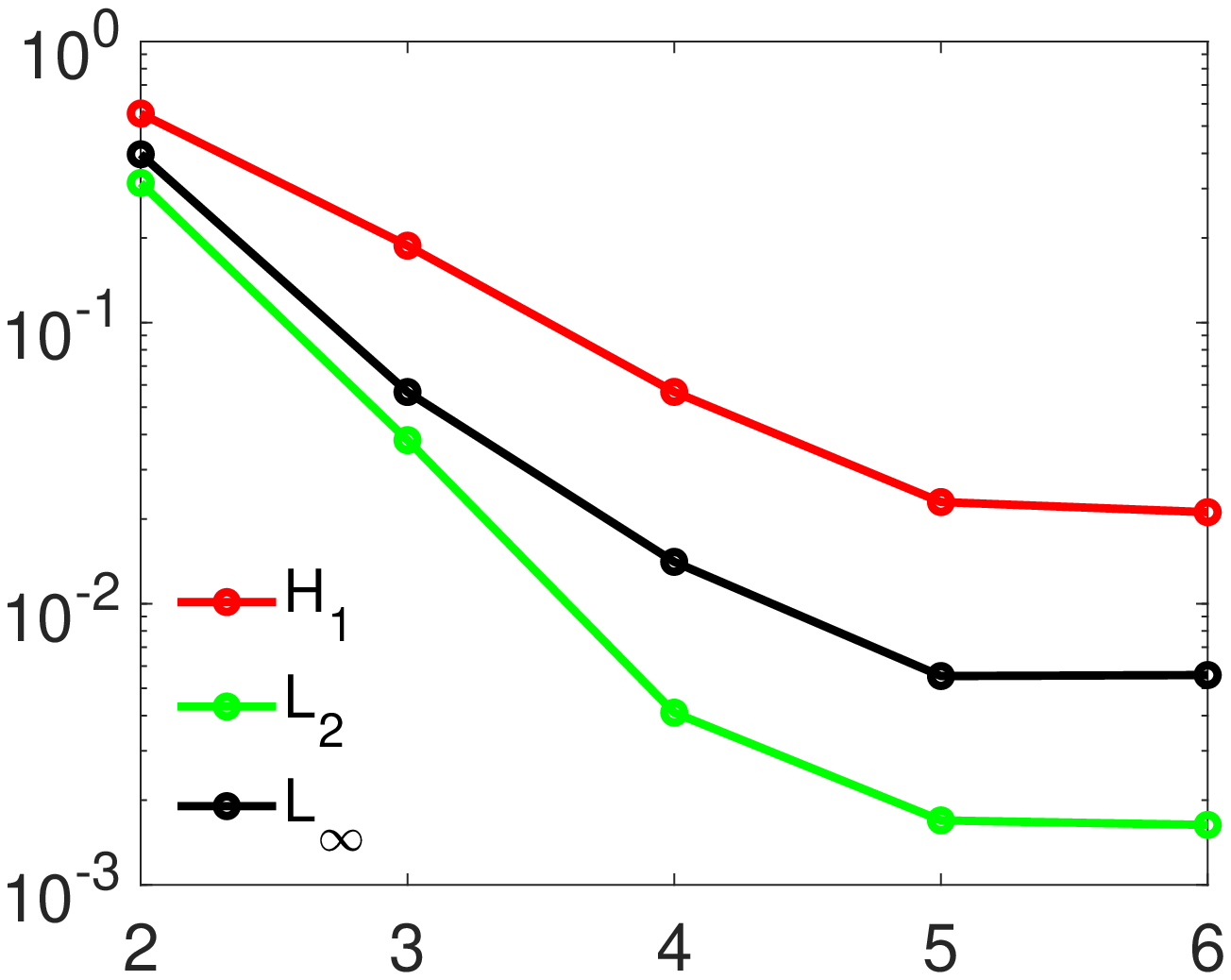}}
	\quad
	\subfigure[RPS, $H=2^{-5}$ \label{fig:Pc3}]
	{\includegraphics[width=0.24\textwidth]{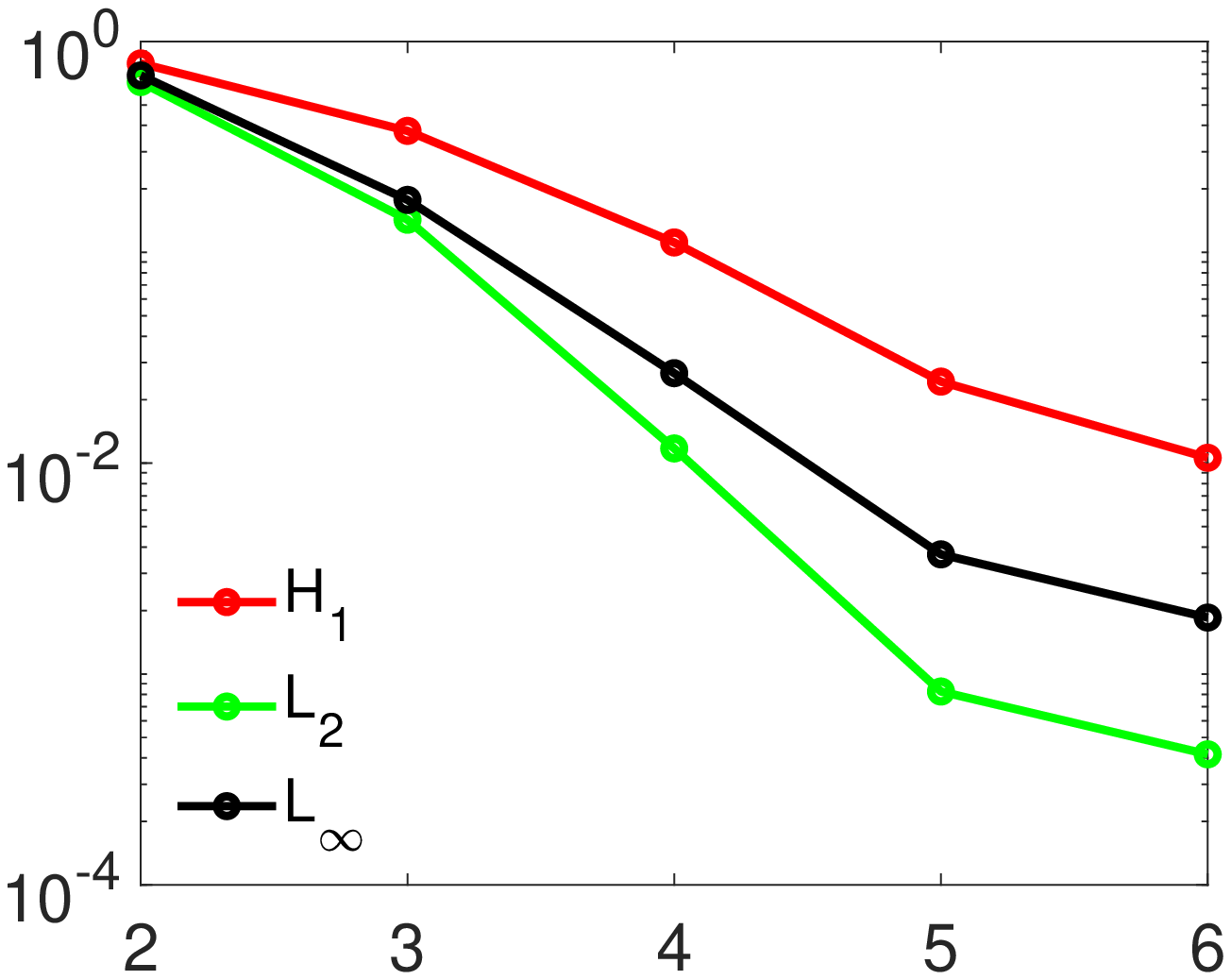}}
	
	\subfigure[GRPS-V, $H=2^{-3}$ \label{fig:PT1}]
	{\includegraphics[width=0.24\textwidth]{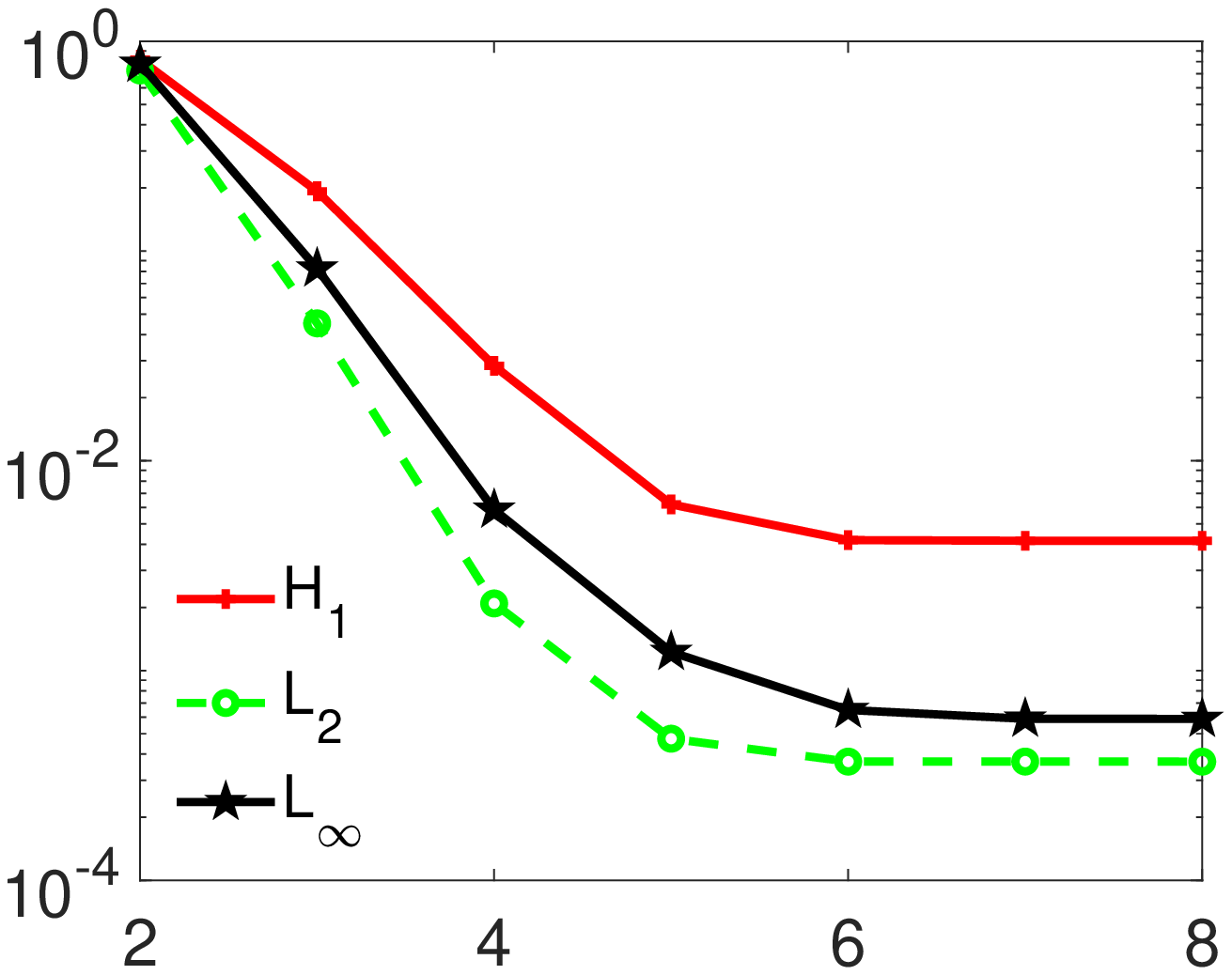}}
	\quad
	\subfigure[GRPS-V, $H=2^{-4}$ \label{fig:PT2}]
	{\includegraphics[width=0.24\textwidth]{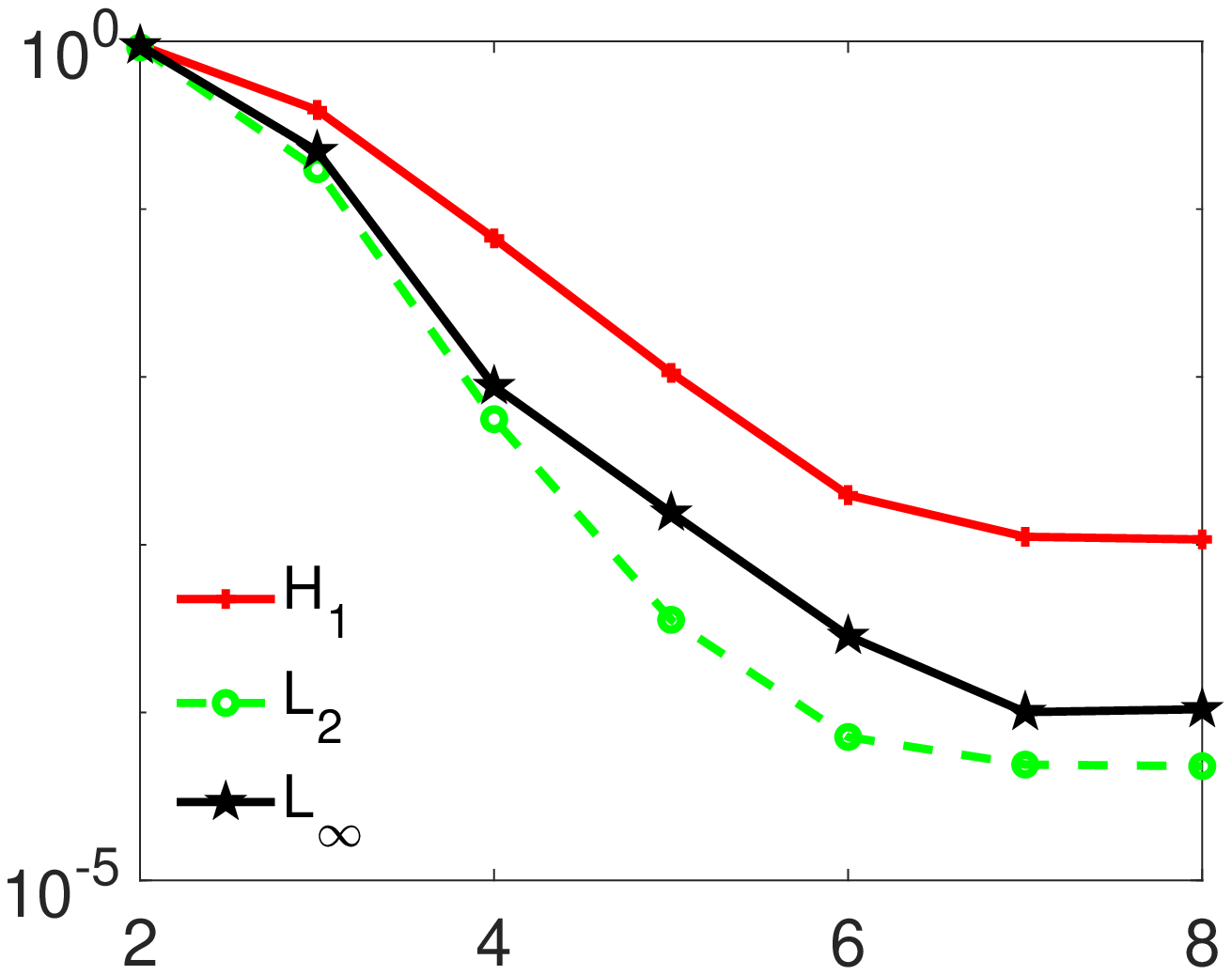}}
	\quad
	\subfigure[GRPS-V, $H=2^{-5}$ \label{fig:PT3}]
	{\includegraphics[width=0.24\textwidth]{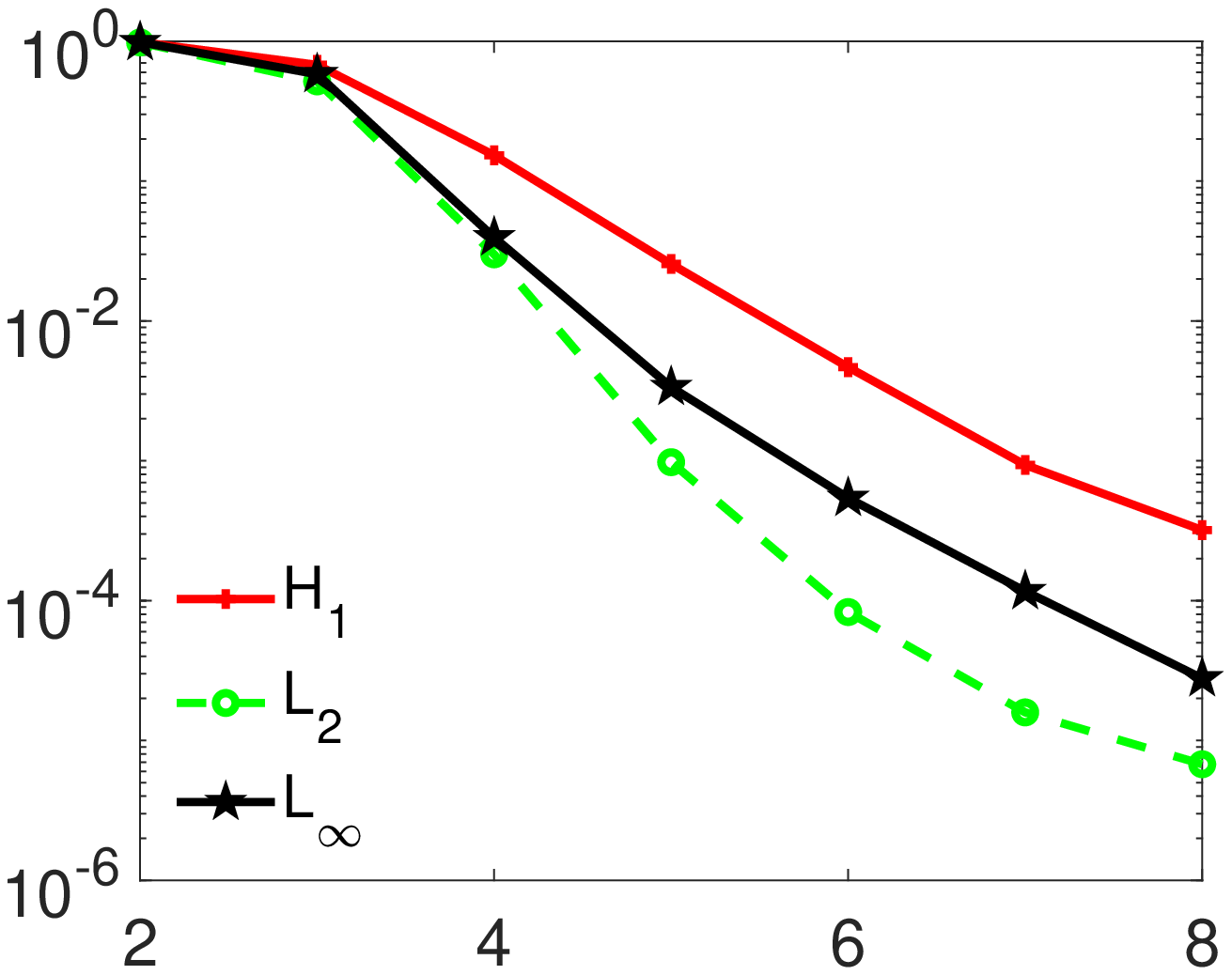}}
	
	\subfigure[GRPS-E, $H=2^{-3}$ \label{fig:PE1}]
	{\includegraphics[width=0.24\textwidth]{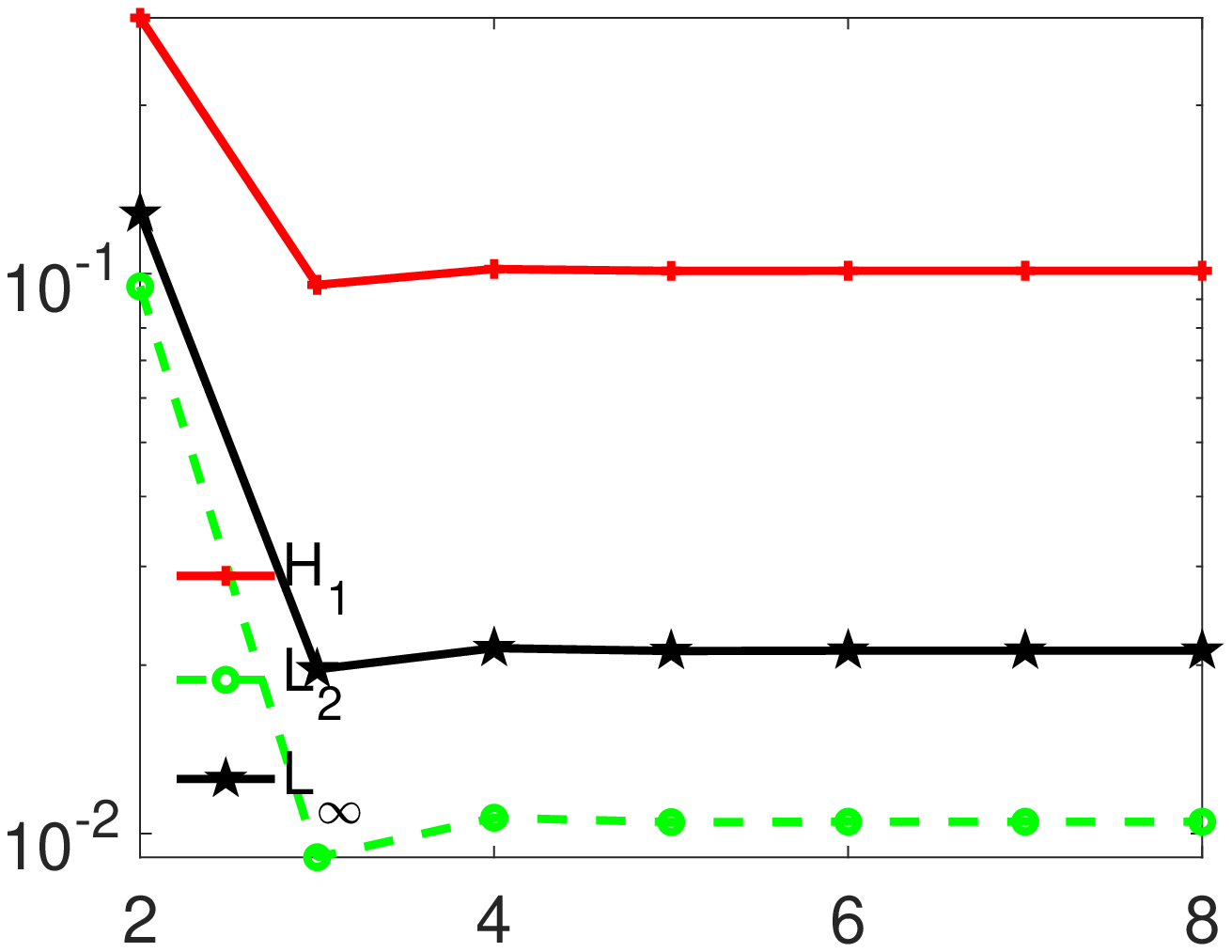}}
	\quad
	\subfigure[GRPS-E, $H=2^{-4}$ \label{fig:PE2}]
	{\includegraphics[width=0.24\textwidth]{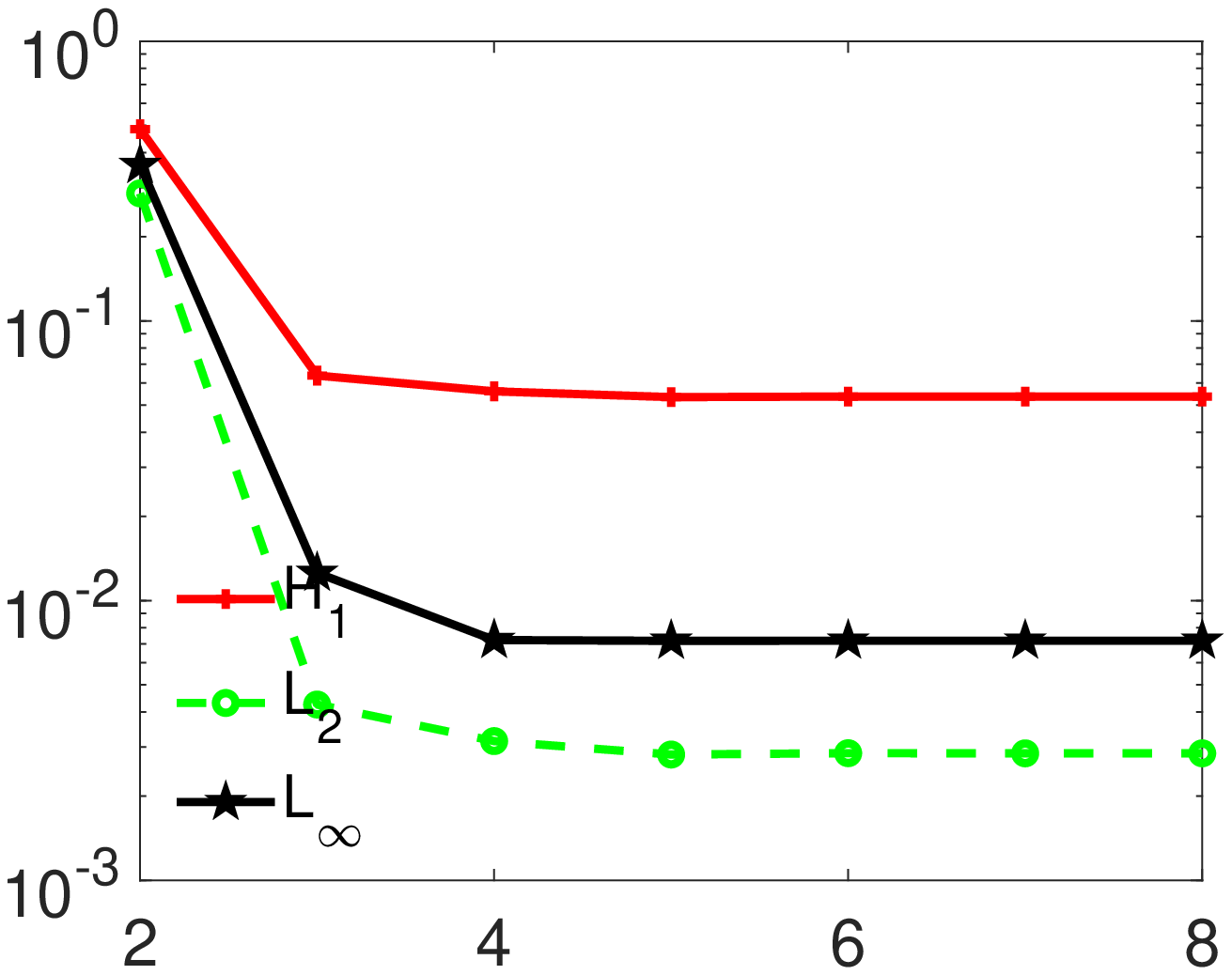}}
	\quad
	\subfigure[GRPS-E, $H=2^{-5}$ \label{fig:PE3}]
	{\includegraphics[width=0.24\textwidth]{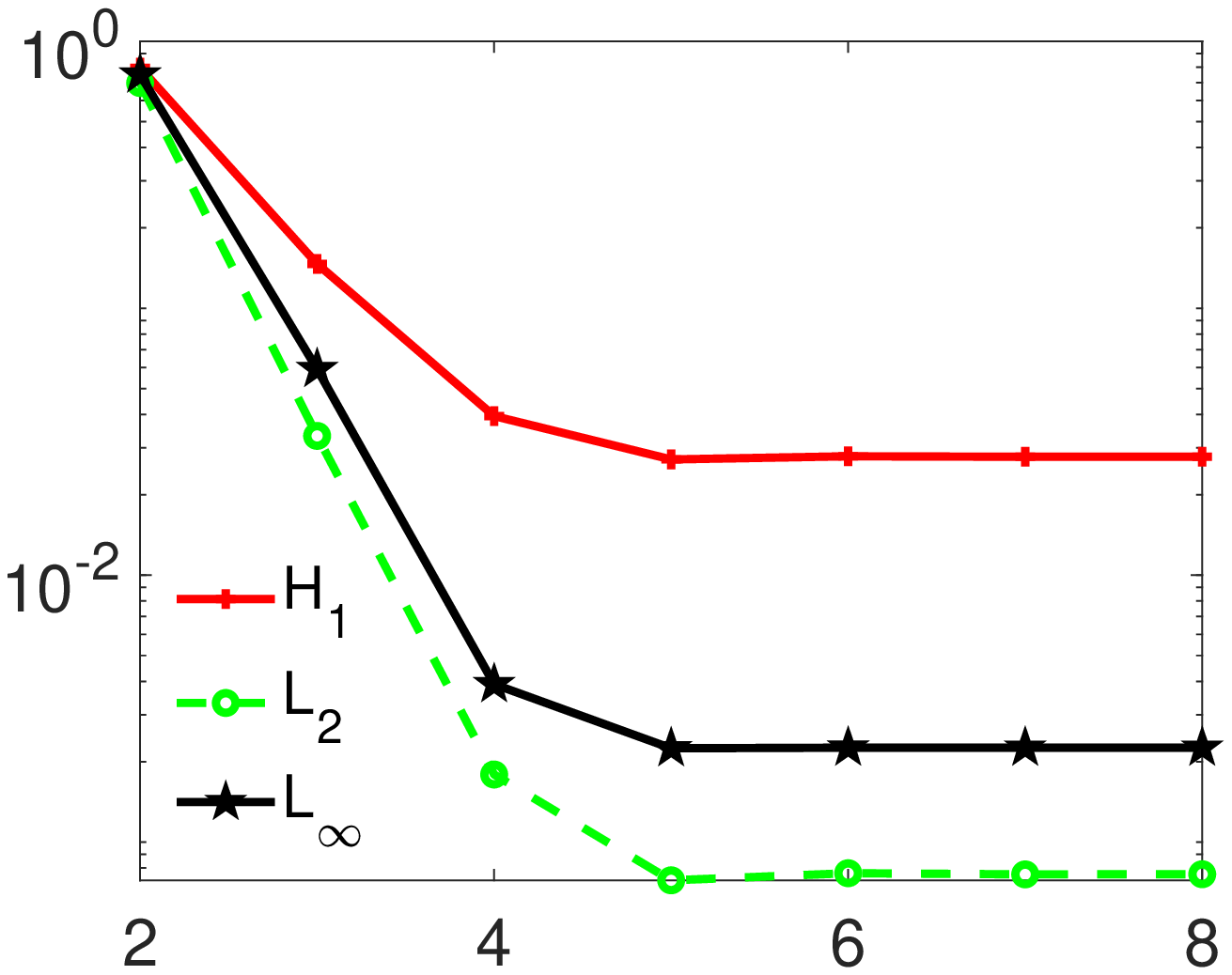}}
	\quad
	
	\subfigure[GRPS-D, $H=2^{-3}$ \label{fig:PD1}]
	{\includegraphics[width=0.24\textwidth]{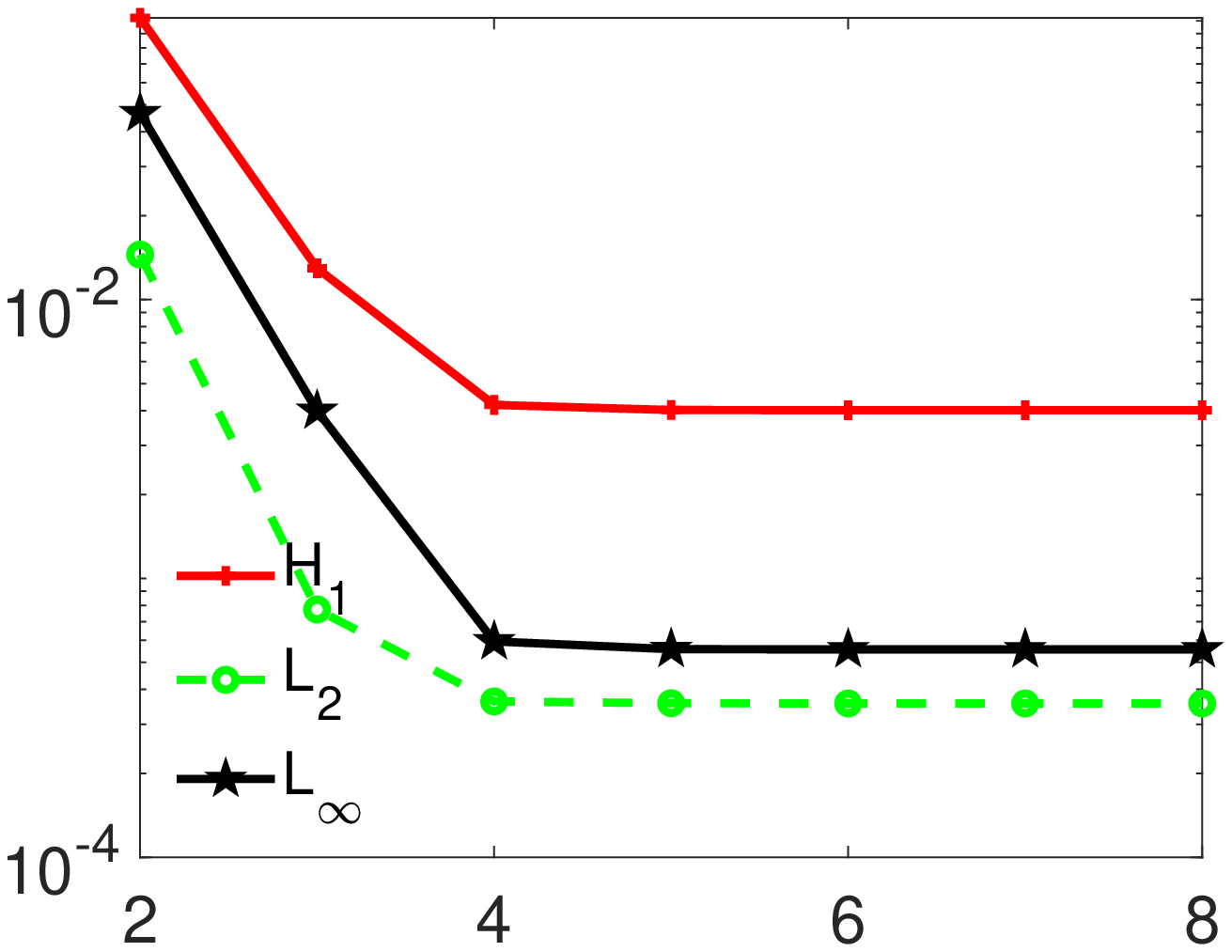}}
	\quad
	\subfigure[GRPS-D, $H=2^{-4}$ \label{fig:PD2}]
	{\includegraphics[width=0.24\textwidth]{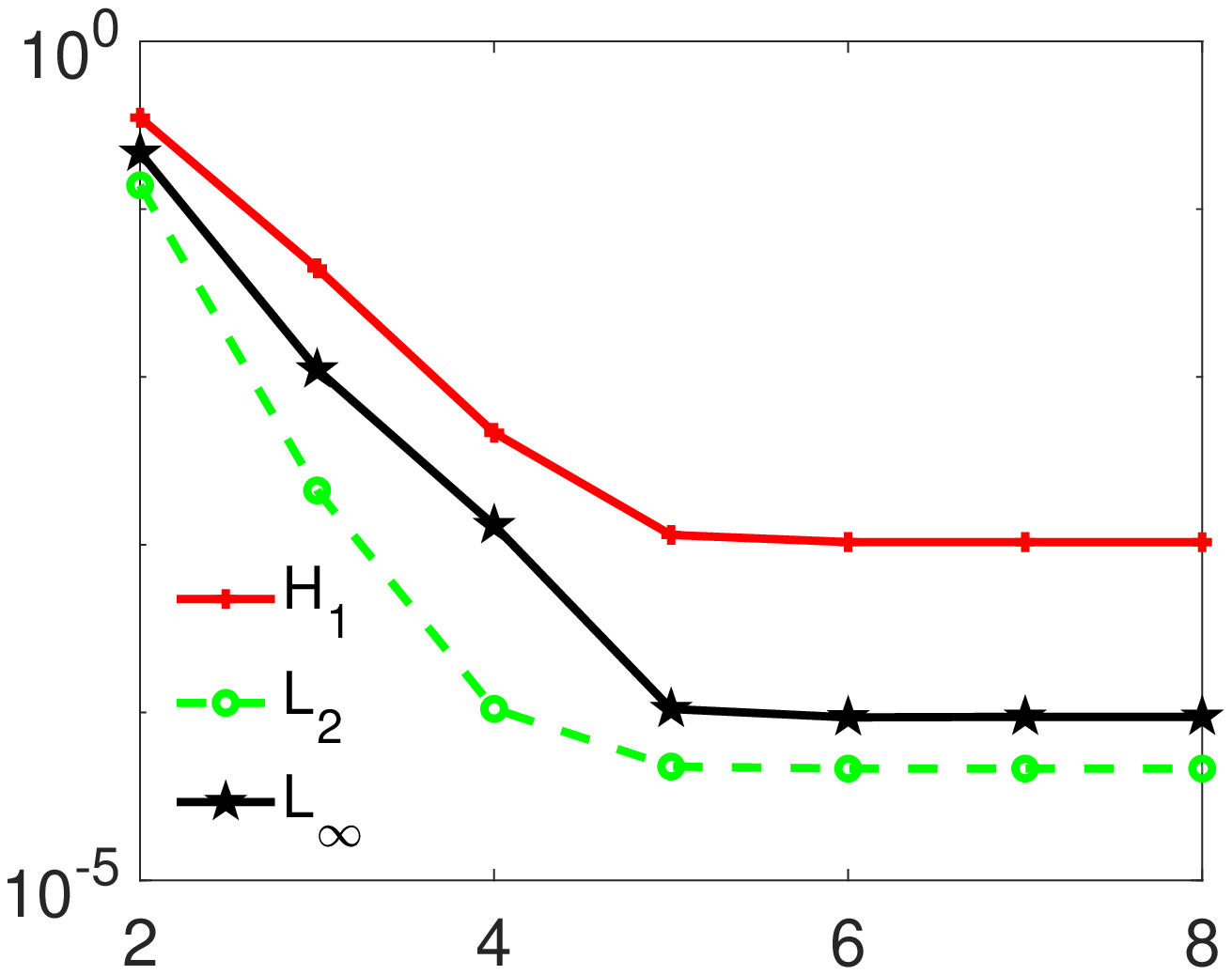}}
	\quad
	\subfigure[GRPS-D, $H=2^{-5}$ \label{fig:PD3}]
	{\includegraphics[width=0.24\textwidth]{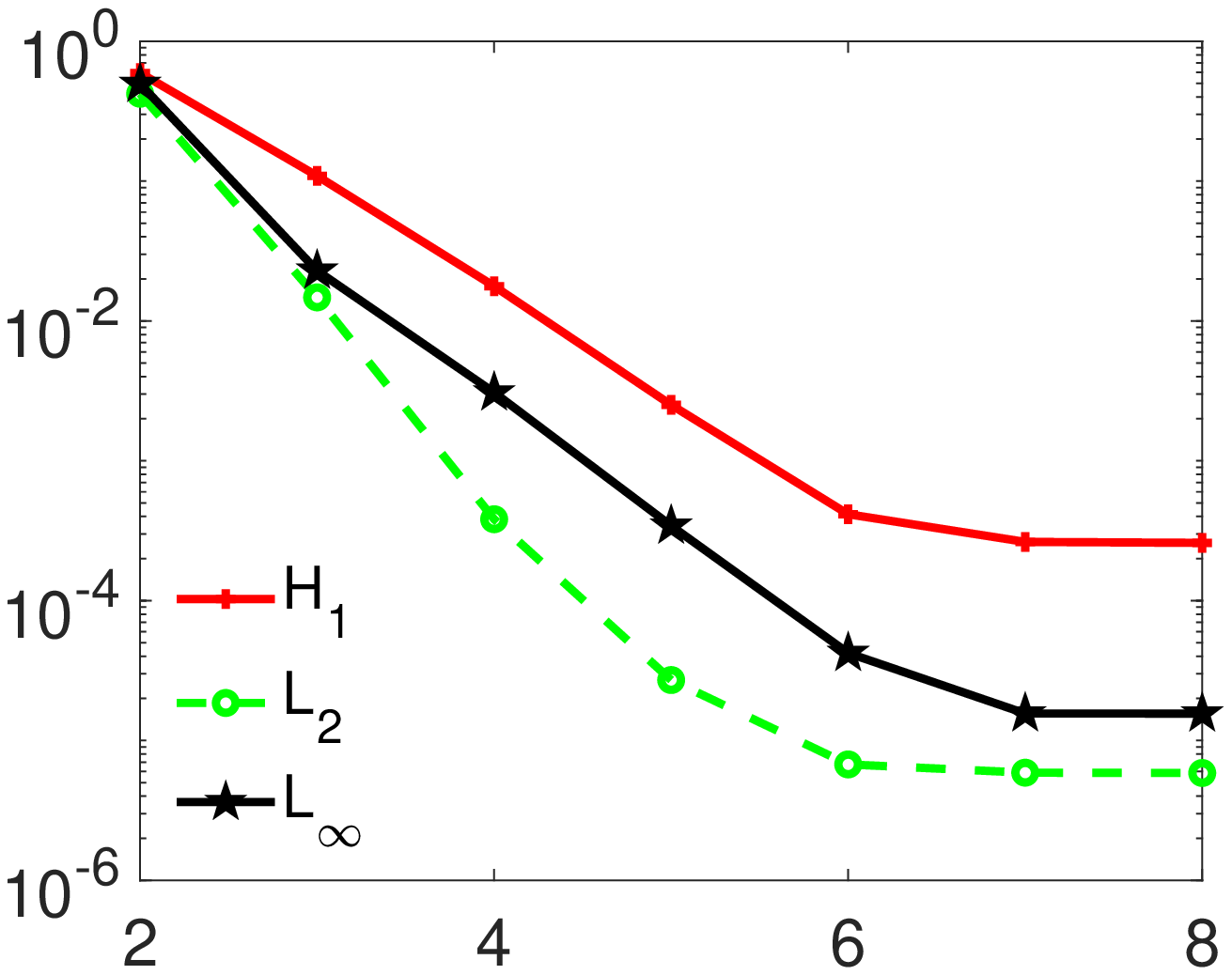}}	
	\caption{Comparison of the convergence with respect to the localization level $\ell$, between RPS, GRPS-V, GRPS-E and GRPS-D. The x-axis stands for the localization level $\ell$ and the y-axis stands for the relative error $\|u_h-u_H^{\ell}\|_{H^1_0(\Omega)}/\|u_h\|_{H^1_0(\Omega)}$ in the $\log_{10}$-scale. } 
	\label{fig:loclevl}
\end{figure}

In Figure \ref{fig:loclevl}, we show the convergence of GRPS bases with respect to the localization level $\ell$ and the coarse mesh size $H$. For a fixed $H$, the accuracy improves with increasing $\ell$, until it reaches the saturation level $O(H)$. 

In Figure \ref{fig:comp_phi}, we demonstrate that, for the fixed localization level $\ell=6$, GRPS-D has an approximately second order convergence rate. Compared with other bases, GRPS-D has the smallest computational cost to achieve a given approximation error, in terms of the number of layers and coarse degrees of freedom .

\begin{figure}[H]
	\centering
	{\includegraphics[width=0.7\textwidth]{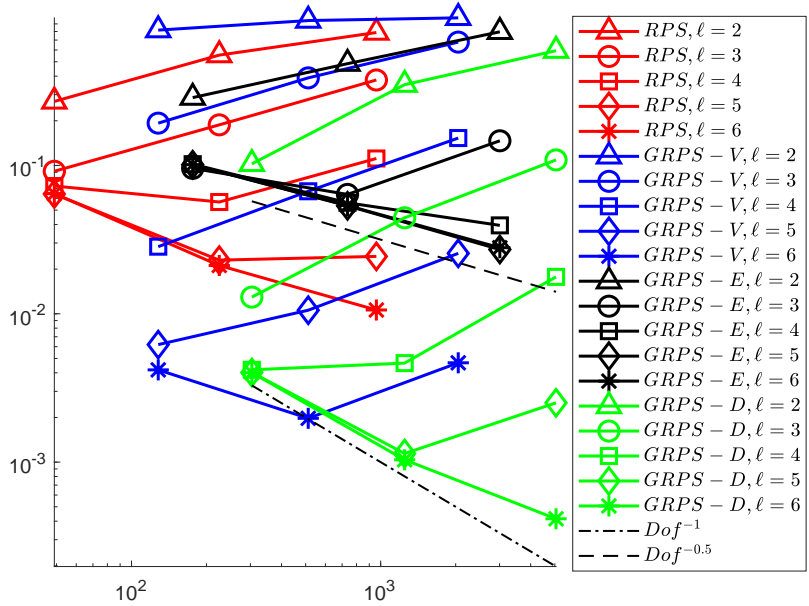}}
	\caption{Convergence curves (error vs. dof) for the \textit{mstrig} example, with fixed localization levels $\ell=2,\,3,\,4,\,5,\,6$, respectively. The x-axis stands for the degrees of freedom in the $\log_{10}$-scale and the y-axis stands for the relative error $\|u_h-u_H^{\ell}\|_{H^1_0(\Omega)}/\|u_h\|_{H^1_0(\Omega)}$ in the $\log_{10}$-scale. }
	\label{fig:comp_phi}
\end{figure}


\subsection{SPE10}
\label{sec:numerics:spe10}

The second example is the  \texttt{SPE10} benchmark problem \footnote{\texttt{http://www.spe.org/web/csp/}}, which is a prototypical example with high contrast heterogeneous coefficient. The physical domain is the rectangular cuboid  $[0,220] \times [0,60] \times[0,85]$, with piecewise constant coefficient $\kappa(x_1,x_2,x_3)$ given on grid points. We select the coefficients over layer 63 with respect to $z$-axis for our two dimensional test problems, and illustrate its contour in Figure \ref{fig:spe10}. Layer 63 possesses the so called channel features, which makes the problem more challenging. $g(x_1,x_2) = \sin(x_1)$.

\begin{figure}[H]
	\centering
	{\includegraphics[width=0.85\textwidth]{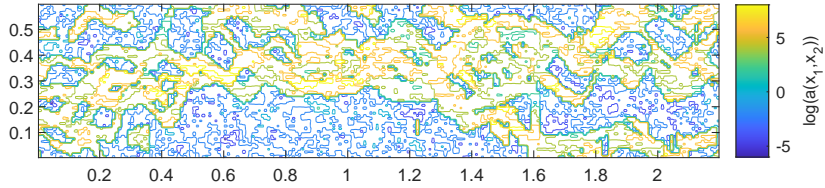}}
	\caption{SPE10 layer 63, $\mathrm{contrast}\approx  10^{16}$.}
	\label{fig:spe10}
\end{figure}

In Figure \ref{fig:comp_phi_2}, we show the performance of RPS, GRPS-V, GRPS-E, and GRPS-D bases for the SPE10 example. For the same degrees of freedom, GRPS-D has the best accuracy compared to others, and it achieves a stable convergences rate in less number of layers $\ell$. 

\begin{figure}[H]
	\centering
	\includegraphics[width=0.80\textwidth]{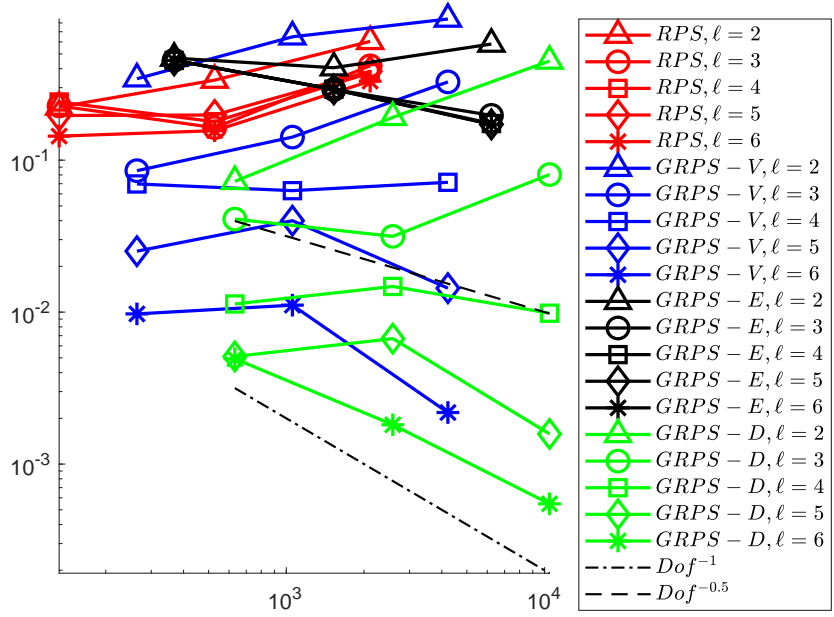} 
	\caption{Convergence curves(error in vs. dof) for SPE10's 63 layer, with fixed localization levels $\ell=2,\,3,\,4,\,5,\,6$, respectively. The x-axis stands for the degrees of freedom in the $\log_{10}$-scale and the y-axis stands for the error relative error $\|u_h-u_H^{\ell}\|_{H^1_0(\Omega)}/\|u_h\|_{H^1_0(\Omega)}$ in the $\log_{10}$-scale. }
	\label{fig:comp_phi_2}
\end{figure}

\subsection{Wave equation in heterogeneous media}

Now we investigate the convergence property of GRPS bases for the wave equation in stationary heterogeneous media,
\begin{equation}
\begin{cases}
&u_{tt}-\nabla \cdot \kappa(x) \nabla u =f(x,t),  x\in \Omega \\
& u(x,0)=u_0(x)\, \, \text{ on } \Omega, \\
& u_t(x,0)=v_0(x)\,\,\text{ on } \Omega. \\
& u(x,t)=0, \,\, \text{ on } \partial \Omega \times[0,T].
\end{cases}
\label{eqn:wave}
\end{equation}
where $\kappa(x)$ is taken as the \textit{mstrig} coefficient in \eqref{eqn:benchmark1}, $\Omega=[0,1]\times [0,1]$, $u(x,t)=0$, $u_t(x,0)=\sin(2\pi x_1)\sin(2\pi x_2)$, and $f(x,t)=0$. 

We employ the temporal discretization in \cite{OZ11,owhadi2008homogenization}. To be more precise, let $M \in \mathbb{N}$, $\Delta t = T/M$, $\left(t_{n}=n \Delta t\right)_{0 \leqslant n \leqslant M}$ be a discretization of $[0, T]$. We write the trial space $\Psi_{T}^{\ell}$ as
\begin{align*}
\Psi_{T}^{\ell} := &\{ w \in L^2(0,T; H^1_0(\Omega))| w(x, t)=\sum_{i} c_{i}(t) \psi_{i}^{\ell}(x), \\
& c_{i}(t) \text{ are linear on } \left(t_{n}, t_{n+1}\right] \text{ and continuous on } [0,T] \}.
\end{align*}
We denote $u_H^{\ell} \in \Psi_{T}^{\ell}$ as the finite element solution of the following implicit weak form, such that, for any $v \in \Psi^{\ell}$, with $\Psi^\ell$ the (localized) GRPS space,  
\begin{equation}
[v, \partial_{t} u_H^{\ell}(t_{n+1})]-[v, \partial_{t} u_H^{\ell}(t_{n})]
= -\int_{t_{n}}^{t_{n+1}} a(v, u_H^{\ell}) \dt +\int_{t_{n}}^{t_{n+1}}v f \dt 
\label{eqn:wave_weak_form}
\end{equation}
In \eqref{eqn:wave_weak_form}, $\partial_{t}  u_H^{\ell}(t) \text { stands for } \lim _{\epsilon_{\downarrow} 0}\left( u_H^{\ell}(t)- u_H^{\ell}(t-\epsilon)\right) / \epsilon$. Once we know the values of $u_H^{\ell}$ and $\partial_{t} u_H^{\ell}$ at $t_n$, \eqref{eqn:wave_weak_form} is a linear system for the unknown coefficients of $\partial_{t} u_H^{\ell}(t_{n+1})$ in $\Psi^{\ell}$.  By continuity of $u_H^{\ell}$ in time, we obtain $u_H^{\ell}(t_{n+1})$ by
\begin{equation}
	u_H^{\ell}(t_{n+1}) = u_H^{\ell}(t_{n}) + \partial_{t} u_H^{\ell}(t_{n+1})\Delta t.
	\label{eqn:wave_weak_form2}
\end{equation}

We use a fixed fine mesh with $h=2^{-8}$ and coarse meshes with $H=2^{-3},\,2^{-4},\,2^{-5}$. We denote $u_h(x,t)$ as the reference finite element solution obtained to the same weak formulation \eqref{eqn:wave_weak_form} and \eqref{eqn:wave_weak_form2} on the fine mesh. We take $\Delta = 1/200$ and compute the solution up to time $T = 1$. 


The approximation error is measured by
$$\|u_h-u_H^{\ell}\|_{L^2(0,T;H^1_0(\Omega))}:= (\int_0^T \|u_h(\cdot,t)-u_H^{\ell}(\cdot,t)\|^2_{H^1_0(\Omega)} \dt)^{1/2}.$$  
Figure \ref{fig:wave_1} illustrates the convergence behavior with respect to the coarse mesh resolution. The convergence is nearly linear when $\ell$ is large enough and GRPS-D achieves a stable convergences rate in less number of layers $\ell$ compared with others. 

\begin{figure}[H]
	\centering
	{\includegraphics[width=0.80\textwidth]{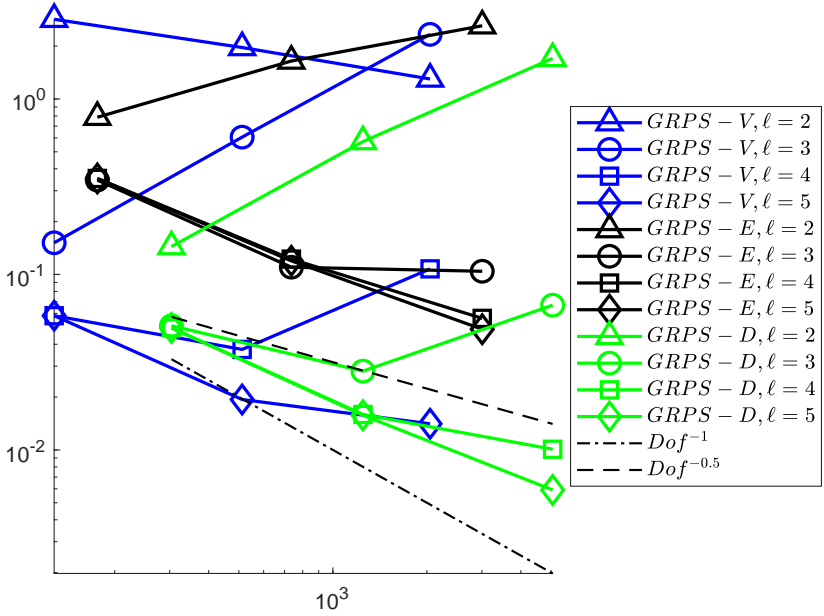}}\quad
    \caption{Convergence curves of wave equation with GRPS bases with fixed localization levels $\ell==2,\,3,\,4,\,5,\,6$, respectively. The x-axis stands for the coarse degrees of freedom in the $\log_{10}$-scale and the y-axis stands for the relative error $\|u_h-u_H^{\ell}\|_{L^2(0,T;H^1_0(\Omega))}/\|u_h\|_{L^2(0,T;H^1_0(\Omega))}$ in the $\log_{10}$-scale. }
    \label{fig:wave_1}
\end{figure}




\section{Conclusion}
\label{sec:conclusion}

In this paper, we generalize the RPS and Gamblet bases for numerical homogenization within the Bayesian framework. We propose to use the edge and first order derivative measurements to construct new generalized rough polyharmonic splines (GRPS) basis. Such a generalization requires some new techniques to prove the localization and convergence properties. Theoretical results on these new GRPS bases are developed and numerical justifications are provided. It seems that those bases are efficient for certain multiscale PDEs.

In this paper, we only consider the case of the second order elliptic operator, it is important to note that the framework works for general integro-differential operators \cite{owhadi2019operator}. For example, we can apply the method to heterogeneous elastic-plastic dynamics \cite{zhang2010global}, and furthermore, to nonlinear multiscale equations \cite{liu2021iterated}. 

\section*{Acknowledgement}

This work is partially supported by the National Natural Science Foundation of China (NSFC 11871339, 11861131004). Dr Zhu's research is further supported by Foundation of LCP (No.6142A05180501), BNU-HKBU United International College(UIC) Start-up Research Fund (No.R72021114) and NSFC (No.11771002, 11571047, 11671049, 11671051, 6162003, and 11871339). 

\bibliographystyle{spmpsci}      

\bibliography{nh}
\appendix
\section{Appendix}
\subsection{Proof of Proposition \ref{prop:span_eq}} \label{sec:prop:span_eq}
\begin{proof}
Remark \ref{rem:deriv} implies that $ \operatorname{span}\{\phi_{\tau,\alpha}\}_{\tau\in \TH, \alpha \in \mathcal{A}} \subset  \operatorname{span}\{\phi_{e}\}_{e\in \EH}$. For the opposite direction, without loss of generality, we start with an element $\tau_1$, with one edge on the boundary and two edges in the interior of the domain, as illustrated in Figure \ref{fig:illu1}. Two derivative measurement functions over $\tau_1$ are linearly independent, and they span the same space as two edge measurement functions on the interior edges of $\tau_1$, due to the Dirichlet boundary condition and \eqref{eqn:deriv}.

We can continue the argument by adding neighboring elements (sharing common edges), for example, $\tau_2$ or $\tau_3$ in \ref{fig:illu1}, until we cover the whole domain $\Omega$. Each time we add an element, by \eqref{eqn:deriv}, the newly added edge measurement functions in the element can be linear represented by derivative measurement functions in the new element and the edge measurement function on the shared edge. The latter can again be linearly represented by derivative measurement functions in the existing elements. Therefore, we have  $ \operatorname{span}\{\phi_{\tau,\alpha}\}_{\tau\in \TH, \alpha \in \mathcal{A}} \supset  \operatorname{span}\{\phi_{e}\}_{e\in \EH}$. 

We note that the number of independent edge measurement functions may increase by two ($\tau_2$ in Fig. \ref{fig:illu1}), or one ($\tau_5$ in Fig.\ref{fig:illu2}), or even zero ($\tau_4$ in Fig.\ref{fig:illu2}) when a neighboring element is added. This implies $\{\phi_{\tau,\alpha}\}_{\tau\in \TH, \alpha \in \mathcal{A}}$ can be linearly dependent.  

\begin{figure}[H]
		\centering
		\subfigure[Extended region\label{fig:illu1}]{\includegraphics[width=0.35\textwidth]{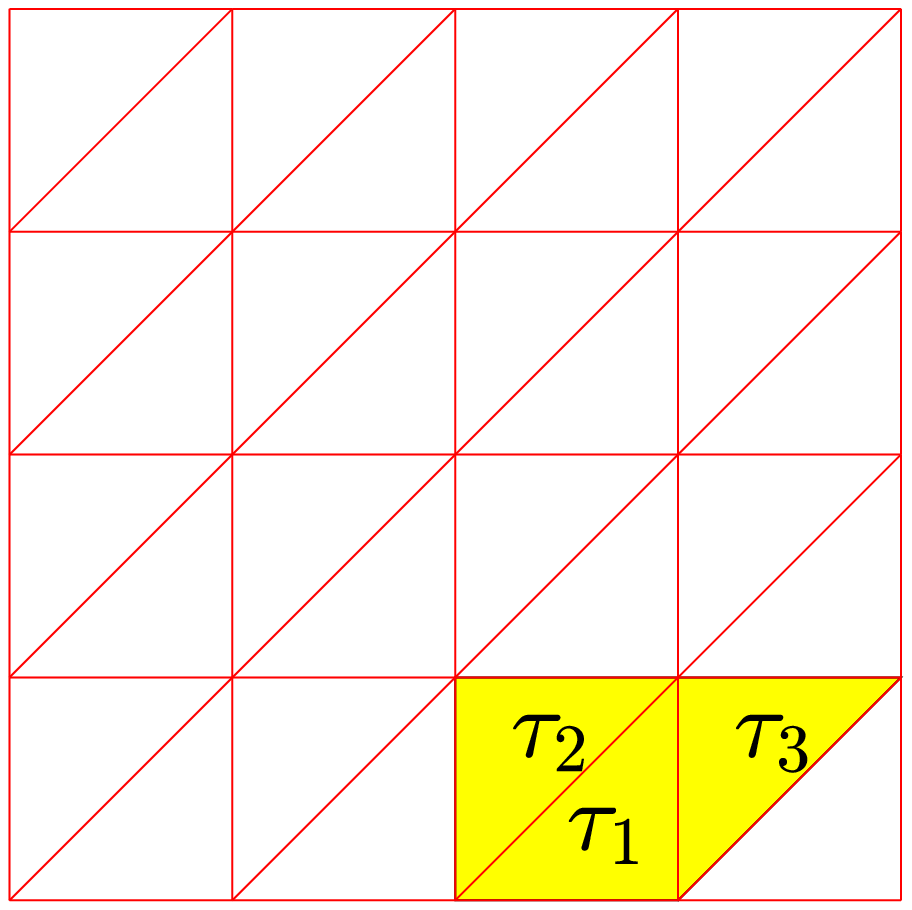}}
		\subfigure[Extended region by adding another layer \label{fig:illu2}] {\includegraphics[width=0.35\textwidth]{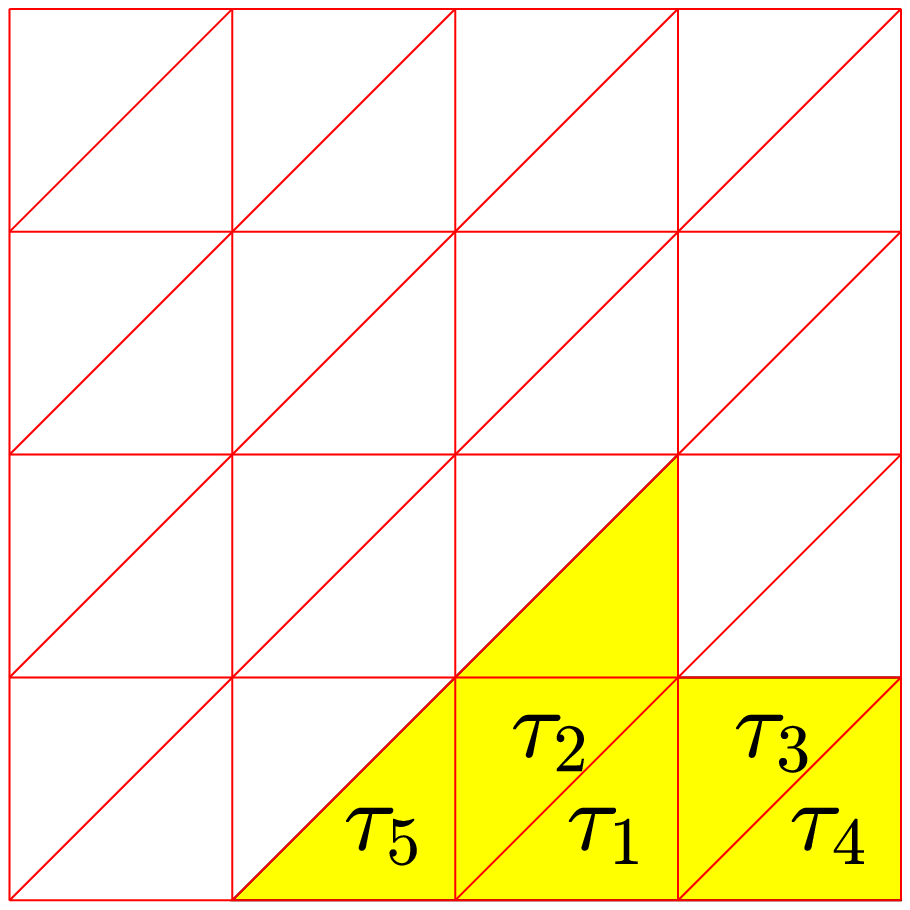}}	
		\caption{illustration of proposition \ref{prop:span_eq} }
		\label{fig:illu}
	\end{figure}
\end{proof}

\begin{lemma}	\label{lem:uniq}
	If $\lambda_{min}(P) > 0 $, then $\xi_i:=\psi_i^0-\psi_i$ is the unique solution in $\Phi^{\perp}$ such that 
	\begin{equation}
	P\xi_i = P\psi_i^0  ,
	\label{eqn:solvingchi}
	\end{equation}
	where $\psi_i^0$ is defined in \eqref{eqn:psi_i^0}.
\begin{proof}
	The definition of $\psi_i^0$ implies that $\xi_i \in \Phi^{\perp} $. Since $\psi_i$ is the unique minimizer of \eqref{eqn:psi},  we obtain that $\psi_i$ is a-orthogonal to subspace $\Phi^{\perp}$, i.e. $P\psi_i=0$, hence $P\xi_i = P\psi_i^0$.  The uniqueness is implied by $\lambda_{min}(P) >0$. 
\end{proof}
\end{lemma}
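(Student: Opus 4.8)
The plan is to verify that $\xi_i$ is a solution of the equation and then to upgrade this to uniqueness using the positive definiteness of $P$. First I would check the membership $\xi_i \in \Phi^{\perp}$. Both the global basis $\psi_i$ from \eqref{eqn:psi} and the zeroth-layer localized basis $\psi_i^0$ from \eqref{eqn:psi_i^0} are built to satisfy the same measurement constraints $[v,\phi_j]=\delta_{i,j}$ for all $j\in\I$, so their difference $\xi_i=\psi_i^0-\psi_i$ satisfies $[\xi_i,\phi_j]=0$ for every $j$, placing $\xi_i$ in $\Phi^{\perp}$.

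The crux is to establish $P\psi_i=0$. By the variational property in Proposition \ref{prp:var}, the global basis $\psi_i$ is $a$-orthogonal to $\Phi^{\perp}$, that is $a(\psi_i,w)=0$ for every $w\in\Phi^{\perp}$. Fixing any $\hat{\dotlessi}\in\Ih$ and using $\Phi_{\hat{\dotlessi}}^{\perp}\subset\Phi^{\perp}$, the defining identity of the local projection gives $a(P_{\hat{\dotlessi}}\psi_i,w)=a(\psi_i,w)=0$ for all $w\in\Phi_{\hat{\dotlessi}}^{\perp}$. Choosing $w=P_{\hat{\dotlessi}}\psi_i$ and invoking coercivity of $a$ forces $P_{\hat{\dotlessi}}\psi_i=0$. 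Summing over $\hat{\dotlessi}\in\Ih$ yields $P\psi_i=0$, and since $P$ is linear I obtain $P\xi_i=P\psi_i^0-P\psi_i=P\psi_i^0$, so $\xi_i$ solves \eqref{eqn:solvingchi}.

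For uniqueness I would invoke the hypothesis $\lambda_{min}(P)>0$: this makes the symmetric operator $P$ positive definite, hence injective, on $\Phi^{\perp}$. Consequently the linear equation $P\xi=P\psi_i^0$ has at most one solution in $\Phi^{\perp}$, and combined with the existence just exhibited, $\xi_i$ is that unique solution.

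I expect the only nontrivial point to be the passage from the global $a$-orthogonality of $\psi_i$ to the vanishing of each local projection $P_{\hat{\dotlessi}}\psi_i$; once the inclusion $\Phi_{\hat{\dotlessi}}^{\perp}\subset\Phi^{\perp}$ is used together with coercivity, the remainder is routine bookkeeping with the constraints and the linearity of $P$.
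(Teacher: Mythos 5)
Your proposal is correct and follows essentially the same route as the paper's proof: membership of $\xi_i$ in $\Phi^{\perp}$ from the shared constraints, $P\psi_i=0$ from the $a$-orthogonality of $\psi_i$ to $\Phi^{\perp}$ (Proposition \ref{prp:var}), and uniqueness from $\lambda_{min}(P)>0$. The only difference is that you spell out the step the paper states without detail --- deducing $P_{\hat{\dotlessi}}\psi_i=0$ for each $\hat{\dotlessi}$ from the inclusion $\Phi_{\hat{\dotlessi}}^{\perp}\subset\Phi^{\perp}$ and coercivity --- which is exactly the intended justification.
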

\subsection{Proof of Lemma \ref{prop:iteration}} \label{Sec:lemma subspace decomposition}
\begin{proof}
	In view of Lemma \ref{lem:uniq}, we consider the approximation error  $\|\xi_{i,{\ell}}-\xi_i\|$, instead of analyzing $\|\psi_i^{\ell}-\psi_i\|$ directly. Let $\psi_{i,0}=\psi_i^0$,  $\xi_{i,0}=0$,  and $\xi_{i,{\ell}}$ is constructed by induction
	\begin{equation}
	\xi_{i,{\ell}+1} = \xi_{i,{\ell}} + \beta P(\psi_{i,0}-\xi_{i,{\ell}}), \label{interation}
	\end{equation}
	where $\beta$ is a parameter to be identified later. Recalling that applying $P$ expands the support by one layer, we have $\xi_{i,{\ell}} \in \Phi^{\perp}\cap H^1_0(\Omega_i^{\ell})$. The iteration scheme (\ref{interation}) can be viewed as a Richardson iteration method to solve $\xi_i$ from \eqref{eqn:solvingchi}, with the iteration matrix $I-\beta P$. Taking $\beta=2/(\lambda_{max}(P)+\lambda_{min}(P))$, we deduce  that $\|I-\beta P\| \leq \big(\frac{\mathrm{cond}(P)-1}{\mathrm{cond}(P)+1}\big) $, therefore  it holds true that
	\begin{equation}
	\|\xi_{i,{\ell}}-\xi_i\|\leq \big(\frac{\mathrm{cond}(P)-1}{\mathrm{cond}(P)+1}\big)^{\ell} \|\xi_i\| .
	\end{equation}
	Let $\psi_{i,{\ell}}:=\psi_{i,0}-\xi_{i,{\ell}}$, we have
	\begin{equation}
	\|\psi_{i,{\ell}}-\psi_i\| = \|\xi_i-\xi_{i,{\ell}}\| \leq  \big(\frac{\mathrm{cond}(P)-1}{\mathrm{cond}(P)+1}\big)^{\ell} \|\xi_i\| . 
	\label{eqn:approximation}
	\end{equation}
	Since $\psi_{i,{\ell}}-\psi_i^{\ell} \in \Phi^{\perp}\cap H^1_0(\Omega_i^{\ell}) $, the variational property of $\psi_i$ and $\psi_i^{\ell}$ implies that $a(\psi_{i,{\ell}}-\psi_i^{\ell}, \psi_i^{\ell}-\psi_i)=0$, therefore,
	\begin{equation}
	\begin{aligned}
	\|\psi_i^{\ell}-\psi_i\|  &\leq  \|\psi_{i,{\ell}}-\psi_i^{\ell}\|+\|\psi_i^{\ell}-\psi_i\| = \|\psi_{i,{\ell}}-\psi_i\|\\
	& \leq \big(\frac{\mathrm{cond}(P)-1}{\mathrm{cond}(P)+1}\big)^{\ell} \|\xi_i\| .
	\label{eqn:approx}
	\end{aligned}
	\end{equation} 
	The a-orthogonality of $\psi_i$ to $\Phi^{\perp}$ also implies,
	\begin{equation}
	\|\psi_i^0\|^2 = \|\psi_i\|^2 +\|\xi_i\|^2 ,\ \|\xi_i\| \leq \|\psi_i^0\|.
	\label{eqn:psi_i^0approx}
	\end{equation}

	With \eqref{eqn:approx} and \eqref{eqn:psi_i^0approx}, we finally draw the conclusion that 
	
	\begin{equation}
	\|\psi_i^{\ell}-\psi_i\|\leq \big(\frac{\mathrm{cond}(P)-1}{\mathrm{cond}(P)+1}\big)^{\ell} \|\psi_i^0\| .
	\end{equation} 
\end{proof}

\subsection{Proof of Lemma \ref{lemma:predecomposition} } \label{Sec:lemma:predecomposition}
\begin{proof}
	Combining the gradient estimate for $\eta_{\hat{\dotlessi}}$
	\begin{equation}
	|D\eta_{\hat{\dotlessi}}|_{L^{\infty}} \leq C_{\gamma}H^{-1},
	\label{eqn:partion_stability}
	\end{equation}
    the $H^1$ seminorm estimate for $v_{\hat{\dotlessi}}$
	\begin{equation*}
	|v_{\hat{\dotlessi}}|_1^2\leq  |D\eta_{\hat{\dotlessi}}|_{L^{\infty}} \|\chi\|_{L^2(\omega_{\hat{\dotlessi}})}^2+\|\nabla\chi\|_{L^2(\omega_{\hat{\dotlessi}})}^2,
	\end{equation*}
	and the Poincar\'{e}'s inequality, we have 
	\begin{equation}
	\|v_{\hat{\dotlessi}}\|_{H_0^1(\omega_{\hat{\dotlessi}})} ^2\leq C_{\gamma}(H^{-1} \|\chi\|_{L^2(\omega_{\hat{\dotlessi}})}^2+\|\nabla\chi\|_{L^2(\omega_{\hat{\dotlessi}})}^2 ).
	\label{eqn:stability}
	\end{equation}
	
	Recalling the definition of the overlapping number $n_{max}$ in Lemma \ref{lemma:cond(P)}, it follows that 
	\begin{equation}
	\sum_{\hat{\dotlessi}\in \Ih}\|\chi\|_{H^1(\omega_{\hat{\dotlessi}})}^2 \leq  n_{max}\|\chi\|_{H^1(\Omega)}^2,\, \text{ and } \, \sum_{\hat{\dotlessi}\in \Ih}\|\chi\|_{L^2(\omega_{\hat{\dotlessi}})}^2 \leq  n_{max}\|\chi\|_{L^2(\Omega)}^2. \label{eqn:overlap}
	\end{equation}
	Combining  \eqref{eqn:stability}, \eqref{eqn:overlap} and Poincar\'{e}'s inequality \eqref{eqn:poincare}, we have 
	\begin{equation}
	\sum_{\hat{\dotlessi}\in \Ih} \|v_{\hat{\dotlessi}}\|_{H_0^1(\omega_{\hat{\dotlessi}})} ^2\leq C  \|\chi\|_{H_0^1(\Omega)}^2 ,
	\end{equation}
	which further implies the following inequality due to the equivalence of $\|\cdot\|$ and $\|\cdot\|_{H_0^1(\Omega)}$, 
	\begin{equation}
	\sum_{\hat{\dotlessi}\in \Ih} \|v_{\hat{\dotlessi}}\| ^2\leq C \kappa_{max}\kappa_{min}^{-1} \|\chi\|^2 ,
	\end{equation}
	where $C$ only depends on $\gamma$, $d$.
	\end{proof}

\subsection{Proof of Lemma \ref{lemma:stability}} \label{sec:lemma:stability}
\begin{proof}
	For three cases, it suffices to show 
	\begin{equation}
	 \|\tilde{P}v_{\hat{\dotlessi}}\| \leq C \|v_{\hat{\dotlessi}}\|,
	 \label{eqn:stable}
	\end{equation} where $C$ only depends on $d$ and $\gamma$.
	  
	{Case V:}
	By Poincar\'{e} inequality, 
	$$
	[\phi_i, v_{\hat{\dotlessi}}]\leq \|\phi_i\|_{L^2(\omega_{\hat{\dotlessi}})}\|v_{\hat{\dotlessi}}\|_{L^2(\omega_{\hat{\dotlessi}})}\leq CH\|v_{\hat{\dotlessi}}\|_{H_0^1(\omega_{\hat{\dotlessi}})} ,
	$$ 
	where $C$ only depends on  $\gamma$ and $d$. Again, by the shape regularity, $\max_{\hat{\dotlessi}}\#\{\tau_i|\ \tau_i\subset\omega_{\hat{\dotlessi}}\}$ is bounded by a constant independent of $H$. By Lemma \ref{lemma:psi_i^0} we have $\|\psi^0_i\| \leq C H^{-1}$, where $\psi^0_i$ corresponds to $\tau_i\in \TH$, for $i\in\I$. Therefore 
	\begin{equation*}
	\|\tilde{P}v_{\hat{\dotlessi}}\| 
	\leq a_{max}\|\sum_{\tau_i \subset \omega_{\hat{\dotlessi}}} \psi^0_i [\phi_i, v_{\hat{\dotlessi}}] \|_{H_0^1(\omega_{\hat{\dotlessi}})} 
	\leq a_{max}\sum_{\tau_i \subset \omega_{\hat{\dotlessi}}} \|\psi^0_i [\phi_i, v_{\hat{\dotlessi}}] \|_{H_0^1(\omega_{\hat{\dotlessi}})} 
	\leq  C\|v_{\hat{\dotlessi}}\|_{H_0^1(\omega_{\hat{\dotlessi}})} .
	\label{eqn:InterpStability1}
	\end{equation*}
	{Case E:}
	For any $v\in H_0^1(\omega_{\hat{\dotlessi}}) \text{ and any } e_i \subset \omega_{\hat{\dotlessi}}$, by Lemma \ref{lem:trace}, we have $\|v\|_{L^2(e_i)} \leq C H^{1/2}\|v\|_{H_0^1(\omega_{\hat{\dotlessi}})}$.  
	Therefore, 
	$$
	[\phi_i, v_{\hat{\dotlessi}}] = \int_{e_i}|e_i|^{\frac{2-d}{2(d-1)}} v_{\hat{\dotlessi}} \ds  
	\leq |e_i|^{\frac{1}{2(d-1)}} \|v_{\hat{\dotlessi}}\|_{L^2(e_i)}
	\leq C H\|v_{\hat{\dotlessi}}\|_{H_0^1(\omega_{\hat{\dotlessi}})}, 
	$$
	where $C$ only depends on $\gamma$ and $d$.  Again, by the shape regularity, $\max\limits_{\hat{\dotlessi}\in \Ih}\#\{e_i|\ \omega_{e_i}\subset\omega_{\hat{\dotlessi}}\}$ is bounded by a constant depending on $\gamma$. Therefore, by Lemma \ref{lemma:psi_i^0} we have 
	\begin{equation*}
	\begin{aligned}
	\|\tilde{P}v_{\hat{\dotlessi}}\| 
	\leq a_{max}\|\sum_{\omega_{e_i} \subset \omega_{\hat{\dotlessi}}}  [\phi_i, v_{\hat{\dotlessi}}]\psi^0_i \|_{H_0^1(\omega_{\hat{\dotlessi}})} 
	&\leq a_{max}\sum_{\omega_{e_i} \subset \omega_{\hat{\dotlessi}}} \|[\phi_i, v_{\hat{\dotlessi}}]\psi^0_i  \|_{H_0^1(\omega_{\hat{\dotlessi}})} 
	&\leq  C\|v_{\hat{\dotlessi}}\|_{H_0^1(\omega_{\hat{\dotlessi}})} .
	\end{aligned}
	\label{eqn:InterpStability2}
	\end{equation*}
	{Case D:}
	We obtain $\|\tilde{P}v_{\hat{\dotlessi}}\| \leq  C\|v_{\hat{\dotlessi}}\|_{H_0^1(\omega_{\hat{\dotlessi}})} $ by combining the results in case V and case E. 
	
	It follows from \eqref{eqn:stable} and Lemma \ref{lemma:predecomposition} that 
	\begin{equation}
	\sum_{\hat{\dotlessi}\in \Ih}\|\tilde{P}v_{\hat{\dotlessi}}\|^2 \leq C  \|\chi\|^2.
	\label{eqn:InterpStable}
	\end{equation}
	
	Lemma \ref{lemma:predecomposition} and \eqref{eqn:InterpStable}  indicate that 
	\begin{equation}
	\sum_{\hat{\dotlessi}\in \Ih}\|\chi_{\hat{\dotlessi}}\|^2\leq 2C \|\chi\|^2 ,
	\end{equation}
	where C depends on $ \gamma,  \kappa_{min}, \kappa_{max}$ but not on $H$. 
	\end{proof}
    
\subsection{Proof of Lemma \ref{lemma:psi_i^0}}\label{sec:lemma:psi_i^0} 
\begin{proof}
	Case V can be referred to Lemma 15.24 in \cite{owhadi2019operator}. 
	
	For case E, we can use the scaling argument. Consider $\Omega_{ref}:=[0,1]^2$ consisting of two reference triangles $T_{ref_{1}}$  and $T_{ref_{2}}$ with a common edge $E_1$, as illustrated in Figure \ref{fig:Tref}.
    \begin{figure}[H]
		\centering
		\includegraphics[width=0.4 \textwidth]{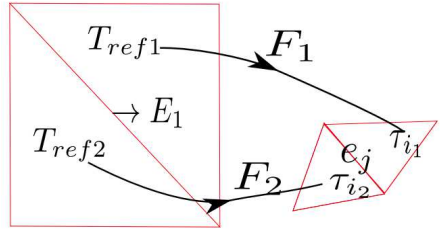}
		\caption{Illustration of reference triangle}
		\label{fig:Tref}
	\end{figure}
    Let	
    \begin{equation}
	\hat{\psi}_{\Omega_{ref}}:=\argmin \limits_{v\in H_0^1(\Omega_{ref}),\int_{E_{1}} \phi_{E_1} v ds=1}\|v\|_{H_0^1(\Omega_{ref})},
	\end{equation}	
	where $\phi_{E_1}$ is the corresponding edge measurement on $E_1$. We have $\|\hat{\psi}_{\Omega_{ref}}\| \leq C$, where $C$ only depends on $\kappa_{max}$ and $d$. We denote the affine mapping $F_k: T_{ref_k} \rightarrow \tau_{i_k}$,  $F_k \hat{x}= B_k \hat{x}+b_k$, $k = 1, 2$, where $F_1(E_1)=F_2(E_1)=e_{j}$. By the shape regularity of $\TH$, we have $B_1$, $B_2$ nonsingular, and $\|B_1\|,\|B_2\| \sim \mathcal{O}(H)$. 
	
	We define $\psi_{i_1}\in H^1(\tau_{i_1})$ by $\psi_{i_1}(x):=\sqrt{2}|e_j|^{-\frac{d}{2(d-1)}}\hat{\psi}(F_1^{-1}{x}),\forall x\in \tau_{i_1}$. By the definition of edge measurement $\phi_{e_j}$ in \eqref{eqn:edge}, we have
	$
	[\psi_{i_1},\phi_{e_j}] = 1,
	$
	and
	\begin{equation}
	|\psi_{i_1}|_1 \leq \|B_1^{-1}\| |e_j|^{-\frac{d}{2(d-1)}} |\operatorname{det}(B_1)|^{1/2}|\hat{\psi}_{\Omega_{ref}}|_1 \leq  C H^{-1},
	\end{equation}
	where $C$ only depends on the shape regularity parameter $\gamma$, $\kappa_{max}$ and $d$. Similarly, we can obtain $\psi_{i_2} $ on $T_{ref_2}$ and patch them together to form a function $\psi_{i_1,i_2}\in H^1_0(\omega_{e_j}) $, which satisfies $\|\psi_{i_1,i_2}\| \leq C H^{-1}$ and  $[\psi_{i_1,i_2},\phi_{e_j}] = 1$. Since $\psi^0_{j}$ is the minimizer in $H^1_0(\omega_{e_j})$, we have $\|\psi^0_{j}\| \leq \|\psi_{i_1,i_2}\| \leq C_1 H^{-1}$, where $C_1$ only depends on $\gamma$, $\kappa_{max}$ and $d$. 
	
	Case D can be proved similarly.
\end{proof}

\end{document}